\theoremstyle{plain}
\newtheorem{thrm}{Theorem}[section]
\newtheorem{lmm}[thrm]{Lemma}
\newtheorem{crllr}[thrm]{Corollary}
\newtheorem{crtrn}[thrm]{Criterion}
\theoremstyle{definition}
\newtheorem{dfntn}[thrm]{Definition}
\newtheorem{cnjctr}[thrm]{Conjecture}
\newtheorem{xmpl}[thrm]{Example}
\newtheorem{prblm}[thrm]{Problem}
            \centering\fontsize{11}{13}
    \def\xCzero{{\rm C}^{0}}    
    \def\xCone{{\rm C}^{1}} 
    \def\xR{\mathbb{R}}
    \def\xCn#1{{\rm C}^#1}
    \def\@Rref#1{\hbox{\rm \ref{#1}}}
    \def\Rref#1{\@Rref{#1}}
    \def\xGL{\mathop{\rm GL\,}\nolimits}
    \renewcommand{\dot}[1]
    {
    \overset{\raisebox{0ex}{\scalebox{0.45}{$\bullet$}}}{#1}
    }
    \newcommand{\restr}[2]
    {
    \left.\kern-\nulldelimiterspace
    #1 
    \vphantom{\big|}
    \right|_{#2}
    }
    \newcommand\fnsep{\textsuperscript{,}}
\begin{document}
    \title
    {
        \textsc
        {
            Feedback stabilization of nonlinear control systems by composition operators
        }
    }
    \author
    {
        {
        \renewcommand{\thefootnote}{\arabic{footnote}}
        {\small\scshape BRYCE A. CHRISTOPHERSON\footnotemark[1] , \enskip BORIS S. MORDUKHOVICH\footnotemark[2]\renewcommand{\thefootnote}{\fnsymbol{footnote}} \fnsep \footnotemark[2] \renewcommand{\thefootnote}{\fnsymbol{footnote}} \renewcommand{\thefootnote}{\arabic{footnote}}\enskip, \enskip FARHAD JAFARI\footnotemark[3] }
        }
    }
    \footnotetext[1]
    {
        University of Wyoming, Dept. of Mathematics \& Statistics, Laramie, WY, USA; \href{mailto:BChris19@uwyo.edu}{BChris19@uwyo.edu}
    }
    \footnotetext[2]
    {
        Wayne State University, Dept. of Mathematics, Detroit, MI, USA; \href{mailto:Boris@math.wayne.edu}{Boris@math.wayne.edu}
    }
    \renewcommand{\thefootnote}{\fnsymbol{footnote}}
    \footnotetext[2]
    {
        Research of Boris S. Mordukhovich was partly supported by the USA National Science Foundation under grants DMS-1512846 and DMS-1808978, by the USA Air Force Office of Scientific Research under grant \#15RT04, and by the Australian Research Council under Discovery Project DP-190100555.
    }
    \renewcommand{\thefootnote}{\arabic{footnote}}
    \footnotetext[3]
    {
        Emeritus Professor of Mathematics, University of Wyoming, Dept. of Mathematics \& Statistics, Laramie, WY, USA and Professor of Radiology, University of Minnesota, Minneapolis, MN, USA; \href{mailto:FJafari@uwyo.edu}{FJafari@uwyo.edu}
    }
    \date{}
    \maketitle 
        {\scshape\bfseries\noindent Abstract.} Feedback asymptotic stabilization of control systems is an important topic of control theory and applications. Broadly speaking, if the system $\dot{x} = f(x,u)$ is locally asymptotically stabilizable, then there exists a feedback control $u(x)$ ensuring the convergence to an equilibrium for any trajectory starting from a point sufficiently close to the equilibrium state. In this paper, we develop a reasonably natural and general composition operator approach to stabilizability. To begin with, we provide an extension of the classical Hautus lemma to the generalized context of composition operators and show that Brockett's theorem is still necessary for local asymptotic stabilizability in this generalized framework. Further, we employ a powerful version of the implicit function theorem--as given by Jittorntrum and Kumagai--to cover stabilization without differentiability requirements in this expanded context.  Employing the obtained characterizations, we establish relationships between stabilizability in the conventional sense and in the generalized composition operator sense.  This connection allows us to show that the \textit{stabilizability} of a control system is equivalent to the \textit{stability} of an associated system.  That is, we reduce the question of \textit{stabilizability} to that of \textit{stability}.
                
        \bigskip
        
    \noindent\textbf{Keywords}: nonlinear control systems, feedback stabilization, composition operators, asymptotic stabilizability, implicit function theorems

    \noindent\textbf{Mathematics Subject Classification (2010)}: 93D15, 93D20,  93C10, 49J53
\section{Introduction}\label{intro}
    In this paper, we consider autonomous control systems of the form
    \begin{align}
    \dot{x}=f(x,u), & \enskip t \geq 0. \label{control sys}
    \end{align}
    \noindent More specifically, we take a neighborhood of the origin $\mathcal{X} \times \mathcal{U} \subseteq \xR^n \times \xR^m$ and, unless otherwise stated, we will always assume that our function $f$ on the right-hand side of \eqref{control sys} satisfies both conditions $f(0,0)=0$ and $f \in \xCzero\left (\mathcal{X} \times \mathcal{U}, \xR^n\right )$, where $\xCn{k}(\mathcal{X}\times\mathcal{U},\xR^n)$ denotes the $n$-times continuously differentiable functions $\mathcal{X}\times \mathcal{U} \rightarrow \xR^n$ (adopting as well the convention that taking $k=-1$ denotes the set of all functions $\mathcal{X}\times \mathcal{U} \rightarrow \xR^n$ without any assumption of continuity).
    
    Mostly, we are interested in {\it stabilizing} these systems. That is (as in, e.g., \cite[Definition 10.11]{coron1}), we are looking at the following property.
    \begin{dfntn}[Local Asymptotic Stabilizability - Feedback Laws]\label{local asymptotic stabilizability definition}
    Given a control system of type \eqref{control sys}, we say the system is \textit{locally asymptotically stabilizable by means of feedback laws} if there exists a neighborhood of the origin $\mathcal{O}\subseteq \mathcal{X}$ and a feedback $u  \colon  \mathcal{O} \rightarrow \mathcal{U}$ such that the origin is a locally asymptotically stable equilibrium \cite[Definition 2.1]{byrnes} of the closed-loop system $\dot{x}=f\big (x,u(x) \big )$.
    \end{dfntn}
    For some commonly-used additional jargon, if the stability of the system in Definition~\Rref{local asymptotic stabilizability definition} can be made exponential by such a feedback control, we say that the system is \textit{locally exponentially stabilizable by means of feedback laws}.  If, in addition, $u(0)=0$, we say that \eqref{control sys} is locally asymptotically (resp. exponentially) stabilizable by means of \textit{stationary} feedback laws.
    
    For brevity, when we reference local asymptotic stability throughout the paper, as defined in Definition~\Rref{local asymptotic stabilizability definition}, we will \textit{always} assume that the feedback $u(x)$ satisfies the following additional criterion:
    \begin{crtrn}[Uniqueness Criterion - Feedback Stabilization]\label{uniqueness criterion}
    The state-feedback controller $u(x)$ is such that $f\big(x,u(x)\big)$ is continuous and $\dot{x}=f\big (x,u(x) \big )$ has a unique solution $x(t)$ for all $t \geq 0$ and all $x_0$ in a neighborhood of the origin.
    \end{crtrn}
    There is some variation in the literature as to whether Criterion~\Rref{uniqueness criterion} ought to be included as part of Definition~\Rref{local asymptotic stabilizability definition} as, e.g., Coron \cite{coron2} and Zabczyk \cite{zab} do.  There seems to be no consensus here:  On one hand, a number of results may be stated more cleanly when adopting this additional restriction.  On the other, this assumption is somewhat unattractive in its restrictiveness, since it is also possible to have non-unique trajectories which still converge in the desired fashion. We will take extra care regarding this point.  In particular, when we reference known results in Section~\Rref{preliminaries section} while discussing some background materials, we will note which ones would be false without this imposition.  Additionally, it is important to note that Criterion~\Rref{uniqueness criterion} \textit{does not} require that $u(x)$ must be continuous.   That is, it is possible for the composition of a continuous function and a discontinuous function (or even two discontinuous functions) to be continuous.  For example, let $g(x)=x^2$ and let $D:\xR \rightarrow \xR$ be such that $D(x)=1$ if $x$ is rational and $D(x)=-1$ if $x$ is irrational.  Then, while $D$ is discontinuous everywhere, both $g \circ D$ and $D \circ D$ are continuous everywhere.
    
    A variety of conditions describing whether the system \eqref{control sys} can be locally asymptotically stabilized by means of continuous feedback laws have been derived; see, e.g., \cite{artstein,brockett,byrnes,christopherson2019,coron2,coron1,gjkm,hermes,sontag1,sontag2,jammazi,onishchenko1, onishchenko2}.  However, fully characterizing whether or not an arbitrary system has this property has proven to be a formidable task. At present, we are not familiar with any non-Lyapunov-theoretic conditions, which are simultaneously necessary and sufficient, that allow us to determine the existence or nonexistence of such a controller.
\subsection{Entering the New Approach} \label{entering the new approach subsection}
    Stabilization via feedback has been recognized as a difficult topic in control theory. One reason for why it might be is that the topic itself could be difficult.  That is, information regarding stabilization could, in a colloquial sense, be `buried' too deep or `tangled up' too well in the structure of a system to be extracted with ease.  Another option is that we've just been phrasing the question in an unfortunate manner and have made things \textit{seem} much more difficult than necessary. One way it might happen, as argued here, is that we opted to ask one particularly messy question instead of asking two much simpler questions that, when combined, produce the original messy one.  
            
    More particularly, when we apply a feedback law to a control system of form \eqref{control sys}, all we're doing is sending $f(x,u)$ to $f(x,u(x))$. At the end of the day, this amounts to applying a \textit{composition operator} $T_h$ to the vector field $f$ inducing the dynamics of the system, i.e., $T_hf=f \circ h$ for some mapping $h \colon \mathcal{X} \rightarrow \mathcal{X} \times \mathcal{U}$.  A fairly large amount is known about composition operators, so it seems like the real sticking point in the case of stability is that we require the generating mapping $h$ of our composition operator to be of a \textit{very specific form}.  Namely, $h(x)=(x,u(x))$ for some state-feedback control $u(x)$.  
    
    Of course, relaxing this condition would change the problem entirely, and answering a different problem instead of the one originally asked doesn't constitute a solution by any means. But, all the same, suppose one \textit{did} decide to go away with the restriction that the mapping $h$ in our composition must be of the form $h(x)=(x,u(x))$. Then, if one were to solve the problem of stability with the very general class of $h(x)$, certainly every necessary condition for the broader case would also be necessary for the particular case of the original question.  Indeed, if we knew that a system was stabilizable by some composition operator $T_h$, all that would then remain is a question that is hardly about controls at all--i.e., when can we choose the stabilizing composition operator $T_h$ to have a symbol of the form $h(x)=(x,u(x))$? 
    
    Broadly speaking, this is what we will attempt to do here.  That is, our approach to stabilizability by feedback will consist of asking the following two questions: 
    \begin{prblm}\label{question 1}
    Under what conditions does there exist a composition operator $T_h$, with a stationary symbol $h$ of some desired smoothness, such that the origin of the ”closed-loop” system $\dot{x}=T_hf(x)$ is locally asymptotically stable?
    \end{prblm}
    \begin{prblm}\label{question 2}
    Given the existence of \textit{some} stabilizing composition operators $T_h$ resolving Problem~\Rref{question 1}, does there exist at least one that is a control?
    \end{prblm}
    As we will show (Theorem~\Rref{hautus for comp stab implies continuous right inverse} and Theorem~\Rref{coronconjecture}), Problem~\Rref{question 1} is answered in the affirmative when $f$ has a local section $\alpha:\xR^n \rightarrow \xR^n\times\xR^m$ of the desired smoothness near the equilibrium.  Likewise, we will show (Theorem~\Rref{stabilizable so that infinitely differentiable characterization}) that Problem~\Rref{question 2} is answered in the affirmative when the projection onto its first factor of this local section, i.e. $\textrm{proj}_1\circ\alpha:\xR^n \rightarrow \xR^n$, induces a diffeo/homeomorphism whose inverse, itself, yields a stable system $\dot{x}=(\textrm{proj}_1 \circ \alpha)^{-1}(x)$.  Moreover, the answer to Problem~\Rref{question 2} will tell us how to produce such a stabilizing control (indeed, all possible stabilizing controls) from a local section satisfying the required criteria.
    
    Let us detail the preliminary definitions in this approach more precisely before continuing.
\subsection{Composition Stabilizability}\label{composition stabilizability subsection}
    In one sense, we will somewhat {\it generalize} the notion of local asymptotic stabilizability and look at the way our generalization interacts with the conventional form of local asymptotic stabilizability. In another (ultimately, equivalent) sense, we consider the classical notion of feedback local asymptotic stabilizability and begin by \textit{restricting} ourselves to certain particularly convenient cases. This will seem to be less confusing after a bit of explanation that follows.
    
    The main idea behind the generalization offered below is to treat the application of a stabilizing control as the action of a {\it composition operator}.  That is, take $f \in \xCn{k}(\mathcal{X}\times\mathcal{U},\xR^n)$ and let's suppose that the system $\dot{x}=f(x,u)$ is locally asymptotically stabilized by the feedback law $u(x)$. According to the definition, it says that the system $\dot{x}=f\big (x,u(x) \big)$ is locally asymptotically stable around the origin. Taking $\mathcal{O} \subseteq \mathcal{X}$ to be a neighborhood of the origin and $h \in \mathrm{C}(\mathcal{O}, \mathcal{X}\times\mathcal{U})$ be the mapping $x \mapsto (x,u(x))$, we can view $f\big (x,u(x)\big)$ as $(T_hf)(x)$, where $T_h$ is the composition operator $T_hf:=f \circ h$. More precisely, we define the following property.
    \begin{dfntn}[Local Asymptotic Stabilizabilty - Composition Operators]\label{local asymptotic stabilizability definition - composition operators}
    Considering the control system \eqref{control sys} with $f \in \mathrm{C}(\mathcal{X}\times\mathcal{U},\xR^n)$ satisfying $f(0,0)=0$, we say the system is \textit{locally asymptotically stabilizable by means of a composition operator} if there exists a neighborhood of the origin $\mathcal{O}\subseteq \mathcal{X}$ and a composition operator $T_h$ with symbol $h \colon \mathcal{O} \rightarrow \mathcal{X}\times \mathcal{U}$ such that the origin is a locally asymptotically stable equilibrium of the system $\dot{x}=T_hf(x)$.
    \end{dfntn}
    If such a stabilizing composition operator $T_h$ can be chosen with a symbol $h \in \xCn{k}(\mathcal{O},\mathcal{X}\times\mathcal{U})$ as $ -1 \leq k \leq \infty$, we say that the system \eqref{control sys} is {\it locally asymptotically stabilizable by means of a composition operator with a $\xCn{k}$ symbol}. If in addition, $h(0)=(0,0)$, we say the symbol is \textit{stationary}.  Additionally, as with the case of stabilizing feedbacks, we make an analogous uniqueness assumption to that of Criterion~\Rref{uniqueness criterion}.
    \begin{crtrn}[Uniqueness Criterion - Composition Stabilization]\label{uniqueness criterion for composition operators}
    The composition operator $T_h$ is such that $T_hf$ is continuous and $\dot{x}=T_hf(x)$ has a unique solution $x(t)$ for all $t \geq 0$ and all $x_0$ in a neighborhood of the origin.
    \end{crtrn}
    Asking whether or not there exists a locally asymptotically stabilizing \textit{composition operator} certainly encompasses the question of whether or not there exists a locally asymptotically stabilizing \textit{feedback law}--all one needs to do is simply take $h(x)=\big (x,u(x) \big)$ for any such stabilizing feedback law $u(x)$.  However, this is properly a weaker property, in the sense that it allows for compositions in the {\it state space domain} as well as the control domain.  More succinctly, we are just treating \textit{everything} as a control in this generalization. To highlight it, let's look at the following straightforward example.
    \begin{xmpl}\label{composition stabilizability example}
    Let $f(x,u):=x$. Since the application of any control has no effect on the dynamics, the corresponding system \eqref{control sys} is not locally asymptotically stabilizable by means of feedback laws.  However, \eqref{control sys} {\it is} locally asymptotically stabilizable by means of a composition operator (in fact, by one with a continuous symbol). Indeed, take, e.g., $h(x):=(-x,u(x))$ for any continuous feedback law $u(x)$. Then the system
    $$
    \dot{x}=(T_hf)(x)=f(h(x))=-x
    $$
    has a locally asymptotically stable equilibrium at the origin. In fact, it is easy to see that the origin is a globally exponentially stable equilibrium.
    \end{xmpl}
    As mentioned above, it is also reasonable to consider stabilizability by composition operators as a mere subcase of stabilizability by feedback laws, since treating \textit{everything} as a control is really the same thing as considering local asymptotic stabilizability for the restrictive subclass of systems which \textit{only} depend on controls (i.e., systems of form \eqref{control sys} that depend trivially on the state variable).\vspace*{0.05in}
    
    In this paper, we will show that this natural (albeit, much weaker) generalization of stabilizability (equivalently, conventional stabilizability for the restrictive subclass of systems) has some interesting connections to the problem of stabilizability by feedback laws given in Definition~\Rref{local asymptotic stabilizability definition}. To do so, we will  build up this approach and detail how many of the classical theorems on stabilizability have much cleaner forms in the composition operator context. We will show that extracting properties about the composition operators which stabilize a given system of form \eqref{control sys} tends to be, in general, relatively easy.  Additionally, we will make a case for the position that this is the correct approach to view the problem of feedback stabilizability.  In support of this stance, we will completely resolve Problem~\Rref{question 2} and, to some extent, Problem~\Rref{question 1}.
\subsection{Main Results of the Paper}\label{results subsection}
    The contribution of this paper is to reduce the question of \textit{stabilizability} to that of \textit{stability}.  That is, we will take the broader class of control systems of the form \eqref{control sys} and reduce the question of their \textit{stabilizability} to the question of the \textit{stability} of an associated system.  This departs substantially from the existing Lyapunov-theoretic solution to the problem of stabilizability for general nonlinear systems \eqref{control sys}--wherein one must determine the existence or nonexistence of a so-called control Lyapunov function, though it is not always clear how one might go about doing this.  Instead, we present a characterization of stabilizability entirely in terms of the stability of a system derived from a local section of the vector field $f$ in \eqref{control sys}.  Further, this does not only provide a characterization of local exponential and asymptotic stabilizability by feedback laws, but also produces a characterization of all possible stabilizing feedback laws.  More interestingly perhaps, this approach provides a somewhat unexpected link between the topic of stabilization and the existence of local sections. 
    
    To allow for concise wording, we first need a definition:
    \begin{dfntn}[Sections, Local Sections]\label{global section}
    Given topological spaces $X$, $Y$ and a function $F \colon X \rightarrow Y$, a \textit{global section} of $F$ is a continuous right inverse of $F$.  That is, a mapping $\sigma \colon Y \rightarrow X$ is a section of $F$ if $\sigma$ is continuous and $F \circ \sigma = \textrm{id}_{Y}$.  A \textit{local section} of $F$ is a continuous mapping $\sigma \colon \mathcal{O}\rightarrow X$ for some open set $\mathcal{O} \subseteq Y$ such that $F \circ \sigma=\iota \circ \textrm{id}_{\mathcal{O}}$, where $\iota$ denotes the inclusion map $\iota \colon \mathcal{O} \rightarrow Y$.  If $y_0 \in Y$, we say that $F$ has a \textit{local section near $y_0$} if there exists a local section $\sigma:\mathcal{O}\rightarrow X$ of $F$ such that $\mathcal{O}$ is an open neighborhood of $y_0$.
    \end{dfntn}
    For the remainder of this subsection, we will \textit{always} assume that Criterion~\Rref{uniqueness criterion} and Criterion~\Rref{uniqueness criterion for composition operators} hold whenever we reference stabilizability by feedback laws or stabilizability by composition operators.  Our results are as follows:
    
    \medskip
    
    First, we address the case of local asymptotic stabilizability by composition operators.  We begin by addressing the case of local \textit{exponential} stabilizability of smooth systems by composition operators with smooth stationary symbols.
    
    \smallskip
    
    \noindent\textbf{Theorem~\ref{hautus for comp stab implies continuous right inverse}}
    \textit{The system \eqref{control sys} with $f \in \xCn{k}(\mathcal{X}\times\mathcal{U},\xR^n)$ for $k \geq 1$ is locally exponentially stabilizable by means of a composition operator $T_h$ with a $\xCn{k}$ stationary symbol if and only if $f$ has a $\xCn{k}$ stationary local section near the origin.}
    
    \smallskip
    
    For stabilization by composition operators with merely continuous stationary symbols, the case is more interesting.
    
    \smallskip
    
    \noindent\textbf{Theorem~\ref{coronconjecture}}
    \textit{The system \eqref{control sys} is locally asymptotically stabilizable by means of a composition operator $T_h$ with a continuous stationary symbol if and only if $f$ has a stationary local section near the origin.  Moreover, if \eqref{control sys} is locally asymptotically stabilizable by such a composition operator, then this stability can always be made exponential.}
    
    \smallskip
    
    
    This produces an interesting gap, which is not resolved here.  Namely, exponential and asymptotic local stabilization by composition operators with continuous stationary symbols coincide, while they may not in the case where $k > 0$.  In particular, for local asymptotic stabilizability by a composition operator with an $\xCn{k}$ stationary symbol to be possible when local exponential stabilizability by the same is not, this requires that all of the local sections near the origin of the vector field $f$ in \eqref{control sys} must fail to be differentiable.  However, while this is necessary, it is unclear what conditions are sufficient to provide the existence of such a stabilizing composition operator.
    
    We make a connection to Brockett's theorem as well, providing a variety of composition-operator stabilization characterizations of this well-known property.
    
    \smallskip
    
    \noindent\textbf{Theorem \ref{discontinuous composition stabilizability theorem}}
    \textit{If the vector field $f$ in \eqref{control sys} satisfies Brockett's condition, then \eqref{control sys} is locally exponentially stabilizable by means of a composition operator with a (potentially, discontinuous) stationary symbol.}
    
    \smallskip
    
    \noindent\textbf{Corollary \ref{isolated equilibrium - brockett's necessary and sufficient theorem}}
    \textit{Suppose the system \eqref{control sys} has the origin as an isolated equilibrium.  Then, \eqref{control sys} is locally asymptotically stabilizable by means of a composition operator $T_h$ with a stationary symbol which is continuous at the origin if and only if the vector field $f$ in \eqref{control sys} satisfies Brockett's condition.}
    
    \smallskip
    
    Using the tools developed for the case of stabilization by composition operators, we are able to give a necessary and sufficient condition for the existence of a stabilizing feedback laws.  Unlike the case of composition operators, this result leaves no remaining gap.
    
    \smallskip
    
    \noindent\textbf{Theorem~\ref{stabilizable so that infinitely differentiable characterization}}
    \textit{For $k \geq 0$, the system \eqref{control sys} is locally asymptotically (resp. exponentially) stabilizable by means of a $\xCn{k}$ stationary feedback law $u$ if and only if there exists a $\xCn{k}$ stationary local section $\alpha$ of $f$ near the origin such that $\alpha_1 := \textrm{proj}_1\circ \alpha$ is a homeomorphism (resp. diffeomorphism) and $\dot{x}=\alpha_1^{-1}(x)$ is locally asymptotically (resp. exponentially) stable.  Moreover, every such control $u$ is of the form $u =\textrm{proj}_2\circ \alpha \circ (\textrm{proj}_1 \circ \alpha)^{-1}$ for some such $\alpha$ satisfying the above.}
    
    \smallskip
    

\section{Preliminaries}\label{preliminaries section}
    In this section, we begin by briefly recalling some known results regarding stabilization by feedback laws and detailing a short taxonomy of the obstacles remaining between what is known and a complete characterization of local asymptotic stabilizability.
\subsection{Hautus Lemma and Related Results}\label{hautus lemma and related results subsection}
    As mentioned previously, when we reference local asymptotic stability throughout the paper, we will \textit{always} assume that the feedback $u(x)$ satisfies Criterion~\Rref{uniqueness criterion}.  In this section, however, we will be careful to also note which ones would be false without this imposition.
    
    As mentioned previously, a large number of partial characterizations of stabilizability do exist, and some of the more useful elementary ones are worth mentioning quickly. First, let us briefly recall some conventional notation. Given a mapping $f \in \xCone(\mathcal{X} \times \mathcal{U},\xR^n)$ as in \eqref{control sys}, we denote the Jacobian of $f$ by $J_f$ and the partial Jacobian matrices of $f$ at $(0,0)$ by
    \begin{align}
    A_f := \restr{\frac{\partial f}{\partial x}}{(0,0)} \enskip \textrm{and} \enskip B_f := \restr{\frac{\partial f}{\partial u}}{(0,0)}. \label{AB def}
    \end{align}
    \noindent e.g. so that, as a block matrix, $\restr{J_f}{(0,0)} = \begin{bmatrix}A_f & B_f\end{bmatrix}$.  Given a linear operator $T\colon\xR^n\to\xR^n$, denote its {\it spectrum} by $\Lambda(T)$ and the subset of $\Lambda(T)$ consisting of the eigenvalues with {\it nonnegative real part} by
    $$
    \Lambda_+(T) := \left \{\lambda \in \Lambda(T)\;\big|\;\textrm{Re}(\lambda)\geq 0 \right \}.
    $$
    \noindent Next we recount the celebrated Hautus lemma needed below.
    \begin{lmm}[Hautus]\label{hautus lemma}
    Given an $n \times n$ matrix $A$ and an $n \times m$ matrix $B$, the linear system $\dot{x} = A x + B u$ is locally exponentially stabilizable if and only if for all $\lambda \in \Lambda_+(A)$ it holds that
    $$
    \textrm{rank}\left[\begin{array}{c|c}\lambda I - A & B \end{array} \right] = n.
    $$ 
    \end{lmm}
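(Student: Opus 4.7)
The plan is to prove both directions by standard linear algebraic techniques: a left-eigenvector argument for necessity, and the Kalman controllability decomposition combined with the pole placement theorem for sufficiency.

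For necessity, I would argue by contrapositive. Suppose there exists $\lambda \in \Lambda_+(A)$ with $\textrm{rank}[\lambda I - A \mid B] < n$. Then the left null space of this augmented matrix contains a nonzero row vector $v^{*}$, yielding simultaneously $v^{*} A = \lambda v^{*}$ and $v^{*} B = 0$. For any $\xCone$ stabilizing feedback $u(x)$ with $u(0) = 0$, local exponential stability of the closed-loop system forces the linearization $A + BK$, where $K := Du(0)$, to be Hurwitz by the classical Lyapunov indirect theorem. But $v^{*}(A + BK) = \lambda v^{*} + (v^{*} B)K = \lambda v^{*}$, so $\lambda$ persists as an eigenvalue of $A + BK$, contradicting $\textrm{Re}(\lambda) \geq 0$. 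The extension to merely continuous stabilizing feedbacks follows from the standard fact that, in the linear setting, exponential stabilizability by any continuous feedback is equivalent to stabilizability by some linear feedback, so the preceding argument suffices.

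For sufficiency, I would invoke the Kalman controllability decomposition: there exists an invertible matrix $P$ such that
\[
PAP^{-1} = \begin{bmatrix} A_1 & A_{12} \\ 0 & A_2 \end{bmatrix}, \qquad PB = \begin{bmatrix} B_1 \\ 0 \end{bmatrix},
\]
with the pair $(A_1, B_1)$ controllable. The Hautus rank hypothesis then forces $\Lambda_+(A_2) = \emptyset$: any $\lambda \in \Lambda_+(A_2)$ would admit a left eigenvector $w^{*}$ of $A_2$, and the row $[\,0 \ \ w^{*}\,]$ would then lie in the left null space of $[\lambda I - PAP^{-1} \mid PB]$, contradicting the rank assumption after undoing the coordinate change. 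Applying the pole placement theorem to the controllable pair $(A_1, B_1)$ produces a gain $K_1$ for which $A_1 + B_1 K_1$ is Hurwitz. Setting $K := [\,K_1 \ \ 0\,]P$ makes the closed-loop matrix $A + BK$ similar to a block upper triangular matrix with Hurwitz diagonal blocks $A_1 + B_1 K_1$ and $A_2$, so the linear feedback $u(x) = Kx$ exponentially stabilizes the system globally and, in particular, locally.

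The main obstacle is the sufficiency direction, specifically the invocation of the pole placement theorem, which is itself a nontrivial classical result typically proved through the Brunovsky canonical form or Ackermann's formula. The Kalman decomposition is a routine linear-algebraic construction and the left-null-space argument linking it to the Hautus condition is straightforward, but the arbitrary pole-assignment property of controllable pairs is substantive and I would cite rather than reprove it.
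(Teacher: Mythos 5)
The paper does not prove this lemma at all: it is quoted as ``the celebrated Hautus lemma'' and used as a known classical fact, so there is no in-paper argument to compare against. Your proof is the standard textbook one and is correct: the left-eigenvector computation $v^{*}(A+BK)=\lambda v^{*}$ for necessity, and the Kalman decomposition plus pole placement (with the observation that the Hautus rank condition forces the uncontrollable block $A_2$ to be Hurwitz) for sufficiency; both block computations check out, and conjugating by $P$ preserves the relevant ranks and spectra. The only point worth flagging is your necessity step for merely continuous feedbacks: the reduction ``exponential stabilizability of a linear system by continuous feedback implies stabilizability by linear feedback'' is true but is itself a nontrivial result (essentially Zabczyk's theorem, which the paper states separately as Theorem 2.2); in the usual statement of the Hautus lemma, and in the way this paper uses it, ``stabilizable'' for the linear system means by linear state feedback, so your $C^1$ linearization argument already suffices and the extra appeal is not needed.
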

    There is a similar result to the Hautus lemma, which applies to the linearization of a system like that given in \eqref{control sys}. That is, using the notation of \eqref{AB def}, it applies to the linearized system
    \begin{align}
    \dot{x} = A_f x + B_f u, \enskip t \geq 0. \label{linear control sys}
    \end{align} 
    This nonlinear analogue of the Hautus lemma via linearization was first proved by Zabczyk in \cite{zab}, though he mentioned that it was likely known as folklore for some time before a published proof was available.
    \begin{thrm}[Zabczyk]\label{zabczyk} The control system \eqref{control sys} with $f \in\xCone(\mathcal{X}\times\mathcal{U},\xR^n)$ is locally exponentially stabilizable by means of $\xCone$ stationary feedback laws if and only if the linearized system \eqref{linear control sys} is locally exponentially stabilizable.  Moreover, if the linearized system \eqref{linear control sys} is locally exponentially stabilizable, then the linear feedback law stabilizing \eqref{linear control sys} is a locally exponentially stabilizing feedback law for the original system \eqref{control sys}.
    \end{thrm}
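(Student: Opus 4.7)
The plan is to handle both directions of the equivalence by linearizing the closed-loop vector field, with the ``moreover'' clause falling out of the sufficiency argument. I will begin with sufficiency, since it is constructive and immediately yields the explicit feedback promised in the theorem.

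For sufficiency, assume the linearized system \eqref{linear control sys} is locally exponentially stabilizable. By Lemma~\Rref{hautus lemma} there exists $K \in \xR^{m \times n}$ with $A_f + B_f K$ Hurwitz. Applying the $\xCone$ stationary feedback $u(x) := Kx$ directly to \eqref{control sys} yields the closed-loop vector field $F(x) := f(x, Kx)$, which is $\xCone$ with $F(0) = 0$ and, by the chain rule, $DF(0) = A_f + B_f K$. Writing $F(x) = (A_f + B_f K)x + g(x)$ with $\|g(x)\|/\|x\| \to 0$ as $x \to 0$, solving the Lyapunov equation $(A_f+B_f K)^T P + P(A_f+B_f K) = -I$ produces a positive-definite $P$ for which $V(x) := x^T P x$ satisfies $\dot V(x) \leq -\tfrac12 \|x\|^2$ on some neighborhood of the origin, and the usual comparison argument then gives local exponential stability of $\dot x = F(x)$. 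Since the same $K$ is used throughout, this also establishes the ``moreover'' clause.

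For necessity, assume $u$ is a $\xCone$ stationary feedback locally exponentially stabilizing \eqref{control sys}. Set $K := Du(0)$ and $F(x) := f(x,u(x))$, so that by the chain rule $DF(0) = A_f + B_f K$. It suffices to show that $DF(0)$ is Hurwitz, since Lemma~\Rref{hautus lemma} would then give that $u(x) = Kx$ exponentially stabilizes the linear system \eqref{linear control sys}. The crux is therefore the converse Lyapunov statement: if $F \in \xCone$, $F(0) = 0$, and the origin of $\dot x = F(x)$ is locally exponentially stable, then every eigenvalue of $DF(0)$ has strictly negative real part. I will prove this by contradiction: assume $A := DF(0)$ admits an eigenvalue $\lambda$ with $\textrm{Re}(\lambda) \geq 0$, decompose $\xR^n$ into invariant subspaces of $A$, and use the variation-of-constants formula together with the estimate $g(x) := F(x) - Ax = o(\|x\|)$ to exhibit initial data $x_0^{(k)} \to 0$ whose trajectories either escape a fixed neighborhood of the origin (if $\textrm{Re}(\lambda) > 0$) or decay strictly more slowly than any prescribed exponential (if $\textrm{Re}(\lambda) = 0$), contradicting the assumed rate.

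The main obstacle is the case $\textrm{Re}(\lambda) = 0$, where neither Hartman--Grobman nor a naive linearization argument applies. Here I will exploit that the flow of the linearization on the center subspace preserves, up to uniform bounds, a quadratic form $x \mapsto x^T Q x$ in the diagonalizable case, so that adding the $o(\|x\|)$ perturbation $g$ yields $\tfrac{d}{dt}(x^T Q x) \geq -\epsilon \|x\|^2$ on a neighborhood of the origin for every prescribed $\epsilon > 0$, forcing a sub-exponential lower bound $\|x(t)\| \geq c\|x(0)\|e^{-\epsilon t}$ that is incompatible with exponential stability. The non-diagonalizable subcase is immediate since the linearization alone fails to be Lyapunov stable and this failure persists under $o(\|x\|)$ perturbations. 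Once this converse Lyapunov step is secured, Lemma~\Rref{hautus lemma} instantly converts the Hurwitz conclusion into local exponential stabilizability of \eqref{linear control sys}, completing the proof.
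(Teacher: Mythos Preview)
The paper does not supply its own proof of Theorem~\Rref{zabczyk}: it is quoted in the preliminaries as a known result, attributed to Zabczyk~\cite{zab} and preceded only by the remark that it ``was likely known as folklore for some time before a published proof was available.'' So there is no in-paper argument to compare against; your proposal stands or falls on its own merits.

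Your sufficiency direction is the standard Lyapunov linearization argument and is fine. The necessity direction, however, is more laborious than it needs to be and has a soft spot. The clean route is to differentiate the flow: local exponential stability gives $\|\varphi_t(x_0)\| \leq M\|x_0\|e^{-ct}$ for $\|x_0\|$ small, and since $\varphi_t$ is $\xCone$ with $D\varphi_t(0)=e^{DF(0)t}$, dividing by $\|x_0\|$ and letting $x_0\to 0$ along rays yields $\|e^{DF(0)t}\|\leq Me^{-ct}$, hence $DF(0)$ is Hurwitz in one stroke. Your case split avoids this and instead argues by contradiction on the spectrum; the diagonalizable $\textrm{Re}(\lambda)=0$ case via a conserved quadratic form is sound, but the non-diagonalizable subcase is not ``immediate.'' Polynomial growth of the \emph{linearization} does not by itself preclude asymptotic stability of the nonlinear system (center-manifold phenomena), and the assertion that ``this failure persists under $o(\|x\|)$ perturbations'' is exactly the delicate step you would still need to justify—namely, that the decay cannot be exponential. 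That gap is real, though fixable; the flow-differentiation argument above sidesteps it entirely.
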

    Combining Hautus' lemma and Zabczyk's theorem yields the following consequence.
    \begin{crllr}[Hautus-Zabczyk] \label{hautus-zabczyk}
    The control system \eqref{control sys} with $f\in\xCone(\mathcal{X}\times\mathcal{U},\xR^n)$ is locally exponentially stabilizable by means of $\xCone$ stationary feedback laws if and only if we have for all $\lambda \in \Lambda_+(A_f)$ that
    $$
    \textrm{rank}\left [\begin{array}{c|c}\lambda I-A_f & B_f \end{array}\right] = n.
    $$
    \end{crllr}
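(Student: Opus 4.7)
The plan is to chain together the two preceding named results: Zabczyk's theorem (Theorem 2.2) for the nonlinear-to-linear reduction, and the Hautus lemma (Lemma 2.1) for the spectral criterion on the linearization. Since both results are stated as biconditionals, the corollary should follow by a short equivalence chain with no real content beyond bookkeeping.

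More concretely, I would proceed as follows. First, I would invoke Theorem 2.2 (Zabczyk) with the given $f \in \xCone(\mathcal{X}\times\mathcal{U},\xR^n)$: this yields the equivalence that \eqref{control sys} is locally exponentially stabilizable by means of $\xCone$ stationary feedback laws if and only if the linearized system \eqref{linear control sys}, i.e.\ $\dot{x} = A_f x + B_f u$, is locally exponentially stabilizable in the classical linear sense. Second, I would apply Lemma 2.1 (Hautus) to the pair $(A, B) = (A_f, B_f)$ appearing in \eqref{linear control sys}: this gives that \eqref{linear control sys} is locally exponentially stabilizable if and only if
$$
\textrm{rank}\left[\begin{array}{c|c}\lambda I - A_f & B_f \end{array}\right] = n \quad \text{for all } \lambda \in \Lambda_+(A_f).
$$
Transitivity of the two biconditionals then yields the stated characterization.

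There is essentially no technical obstacle here; the only small points requiring care are making sure that the notational conventions match (that $A_f$ and $B_f$ as defined in \eqref{AB def} are precisely the matrices of the linearization \eqref{linear control sys} to which Hautus' criterion is applied), and that the smoothness class $\xCone$ entering the hypothesis on $f$ is exactly what is needed to define $A_f, B_f$ and to invoke Theorem 2.2. Since both of these are immediate from the set-up of Section 2.1, the proof of the corollary reduces to a one-line combination of the two previously stated results.
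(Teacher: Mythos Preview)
Your proposal is correct and matches the paper's approach exactly: the paper simply states that ``Combining Hautus' lemma and Zabczyk's theorem yields the following consequence'' and does not provide a separate proof, which is precisely the transitivity-of-biconditionals argument you outline.
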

    The most famous result regarding asymptotic stabilizability is probably a remarkable and easily formulated necessary condition given by Brockett \cite{brockett}. We discuss it in the next subsection.
\subsection{Brockett's Theorem}
\label{brocketts theorem subsection}
    Brockett gave in \cite{brockett} a necessary condition for feedback asymptotic stabilizability of nonlinear systems, which has attained a great attention in control theory. This condition constitutes (with the proof based on topological degree theory) that $f$ must satisfy a certain `local openness' property if such a controller exists.
    \begin{dfntn}[Locally Open]\label{open}
    A mapping $f \colon \xR^\ell\rightarrow \xR^n$ is said to be {\it open at a point} $\bar{z}\in\xR^\ell$ if we have $f(\bar{z}) \in \textrm{int}f(\mathcal{O})$ for any neighborhood $\mathcal{O}$ of $\bar{z}$.
    \end{dfntn}
     The origin of this property goes back to the classical Banach-Schauder open mapping theorem, which is one of the central results of functional analysis.
       
     \begin{thrm}[Brockett's Theorem] \label{brocketts} 
     If the system \eqref{control sys} with $f\in\xCone(\mathcal{X}\times\mathcal{U},\xR^n)$ is locally asymptotically stabilizable by means of stationary $\xCone$ feedback laws, then it is necessary that $f$ is open at $(0,0)$.
     \end{thrm}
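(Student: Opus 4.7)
The plan is to reduce the claim to the classical fact that an asymptotically stable equilibrium of an ordinary differential equation has nonzero topological index, and then to lift openness of the closed-loop vector field at $0\in\xR^n$ to openness of $f$ at $(0,0)\in\xR^n\times\xR^m$.

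Let $u$ be a $\xCone$ stationary feedback law rendering $0$ asymptotically stable for $\dot{x}=g(x)$, where $g(x):=f(x,u(x))$. Since $u$ is continuous with $u(0)=0$, the map $h(x):=(x,u(x))$ is continuous with $h(0)=(0,0)$. Hence, given any neighborhood $\mathcal{W}$ of $(0,0)$ in $\xR^n\times\xR^m$, there is a neighborhood $\mathcal{O}$ of $0$ in $\xR^n$ with $h(\mathcal{O})\subseteq\mathcal{W}$, from which $g(\mathcal{O})=f\bigl(h(\mathcal{O})\bigr)\subseteq f(\mathcal{W})$. So it suffices to show that the closed-loop vector field $g$ is open at $0\in\xR^n$: if $g(\mathcal{O})$ contains a neighborhood of $0$, then so does $f(\mathcal{W})$.

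To establish openness of $g$ at $0$, I would argue via Brouwer degree. Fix any neighborhood $\mathcal{O}$ of $0$. By asymptotic stability, $0$ is an isolated equilibrium of $\dot{x}=g(x)$, so one may choose $\varepsilon>0$ small enough that $\overline{B_\varepsilon(0)}\subset\mathcal{O}$, every trajectory starting in this ball stays inside and converges to $0$, and $g$ does not vanish on $\overline{B_\varepsilon(0)}\setminus\{0\}$. Invoking a converse Lyapunov theorem such as Massera's or Kurzweil's, pick a smooth Lyapunov function $V$ and a small sublevel set $\Omega:=\{V\leq c\}\subseteq B_\varepsilon(0)$ whose boundary $\partial\Omega$ is a smooth hypersurface across which $g$ points strictly inward. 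On $\partial\Omega$, both $g$ and $-\nabla V$ have strictly negative pairing with $\nabla V$, so the straight-line homotopy $tg+(1-t)(-\nabla V)$ remains nonvanishing, and the Brouwer degree $\deg(g,\Omega,0)$ agrees with that of the inward normal field $-\nabla V$, which is nonzero. By the stability and excision properties of the degree, $\deg(g,\Omega,y)\neq 0$ for all $y$ in some neighborhood $\mathcal{V}$ of $0$, whence $\mathcal{V}\subseteq g(\Omega)\subseteq g(\mathcal{O})$. This shows $g$ is open at $0$, and by the reduction above, $f$ is open at $(0,0)$.

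The main obstacle is the degree computation: one must know that the Brouwer index of an asymptotically stable equilibrium of a $\xCone$ vector field is nonzero. I would either quote this as a classical consequence of the Krasnoselskii--Zabreiko theory of rotation numbers for vector fields (or Milnor's work on vector field indices), or make the argument self-contained by combining a converse Lyapunov construction of a compact positively invariant sublevel set with the homotopy to the inward normal as sketched above. After this step, the remaining arguments reduce to routine applications of continuity of $u$ at $0$ and standard properties of the Brouwer degree.
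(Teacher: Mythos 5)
Your argument is correct and is the standard degree-theoretic proof of Brockett's theorem; the paper itself does not prove this statement but quotes it from \cite{brockett}, remarking only that the proof is ``based on topological degree theory,'' which is exactly the route you take. Your reduction from openness of the closed-loop field $g$ at $0$ to openness of $f$ at $(0,0)$ via continuity and stationarity of $x\mapsto(x,u(x))$ is sound, and the homotopy of $g$ to $-\nabla V$ along a regular sublevel boundary, combined with the nonvanishing of the index of an asymptotically stable equilibrium (which you correctly identify as the one nontrivial classical input, attributable to Krasnosel'skii--Zabreiko), completes the argument; the only step stated a bit quickly is that $\deg(-\nabla V,\Omega,0)\neq 0$, which needs either contractibility of the sublevel set $\Omega$ (it deformation retracts onto the origin along the flow) together with Poincar\'e--Hopf for inward-pointing fields, or the direct observation that the index of $\nabla V$ at an isolated minimum is $1$.
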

        
     Stated a bit differently, Brockett's theorem says that any system \eqref{control sys}, which is asymptotically stabilizable by means of continuously differentiable feedback laws, has a solution to $f(x,u)=y$ for all $\|y\|$ sufficiently small. More compactly, Brockett's condition says that such a system must be `locally surjective' on any neighborhood of the origin, i.e., $f$ is surjective onto a neighborhood of the origin in $\xR^n$ when restricted to some appropriate neighborhood of the origin in $\xR^n \times \xR^m$.  For intuition, the `archetypal' example of a mapping that satisfies this condition would be--in the spirit of the inverse function theorem--any such $f$ that has a Jacobian of full row-rank at the origin.  To contrast, while having a full row-rank Jacobian at the origin is certainly {\it sufficient}, it is definitely not {\it necessary}; take, for example, $f(x):=x^3$. 
      
     Strong attempts have been made to extend Brockett's theorem in some or another form and to `close the gap' between necessity and sufficiency when it comes to characterizing stabilizable systems. There are a large number of results like that, accounting for a similarly large number of relatively-involved distinctions regarding the continuity and differentiability classes of stabilizing controls--for example, by Coron in \cite{coron1}, by Sontag in \cite{sontag0,sontag2}, by Zabczyk in \cite{zab}, and many others. For an easy reference on a fairly large portion of the subject (and a truly excellent treatment of the topic overall), Byrnes \cite{byrnes} provides nice, streamlined proofs of both Coron and Zabczyk's results among other developments.\vspace*{0.05in}
     
     The following extension of Brockett's theorem from smooth to continuous system by means of continuous vs.\ smooth feedback laws is given by Coron \cite{coron2} with the usage of a result by Zabczyk from \cite{zab}.
     
     \begin{thrm}[Brockett's Theorem - Continuous Extension] \label{brocketts - continuous extension} If the system \eqref{control sys} is locally asymptotically stabilizable by means of continuous stationary feedback laws satisfying Criterion~\Rref{uniqueness criterion}, then it is necessary that the mapping $f$ is open at $(0,0)$.
    \end{thrm}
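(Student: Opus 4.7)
My plan is to reduce the claim to a closed-loop openness fact and then pull it back to $f$. Given a continuous stationary feedback $u$ with Criterion~\Rref{uniqueness criterion} in force, set $g(x):=f(x,u(x))$. The criterion guarantees that $g$ is continuous on a neighborhood of $0$ and that $\dot x = g(x)$ has unique forward trajectories, while the hypothesis gives that $0$ is a locally asymptotically stable equilibrium of this closed-loop system.

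The central intermediate step is the following \emph{continuous closed-loop Brockett}: \emph{if $g$ is continuous with $g(0)=0$, forward trajectories are unique, and $0$ is locally asymptotically stable, then $g$ is open at $0$.} Once this is in hand, the conclusion is automatic. Given any neighborhood $\mathcal{N}$ of $(0,0)$ in $\mathcal{X}\times\mathcal{U}$, continuity of $u$ combined with $u(0)=0$ lets me pick a neighborhood $V$ of $0$ in $\mathcal{X}$ with $\{(x,u(x)) : x\in V\}\subseteq \mathcal{N}$, and openness of $g$ at $0$ then supplies a neighborhood $W$ of $0$ in $\xR^n$ with $W\subseteq g(V)\subseteq f(\mathcal{N})$, which is exactly openness of $f$ at $(0,0)$.

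The hard part is the closed-loop statement, because Brockett's original argument uses $\xCone$ regularity both to compute the Brouwer degree of $g$ at $0$ and to deform along a smooth flow, neither of which is directly available here. I would replace it with a purely topological degree argument in the spirit of Zabczyk~\cite{zab} and Coron~\cite{coron2}. Under Criterion~\Rref{uniqueness criterion}, the classical continuous-dependence theorem for uniquely-solvable continuous ODEs gives that the forward flow $\varphi_t$ of $\dot x = g(x)$ is jointly continuous in $(t,x)$ on a forward-invariant closed ball $\overline{B_r}$ inside the basin of attraction of $0$, which can be chosen so that $0$ is its only equilibrium. On such a ball the homotopy $(s,x)\mapsto x-\varphi_s(x)$ is admissible for $s\in[\varepsilon,T]$---nonvanishing on $\partial B_r$ because $0$ is the only equilibrium---and for $T$ large enough its value at $s=T$ is uniformly close to $\mathrm{id}$ by asymptotic stability; hence $\deg(x-\varphi_\varepsilon(x),B_r,0)=1$ for every small $\varepsilon>0$. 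The integral identity $\varphi_\varepsilon(x)-x = \int_0^\varepsilon g(\varphi_s(x))\,ds$ then yields $\varepsilon^{-1}(x-\varphi_\varepsilon(x))\to -g(x)$ uniformly on $\overline{B_r}$ as $\varepsilon\downarrow 0$, and homotopy invariance of the degree (with admissibility secured by $g\neq 0$ on $\partial B_r$) upgrades this to $\deg(-g,B_r,0)=1$, forcing $g$ to be surjective onto a neighborhood of $0$. The principal technical obstacle is precisely this last chain---verifying that the flow, the homotopy, and the degree passage all go through under the weakened hypotheses furnished by Criterion~\Rref{uniqueness criterion} rather than by $\xCone$ smoothness---but each step is a standard consequence of continuous dependence for uniquely-solvable continuous ODEs.
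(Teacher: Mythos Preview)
The paper does not supply its own proof of this theorem; it is quoted as background in Section~\ref{brocketts theorem subsection} and attributed to Coron~\cite{coron2} building on Zabczyk~\cite{zab}. Your sketch is precisely the Zabczyk degree-theoretic argument those citations point to, so your approach and the paper's ``proof'' (i.e., its reference) coincide.

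Two small points in your write-up would need tightening, though neither breaks the argument. First, a forward-invariant \emph{round ball} $\overline{B_r}$ need not exist; the standard move is to work on a sublevel set of a continuous Lyapunov function, whose existence is guaranteed by a converse Lyapunov theorem valid under the uniqueness furnished by Criterion~\ref{uniqueness criterion}. Second, the admissibility of the homotopy $H(s,x)=x-\varphi_s(x)$ on the boundary for $s>0$ is not a consequence of ``$0$ is the only equilibrium'': $H(s,x)=0$ means $x$ is $s$-periodic, not that $x$ is a rest point. The correct justification is that asymptotic stability precludes nontrivial periodic orbits in the basin (a periodic trajectory cannot converge to $0$), which then forces $x=0$. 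With those two adjustments the degree computation and the pullback to $f$ go through exactly as you indicate.
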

    Observe that it is \textit{not} known whether or not Brockett's condition is required for local asymptotic stabilizability in the more general case of merely continuous feedback laws \textit{without} the assumption of Criterion~\Rref{uniqueness criterion} regarding the uniqueness of locally defined solutions. That being said, it is worth mentioning (as Sontag notes in \cite[Section 5.8, 5.9]{sontag2} and \cite[p. 554]{sontag0}) that continuous feedback laws can often be `smoothed out' away from the origin. Thus, combined with Sontag's proof in \cite[p. 560]{sontag0} that stabilization via continuous feedback laws which are locally Lipschitz away from the origin yields Brockett's condition (without making the assumption of uniqueness of trajectories), this does lend some support to the {\em conjecture} that Brockett's theorem may be necessary in the general case of Definition~\Rref{local asymptotic stabilizability definition} sans the strong requirement of unique trajectories.
    
    To conclude this short overview of known results in the area particularly related to Brockett's theorem, let us mention a new {\em variational approach} to feedback stabilizability of nonlinear control systems that was initiated in \cite{gjkm} and then further developed in our paper \cite{christopherson2019}. Though variational analytic connections will not be our focus here, we should note that some results in \cite{christopherson2019} were reached by arguments involving the use of composition operators. Particularly, \cite[Theorem 9]{christopherson2019} gives a condition (less restrictive than what is required by Corollary~\Rref{hautus-zabczyk}) which ensures the existence of stabilizing controls which are $\xCone$ on only a \textit{punctured} neighborhood of the origin (that is, $\mathcal{N}\setminus \left\{(0,0)\right\}$ for some neighborhood of the origin $\mathcal{N}$).  This extends results like Corollary~\Rref{hautus-zabczyk} to cover some instances of the case where locally asymptotically stabilization of \eqref{control sys} is only possible by \textit{continuous} (and not $\xCone$), stationary feedback laws.  The techniques employed in \cite{christopherson2019} seem to suggest that further applications of composition operators--utilized more adroitly--might allow for further progress in this direction.  This constitutes, in large part, the motivation for our approach herein.

\subsection{Outline}\label{outline subsection}
    The rest of the paper is organized as follows. Section~\Rref{hautus lemma for comp op section} is devoted to developing the composition counterpart of the extended Hautus-Zabczyk result in the novel context. In Section~\Rref{Brockett type}, we derive new results for composition operators under Brockett's openness condition. To do so, Subsection~\Rref{Brockett's Condition and Local Quotient Maps subsection} establishes some subtle properties of local quotient maps under the openness condition.  In Subsection~\Rref{brocketts theorem for comp ops subsection} we show that Brockett's theorem is still necessary for this generalization of stabilizability and, in some cases, sufficient. In Section~\Rref{a complete characterization of composition exponential stabilizability section}, we solve the problem of exponential stabilization by means of composition operators.  In the process, we will also solve a portion of the problem of asymptotic stabilization by composition operators.
    
    In Section~\Rref{kumagai ift section chapter 3}, we return to the case of stabilization via feedback laws.  We produce a characterization of local asymptotic and exponential stabilizability by feedbacks, and characterize what stabilizing controls must be like.  The conclusion, Section~\Rref{conclusion section chapter 3}, summarizes the main developments of the paper and discusses some open problems for future research.
\section{Generalized Hautus Lemma for Composition Operators}\label{hautus lemma for comp op section}
    Let's begin by seeing how a well-known feedback stabilizability result carries over to the composition stabilizability setting. In the spirit of the Hautus-Zabczyk result (i.e., Corollary~\Rref{hautus-zabczyk}) regarding the local exponential stabilizability of a system by $\xCn{k}$ feedback laws, we establish an analogous criterion for the local exponential stabilizability of a system by means of composition operators with a $\xCn{k}$ symbol.
            
    The next result in this subsection was proven previously by the authors in \cite{christopherson2019} (see, e.g. \cite[Lemma 5]{christopherson2019}), although the context was not so explicitly stated. We opt to include a modified (and much simpler) version of the proof here. 
    \begin{crllr}[Hautus-Zabczyk - Composition Stabilizability]\label{Hautus for comp stab}
    The system \eqref{control sys} with $f \in \xCone(\mathcal{X}\times\mathcal{U},\xR^n)$ is locally exponentially stabilizable by a composition operator with a $\xCone$ stationary symbol if and only if 
    \begin{equation}
    \textrm{rank} \left ( \restr{J_f}{(0,0)} \right ) = n. \label{jac rank condition}
    \end{equation}
    \end{crllr}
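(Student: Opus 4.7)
The plan is to split the proof into the two directions of the biconditional, with both sides hinging on the implicit function theorem and the linearization principle for $\xCone$ autonomous systems.

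For sufficiency, I would start by assuming that $\textrm{rank}\,(J_f|_{(0,0)}) = n$, so that $f$ is a submersion at the origin. By the standard implicit function theorem (after permuting coordinates to extract an invertible $n \times n$ block of $J_f|_{(0,0)}$), I obtain a $\xCone$ local section $\alpha \colon \mathcal{O} \to \mathcal{X} \times \mathcal{U}$ near the origin with $\alpha(0) = (0,0)$ and $f \circ \alpha = \textrm{id}_{\mathcal{O}}$. I then define the stationary symbol $h(x) := \alpha(-x)$, which is $\xCone$ and satisfies $h(0) = (0,0)$; the closed-loop system becomes
$$\dot x = (T_h f)(x) = f(\alpha(-x)) = -x,$$
which is globally, and hence locally, exponentially stable at the origin. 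Any Hurwitz matrix in place of $-I$ would serve equally well.

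For necessity, I would suppose there is a $\xCone$ stationary symbol $h$ with $h(0)=(0,0)$ such that $\dot x = (T_h f)(x)$ is locally exponentially stable. Since $T_h f \in \xCone$, I invoke the converse of Lyapunov's first method---namely, that local exponential stability of a $\xCone$ autonomous system at an equilibrium forces the Jacobian there to be Hurwitz---to conclude that $J_{T_h f}|_0$ is invertible. The chain rule then gives
$$J_{T_h f}|_0 = J_f|_{(0,0)} \cdot J_h|_0,$$
an $n \times n$ invertible matrix factoring through the $n \times (n+m)$ matrix $J_f|_{(0,0)}$ on the left and the $(n+m) \times n$ matrix $J_h|_0$ on the right. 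Invertibility of the product forces $J_f|_{(0,0)}$ to have full row rank, which is precisely \eqref{jac rank condition}.

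The main substantive obstacle is the converse linearization fact used in necessity. The easy direction---Hurwitz linearization implies local exponential stability---is classical Lyapunov first-method material. The converse requires more care, since an eigenvalue with nonnegative real part of the Jacobian precludes the associated linear system from being exponentially stable, and one must argue that the $o(\|x\|)$ nonlinear remainder cannot rescue exponential decay near the origin. This is typically obtained either from the converse Lyapunov theorem for exponentially stable equilibria (which yields a quadratic Lyapunov function near the origin whose existence forces the linearization to be Hurwitz) or by a direct Gronwall comparison of the nonlinear flow with its linearization on appropriately short time scales. Since the authors note they already proved a version of this statement elsewhere, I would simply cite that linearization equivalence and close the argument.
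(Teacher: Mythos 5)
Your proof is correct, but it takes a genuinely different route from the paper's. The paper disposes of this corollary in two lines by a pure reduction: writing $w=(x,u)$, it views $\dot{y}=F(y,w):=f(w)$ as a control system with trivial dependence on the state $y$, observes that feedback stabilization of this augmented system is exactly composition stabilization of the original one, and invokes Corollary~\ref{hautus-zabczyk}; since $A_F=0$, the eigenvalue $\lambda=0$ lies in $\Lambda_+(A_F)$ and the Hautus test collapses to $\textrm{rank}\left(\restr{J_f}{(0,0)}\right)=n$. You instead argue directly: for sufficiency, the implicit function theorem (applied after extracting an invertible $n\times n$ block) produces a $\xCone$ stationary local section $\alpha$, and the symbol $h(x)=\alpha(-x)$ realizes $T_hf(x)=-x$; for necessity, the converse linearization theorem plus the chain-rule factorization $\restr{J_{T_hf}}{0}=\restr{J_f}{(0,0)}\cdot\restr{J_h}{0}$ forces full row rank. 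Both arguments are sound. What the paper's route buys is brevity, at the cost of burying the linearization analysis inside Zabczyk's theorem---which is exactly the ``converse Lyapunov first method'' fact you correctly identify as the substantive obstacle, so the two proofs rest on the same analytic core. What your route buys is more: your sufficiency half already constructs the local section and is essentially the reverse implication of the paper's Theorem~\ref{hautus for comp stab implies continuous right inverse}, which the paper only establishes afterward (and does so by citing this very corollary), so your self-contained version would let that later theorem be proved without the detour through the augmented system.
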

    \begin{proof}
    Writing $w = (x,u)$, consider the system $\dot{y}=F(y,w) := f(w)$ with trivial dependence on the state variable $y$.  Clearly, stabilization of this system by feedback laws is equivalent to the stabilization of \eqref{control sys} by composition operators.  As such, Corollary~\Rref{hautus-zabczyk} gives the result. 
    \end{proof}
    This result isn't surprising at all--it is just a sub-case of that given in Corollary~\Rref{hautus-zabczyk}.  Indeed, the condition \eqref{jac rank condition} must happen in the case of stabilization by feedback laws as well: suppose that \eqref{control sys} is locally exponentially stabilizable by means of continuously differentiable feedback laws. Then there are two possibilities: either $A_f$ is full rank, or it is not. In the first case, if $A_f$ is of full rank, then $\restr{J_f}{(0,0)}$ certainly has full row-rank. In the second case, if $A_f$ is not of full rank, then $\lambda=0$ is certainly an eigenvalue of $A_f$ with nonnegative real part, and thus the Hautus lemma dictates that we have 
     $$
     \textrm{rank}\left[\begin{array}{c|c}\lambda I-A_f & B_f\end{array}\right] = \textrm{rank}\left( \restr{J_f}{(0,0)} \right) = n.
     $$
     
     The result of Corollary~\Rref{Hautus for comp stab} serves as good motivation for what we will proceed to do in much of the remainder of the paper.  Namely, notice that if \eqref{jac rank condition} holds, then $\restr{J_f}{(0,0)}$ has a \textit{continuous right inverse}--namely, the Moore-Penrose pseudoinverse $\left(\restr{J_f}{(0,0)}\right)^+$. In fact, this holds more generally.  
     
     In the following theorem, we show that the existence of a $\xCn{k}$ stationary local section near zero is equivalent to exponential stabilizability of \eqref{control sys} by means of a $\xCn{k}$ stationary composition operator.
     \begin{thrm}[Composition Stabilizability and Local Sections - $\xCn{k}$ Version]\label{hautus for comp stab implies continuous right inverse}
     The system \eqref{control sys} with $f \in \xCn{k}(\mathcal{X}\times\mathcal{U},\xR^n)$, where $k\geq 1$, is locally exponentially stabilizable by a composition operator with a $\xCn{k}$ stationary symbol if and only if $f$ has a  stationary $\xCn{k}$ local section at the origin.
     \end{thrm}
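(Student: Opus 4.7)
The plan is to handle the two directions separately: the backward direction is a direct construction, and the forward direction follows from Corollary~\ref{Hautus for comp stab} combined with the submersion form of the implicit function theorem.

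For sufficiency, suppose $\sigma \colon \mathcal{O} \to \mathcal{X} \times \mathcal{U}$ is a $\xCn{k}$ stationary local section of $f$ at the origin, so $\sigma(0) = (0,0)$ and $f(\sigma(y)) = y$ for every $y \in \mathcal{O}$. First I would shrink $\mathcal{O}$ to a symmetric neighborhood $\mathcal{O}' \subseteq \mathcal{O} \cap (-\mathcal{O})$ of $0 \in \xR^n$ and define $h \colon \mathcal{O}' \to \mathcal{X} \times \mathcal{U}$ by $h(x) := \sigma(-x)$. Then $h$ is $\xCn{k}$ with $h(0) = (0,0)$, hence a valid $\xCn{k}$ stationary symbol. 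The closed-loop composition system reduces to
\[
\dot{x} = (T_h f)(x) = f\bigl(\sigma(-x)\bigr) = -x,
\]
which has the origin as a globally (and therefore locally) exponentially stable equilibrium; Criterion~\ref{uniqueness criterion for composition operators} is automatic since $-x$ is smooth.

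For necessity, suppose \eqref{control sys} is locally exponentially stabilizable by a composition operator $T_h$ with a $\xCn{k}$ stationary symbol; in particular $h$ is $\xCone$, so Corollary~\ref{Hautus for comp stab} applies and yields that $J_f|_{(0,0)}$ has rank $n$. Hence $Df|_{(0,0)} \colon \xR^{n+m} \to \xR^n$ is surjective. Choosing $n$ of the $n+m$ coordinate directions along which the restriction of $Df|_{(0,0)}$ is invertible and applying the $\xCn{k}$ implicit function theorem (in its submersion form) to $f(z)=y$, I obtain an open neighborhood $\mathcal{O}$ of $0 \in \xR^n$ and a $\xCn{k}$ map $\sigma \colon \mathcal{O} \to \mathcal{X} \times \mathcal{U}$ with $\sigma(0) = (0,0)$ and $f \circ \sigma = \iota \circ \textrm{id}_{\mathcal{O}}$. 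This is exactly a $\xCn{k}$ stationary local section of $f$ near the origin.

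The main obstacle here is conceptual rather than technical: one must notice that the rank condition of Corollary~\ref{Hautus for comp stab} is, via the implicit function theorem, equivalent to the existence of a $\xCn{k}$ local section, and that any such section can simply be pre-composed with a linear exponential stabilizer of the trivial system $\dot{y}=y$ (here the flip $y \mapsto -y$) to manufacture the desired stabilizing composition operator. Once this is seen, no additional work is required to control $h$ or verify the uniqueness criterion, because the closed loop collapses identically to $\dot{x}=-x$.
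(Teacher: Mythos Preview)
Your proof is correct and follows essentially the same strategy as the paper. The backward direction is identical in spirit---the paper composes the section with an arbitrary $\xCn{k}$ exponentially stable $g$, and your choice $g(x)=-x$ is simply the cleanest instance of this. In the forward direction you take a slightly more direct route: you invoke Corollary~\ref{Hautus for comp stab} on $f$ itself to obtain $\textrm{rank}(\restr{J_f}{(0,0)})=n$ and then apply the $\xCn{k}$ submersion form of the implicit function theorem directly to $f$, whereas the paper applies Corollary~\ref{Hautus for comp stab} to the closed-loop system $\dot{x}=T_hf(x)$, obtains that $\restr{J_{T_hf}}{0}$ is invertible, uses the IFT to locally invert $T_hf$, and then post-composes with $h$ to get the section. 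Your shortcut is legitimate because the rank condition on $\restr{J_f}{(0,0)}$ is exactly what Corollary~\ref{Hautus for comp stab} provides, so there is no need to pass through $T_hf$; the paper's detour buys nothing extra here.
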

     \begin{proof}
     The reverse direction is obvious:  Suppose $f$ has a $\xCn{k}$ right inverse $\alpha$ defined on some open neighborhood $\mathcal{O}\subseteq \xR^n$ of the origin.  Take any system $\dot{x}=g(x)$ for which the origin is a locally exponentially stable equilibrium and $g$ is $\xCn{k}$.  Let $\iota:\mathcal{O} \rightarrow \xR^n$ denote the inclusion map and let $\tilde{g}:\mathcal{N} \rightarrow \mathcal{O}$ be such that $\iota \circ \tilde{g} = \restr{g}{\mathcal{N}}$ for some neighborhood of the origin $\mathcal{N}$ (i.e. so that $g(\mathcal{N}) \subseteq \mathcal{O}$, as is always possible by continuity).  Choosing $T_h$ to be the composition operator with symbol $h=\alpha \circ \tilde{g}$ so that 
     $$
     T_hf= f \circ \alpha \circ \tilde{g} = \iota \circ \tilde{g}= \restr{g}{\mathcal{N}},
     $$
     it is clear that $T_h$ locally exponentially stabilizes \eqref{control sys}.  
     
     In the forward direction, assume that the system \eqref{control sys} is locally exponentially stabilizable by means of a composition operator $T_h$ with a $\xCn{k}$ stationary symbol.  Then, the system $\dot{x}=T_hf(x)$ is locally exponentially stabilizable by means of the composition operator $T_{x}$, which certainly has a $\xCone$ stationary symbol.  By Corollary~\Rref{Hautus for comp stab}, $\restr{J_{T_hf}}{0}$ has full row-rank and the function $F(x,y) := T_hf(y) - x$ satisfies the conditions of the Implicit Function Theorem.  So, there exists an open neighborhood of the origin $\mathcal{O} \subseteq \xR^n$ and a $\xCn{k}$ function $g \colon \mathcal{O} \rightarrow \xR^n$ such that $g(0)=0$ and $F\big(x,g(x)\big) = 0$ for all $x \in \mathcal{O}$.  Correspondingly, the function $\alpha \colon \mathcal{O}\rightarrow \mathcal{X}\times \mathcal{U}$ defined by $\alpha := h \circ g$ is $\xCn{k}$, stationary, and satisfies $(f \circ \alpha )(x) = x$ for all $x \in \mathcal{O}$.  That is, $\alpha$ is a $\xCn{k}$ local section of $f$ near the origin.
     \end{proof}
     The central thesis of this paper's approach to stabilization by composition operators is largely characterized by the result above.  That is, in the remainder of the paper, we will show that Theorem~\Rref{hautus for comp stab implies continuous right inverse} generalizes to characterize stabilizability by means of composition operators almost entirely.  To this aim, we will begin by showing that Brockett's theorem holds in the context of stabilization by composition operators in the next section.  After some prerequisite details are dealt with following this, we will show that the obvious non-differentiable analog of the result of Theorem~\Rref{hautus for comp stab implies continuous right inverse} holds as well.
\section{Extended Brockett Theorem and Composition Properties under Openness}\label{Brockett type}
    This section studies the role of the openness property of vector fields of control systems in stabilizability of such systems by means of composition operators. We begin in Subsection~\Rref{Brockett's Condition and Local Quotient Maps subsection} by first investigating the openness property of Brockett's theorem, yielding a connection to quotient maps that will be important throughout the remainder of this paper.  After this, in Subsection~\Rref{brocketts theorem for comp ops subsection}, we show that Brockett's openness property is not only necessary, but is also sufficient, for stabilization by composition operators with stationary symbols that are continuous at the origin provided the origin is an isolated equilibrium of \eqref{control sys}.
\subsection{Openness Condition and Local Quotient Maps}\label{Brockett's Condition and Local Quotient Maps subsection}
    The first lemma here showcases a useful fact about continuous surjections between arbitrary topological spaces satisfying the property given in Definition~\Rref{open} for some point $x_0$ in their domain.
    \begin{lmm}\label{brockett's local quotient property}
    Let $X$ and $Y$ be topological spaces. Fix $x_0 \in X $ and suppose that $F \colon X \rightarrow Y$ is a continuous surjection which is open at $x_0$. Then, for any topological space $Z$ and any $g \colon Y\rightarrow Z$, the mapping $g$ is continuous at $F(x_0)$ if and only if the composition $g\circ F$ is continuous at $x_0$.
    \end{lmm}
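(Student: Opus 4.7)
The plan is to split the biconditional into its two directions. The forward implication is immediate from the standard fact that the composition of functions continuous at matching points is continuous at the preimage point: since $F$ is continuous (in particular at $x_0$) and $g$ is assumed continuous at $F(x_0)$, the composition $g \circ F$ is continuous at $x_0$. No use of the openness hypothesis is required here, and the surjectivity assumption likewise plays no role.

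For the reverse direction, suppose $g \circ F$ is continuous at $x_0$, and let $V \subseteq Z$ be any open neighborhood of $g(F(x_0))$. I would use the continuity of $g \circ F$ at $x_0$ to produce an open neighborhood $\mathcal{O}$ of $x_0$ in $X$ such that $(g \circ F)(\mathcal{O}) \subseteq V$. Then I would invoke the openness of $F$ at $x_0$ directly from Definition~\Rref{open}: since $\mathcal{O}$ is a neighborhood of $x_0$, the point $F(x_0)$ lies in the interior of $F(\mathcal{O})$. Setting $W := \mathrm{int}\,F(\mathcal{O})$ therefore gives an open neighborhood of $F(x_0)$ in $Y$ contained in $F(\mathcal{O})$.

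To conclude, I would pick an arbitrary $y \in W$ and choose any $x \in \mathcal{O}$ with $F(x) = y$ (possible because $W \subseteq F(\mathcal{O})$); then $g(y) = g(F(x)) = (g \circ F)(x) \in V$, so $g(W) \subseteq V$, establishing continuity of $g$ at $F(x_0)$. The only delicate point in the whole argument -- and the main (mild) obstacle -- is that $F(\mathcal{O})$ itself need not be open in $Y$; the openness-at-$x_0$ property must be used to pass to its interior $W$ while retaining $F(x_0) \in W$. Once that is done, the rest is a routine chase through the definitions, and neither the global surjectivity of $F$ nor continuity of $F$ at points other than $x_0$ is invoked.
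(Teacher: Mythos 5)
Your proof is correct and follows essentially the same route as the paper's: the forward direction is the standard composition-of-continuous-maps fact, and the reverse direction uses openness of $F$ at $x_0$ to upgrade $F(\mathcal{O})$ to a neighborhood of $F(x_0)$ that $g$ maps into the prescribed target neighborhood. Your side remark that surjectivity plays no role is also accurate -- the paper invokes it to assert $F\big(F^{-1}(S)\big)=S$, but only the always-valid inclusion $F\big(F^{-1}(S)\big)\subseteq S$ is actually needed at that step.
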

    \begin{proof}
    In the forward direction, suppose that $g$ is continuous at $F(x_0)$. The continuity of the composition $g\circ F \colon X \rightarrow Z$ at $x_0$ follows immediately, since continuity is preserved under compositions. That is, if $Z_0 \subseteq Z$ is a neighborhood of $g(F(x_0))$, then the continuity of $g$ at $F(x_0)$ ensures that $g^{-1}(Z_0)$ is a neighborhood of $F(x_0)$. Using the continuity of $F$ at $x_0$ tells us that the inverse image $F^{-1}(g^{-1}(Z_0))$ is a neighborhood of $x_0$.
            
    In the reverse direction, suppose that $g \circ F$ is continuous at $x_0$. Then, for any neighborhood $Z_0$ of $g(F(x_0))$, there exists a neighborhood  $\mathcal{O}\subseteq X$ of $x_0$ such that $\mathcal{O} \subseteq (g \circ F)^{-1}(Z_0)$. Observing that 
    $$
    \mathcal{O} \subseteq (g \circ F)^{-1}(Z_0) = F^{-1}\big(g^{-1}(Z_0)\big)
    $$
    and employing the surjectivity of $F$ imply that $F\big(F^{-1}(S)\big)=S$ for all subsets $S \subseteq X$. Therefore, it yields
    $$
    F(\mathcal{O}) \subseteq F\big(F^{-1}(g^{-1}(Z_0))\big) = g^{-1}(Z_0).
    $$   
    Since $F$ is open at $x_0$ and $\mathcal{O}$ is a neighborhood of $x_0$, we get that $F(\mathcal{O})$ is a neighborhood of $F(x_0)$. Remembering that $Z_0$ was an arbitrarily chosen neighborhood of $g(F(x_0))$, the continuity of $g$ at $F(x_0)$ follows.
    \end{proof}
    Lemma~\Rref{brockett's local quotient property} has an interesting interpretation in terms of {\em quotient maps}.
    \begin{dfntn}
        Let $X$ and $Y$ be topological spaces.  We say that a continuous function $f:X\rightarrow Y$  is a quotient map if $f$ is surjective and $f^{-1}(U) \subseteq X$ is open if and only if $U \subseteq Y$ is open
    \end{dfntn}
    Recall that quotient maps are characterized among continuous surjections as follows; see, e.g., \cite{willard2004general}:
    \begin{thrm}\label{characteristic property of quotient maps}
    Let $q \colon X \rightarrow Y$ be a continuous surjection.  The following are equivalent:
    \begin{enumerate}
    \item[\rm(i)] $q$ is a quotient map.
    \item[\rm(ii)] For any topological space $Z$ and any mapping $g \colon Y\rightarrow Z$, we have that $g$ is continuous if and only if the composition $g \circ q$ is continuous.
    \end{enumerate}
    \end{thrm}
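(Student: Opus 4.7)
The plan is to prove the two implications separately, using the standard categorical trick for the harder direction.

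For (i) $\Rightarrow$ (ii), the ``only if'' direction of the biconditional is trivial, since the composition of continuous functions is continuous. The substantive content is the ``if'' direction: suppose $q$ is a quotient map and $g \circ q$ is continuous; I want to show $g$ is continuous. Given an open set $V \subseteq Z$, the preimage $(g \circ q)^{-1}(V) = q^{-1}\bigl(g^{-1}(V)\bigr)$ is open in $X$ by continuity of $g \circ q$. Since $q$ is a quotient map, the defining property of quotient maps (openness of preimage implies openness of the set) yields that $g^{-1}(V)$ is open in $Y$. This is exactly continuity of $g$.

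For (ii) $\Rightarrow$ (i), the nontrivial task is to show that if $q^{-1}(U)$ is open in $X$, then $U$ is open in $Y$; the converse is automatic from the assumed continuity of $q$. The main obstacle here is choosing the right test space $Z$ and mapping $g$ to apply the hypothesis in (ii), since (ii) only gives us a tool for detecting continuity. The idea is to let $Z := (Y, \tau')$, where $\tau'$ denotes the quotient topology induced on $Y$ by $q$, i.e.\ the finest topology making $q \colon X \to Z$ continuous. Set $g \colon Y \to Z$ to be the identity map on underlying sets. Then $g \circ q \colon X \to Z$ is exactly $q$ viewed into the quotient topology, which is continuous by construction of $\tau'$. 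Applying hypothesis (ii), the map $g$ itself must be continuous, i.e.\ every $\tau'$-open set is open in the original topology of $Y$. In particular, if $q^{-1}(U)$ is open in $X$, then $U$ is $\tau'$-open by definition of the quotient topology, hence open in $Y$ by continuity of $g$. This verifies the defining property of a quotient map.

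The main subtlety to keep straight is the direction of the comparison between the original topology on $Y$ and the quotient topology $\tau'$. Continuity of $q \colon X \to (Y, \tau_Y)$ means $\tau_Y \subseteq \tau'$, so the content of the argument above is really the reverse containment $\tau' \subseteq \tau_Y$, established by feeding the identity map through (ii). Once these two containments are in hand, the defining property of a quotient map follows immediately, completing the proof.
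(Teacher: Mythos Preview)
Your proof is correct and is the standard argument for this classical characterization. Note, however, that the paper does not actually supply a proof of this theorem; it is stated with a citation to Willard's \emph{General Topology} and used as background. There is thus no in-paper proof to compare against, but your argument is exactly the one found in standard references.
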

    Similarly, to show that a continuous surjection $q$ is a quotient map, recall that it is sufficient (though not necessary) to verify that $q$ is either an \textit{open map} or a \textit{closed map}.  That is, if a continuous surjection is to be a quotient map, it is sufficient that it is open at \textit{every point in its domain}.  Essentially, it is the \textit{global} analogue to the \textit{local} version given in the assumptions of Lemma~\Rref{brockett's local quotient property}.
        
    In fact, the similarities between this and what we have in the Definition~\Rref{open} are really quite apparent. Namely, instead of a \textit{continuous surjection} which is \textit{open at every point}, we have a continuous surjection that is \textit{open at a selected point}.  In this sense, a continuous surjection which satisfies Definition~\Rref{open} must be something like a \textit{local} quotient map, i.e., a quotient map \textit{at a point}.  In light of the characteristic \textit{global} property of quotient maps (i.e., Theorem~\Rref{characteristic property of quotient maps}), it's not surprising that the \textit{local analogue} of a quotient map induces a \textit{local} analogue of this characteristic property (i.e., Lemma~\Rref{brockett's local quotient property}).\vspace*{0.05in}
    
    As it turns out, we can go further in some situations.  Recall that we say a map $F \colon X \rightarrow Y$ is \textit{proper} if $F^{-1}(K)$ is compact for every compact set $K \subseteq Y$.  Similarly, we say that a topological space $X$ is \textit{locally compact} if every point $x \in X$ has a compact neighborhood.  As is well-known, if $X$ is Hausdorff, this is equivalent to the statement that every neighborhood $N$ of a point $x \in X$ contains a compact neighborhood $K$ of $x$.  Finally, recall, as mentioned previously following Theorem~\Rref{characteristic property of quotient maps}, that if one desires to show that a mapping $F \colon X \rightarrow Y$ is a quotient map, then it is sufficient to show that $F$ is a closed, continuous surjection.
    \begin{lmm}\label{surjective restriction}
    Let $X$ and $Y$ be topological spaces and suppose that $F \colon X \rightarrow Y$ is continuous, $F(x_0)=y_0$, and $F$ is open at $x_0 \in X$. Then there exist an open neighborhood $Y_0 \subseteq Y$ of $y_0$, an open neighborhood $X_0\subseteq X$ of $x_0$, and a continuous surjection $\tilde{F} \colon X_0 \rightarrow Y_0$ which is open at $x_0$ and satisfies $\iota \circ \tilde{F} = \restr{F}{X_0}$, where $\iota \colon Y_0 \rightarrow Y$ denotes the inclusion map.  
    
    If, in addition, both $X$ and $Y$ are locally compact and $Y$ is Hausdorff, then we can instead take $X_0$ and $Y_0$ to both be compact and such that $\tilde{F}$ is a closed quotient map.
    \end{lmm}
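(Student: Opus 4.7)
The plan is in two stages: first I would construct $X_0$ and $Y_0$ satisfying the first paragraph of the lemma using only the openness of $F$ at $x_0$, and then refine the construction under the local-compactness hypotheses so that $X_0$ and $Y_0$ become compact. For the first stage, pick any open neighborhood $U$ of $x_0$ in $X$ and set $Y_0 := \mathrm{int}\,F(U)$, with interior taken in $Y$. The hypothesis that $F$ is open at $x_0$ says precisely that $y_0 \in Y_0$, so $Y_0$ is an open neighborhood of $y_0$. Then set $X_0 := U \cap F^{-1}(Y_0)$, an open neighborhood of $x_0$ on which $F$ takes values in $Y_0$, and define $\tilde{F}$ as the restriction $F|_{X_0}$ regarded as a map into $Y_0$; continuity and the identity $\iota \circ \tilde{F} = F|_{X_0}$ are automatic. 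Surjectivity follows from $Y_0 \subseteq F(U)$: every $y \in Y_0$ has a preimage $x \in U$, and $F(x) = y \in Y_0$ forces $x \in X_0$. For openness of $\tilde{F}$ at $x_0$, I would take a neighborhood $N$ of $x_0$ in $X_0$, choose an open $V \subseteq X$ with $x_0 \in V \cap X_0 \subseteq N$, and apply the openness of $F$ at $x_0$ to the $X$-open set $V \cap X_0$; this yields a $Y$-neighborhood of $y_0$ inside $F(V \cap X_0) \subseteq \tilde{F}(N)$, which is automatically a $Y_0$-neighborhood because $Y_0$ is open in $Y$.

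For the second stage, I would shrink these choices using local compactness. Pick a compact neighborhood $K$ of $x_0$ in $X$, let $U := \mathrm{int}\,K$, and observe that $F(K)$ is compact in the Hausdorff space $Y$, hence closed, with $y_0 \in \mathrm{int}\,F(U) \subseteq F(K)$. Because $Y$ is locally compact Hausdorff, $y_0$ admits a compact neighborhood base, so I can choose a compact neighborhood $Y_0$ of $y_0$ with $Y_0 \subseteq \mathrm{int}\,F(U)$. Now set $X_0 := K \cap F^{-1}(Y_0)$; this is a closed subset of the compact set $K$, hence compact, and it contains the open set $U \cap F^{-1}(\mathrm{int}\,Y_0)$, which is an open neighborhood of $x_0$, so $X_0$ is itself a neighborhood of $x_0$. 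Surjectivity of $\tilde{F} := F|_{X_0}$ onto $Y_0$ repeats the earlier argument with $K$ in place of $U$. The closed-quotient conclusion then comes for free: a continuous map from the compact space $X_0$ into the Hausdorff space $Y_0$ is closed, and every closed continuous surjection is a quotient map. For openness of $\tilde{F}$ at $x_0$, I would repeat the first-stage argument with $V' := V \cap U \cap F^{-1}(\mathrm{int}\,Y_0)$ replacing $V \cap X_0$; this $V'$ still contains $x_0$, sits inside $V \cap X_0 \subseteq N$ and inside $F^{-1}(Y_0) \cap K = X_0$, and has its $F$-image in the $Y$-open set $\mathrm{int}\,Y_0 \subseteq Y_0$, so the $Y$-interior produced by the openness of $F$ at $x_0$ doubles as a $Y_0$-interior.

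The \emph{main obstacle}, in my view, is purely bookkeeping: the openness hypothesis on $F$ is formulated in the ambient space $Y$, whereas the conclusions we want are for $\tilde{F}$ mapping into the subspace $Y_0$. Arranging $Y_0$ so that $y_0$ lies inside a $Y$-open subset of $Y_0$ (either $Y_0$ itself in the first part, or $\mathrm{int}\,Y_0$ in the second) is the device that converts $Y$-interiors into $Y_0$-interiors cleanly, and is what forces the choice $Y_0 \subseteq \mathrm{int}\,F(U)$ in the compact case. Once that accounting is done, the rest of the argument is standard compactness-Hausdorff reasoning.
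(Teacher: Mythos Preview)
Your proof is correct and follows essentially the same route as the paper's: choose $Y_0$ inside the image of a neighborhood of $x_0$, pull it back (intersecting with $K$ in the compact case) to get $X_0$, and invoke the closed map lemma (compact $\to$ Hausdorff) to conclude that $\tilde{F}$ is a closed quotient map. The only cosmetic differences are that the paper takes $Y_0$ as an arbitrary open (resp.\ compact) set inside $F(X)$ (resp.\ $F(K)$) rather than explicitly $\mathrm{int}\,F(U)$, and obtains compactness of $X_0$ via properness of $F|_K$ rather than ``closed subset of compact''; your extra verification of openness of $\tilde{F}$ at $x_0$ in the compact case is not actually claimed in the lemma's second paragraph but does no harm.
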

    \begin{proof}
    Since $F$ is open at $x_0 \in X$, it follows that $F(x_0) \in \textrm{int}F(X_0)$ for any neighborhood $X_0 \subseteq X$ of $x_0$.  In particular, by $x_0 \in \textrm{int}(X)$ we have $F(x_0) \in \textrm{int}F(X)$. It allows us to choose an open neighborhood $Y_0$ of $F(x_0)=y_0 \in Y$ such that $Y_0 \subseteq F(X)$. Write $X_0:=F^{-1}(Y_0)$ and take the inclusion map $\iota \colon Y_0 \rightarrow Y$. Then $X_0$ is an open neighborhood of $x_0$ by the continuity of $F$, and the mapping $\tilde{F} \colon X_0\rightarrow Y_0$ defined by $\iota \circ \tilde{F}:=\restr{F}{X_0}$ is a surjection by construction. Since $\tilde{F}$ agrees with $F$ on $X_0$, the claimed openness and continuity follow as well.
    
    Now, let us also assume that $X$ is locally compact and that $Y$ is locally compact and Hausdorff.  Note that by the local compactness of $X$, there must exist a compact neighborhood $K\subseteq X$ of $x_0$.  If $Y$ is Hausdorff, then $\restr{F}{K} \colon K \rightarrow Y$ is a continuous map from a compact space to a Hausdorff space. By the Closed Map Lemma, $\restr{F}{K}$ is therefore both proper and closed.  By the openness of $F$ at $x_0$, we have that $F(x_0) \in \textrm{int}F(K)$, and there exists a neighborhood of $F(x_0)$ contained in $F(K)$.  As we have assumed that $Y$ is both locally compact and Hausdorff, we may take this to be a compact neighborhood $Y_0\subseteq F(K)$ of $F(x_0)$.  Since $\restr{F}{K}$ is proper and continuous, $X_0 := \restr{F}{K}^{-1}(Y_0) = K \cap F^{-1}(Y_0) \subseteq K$ is a compact neighborhood of $x_0$. Define the mapping $\tilde{F} \colon  X_0 \rightarrow Y_0$ by $\iota \circ \tilde{F} = \restr{\left(\restr{F}{K}\right)}{X_0} = \restr{F}{X_0}$, where $\iota \colon Y_0 \rightarrow Y$ is again the inclusion map.  Noting that a subspace of a Hausdorff space is itself a Hausdorff space, $Y_0$ is Hausdorff and the mapping $\tilde{F}$ is continuous map from a compact space to a Hausdorff space and a surjection by construction.  Hence, via another application of the Closed Map Lemma, $\tilde{F}$ is a proper, closed, continuous surjection and therefore, a closed quotient map.
    \end{proof}
    Every function factors as a surjection (onto its image) and an injection (of the image into the codomain), so a factorization of this form is nothing surprising.  What Lemma~\Rref{surjective restriction} really tells us, in essence, is that systems \eqref{control sys} satisfying Brockett's necessary condition factor in this way, but with \textit{neighborhoods of the origin} as the respective domain and codomain of the corresponding surjection and inclusion.  Moreover, systems \eqref{control sys} that satisfy Brockett's condition do not just have a \textit{local analogue} to the characteristic property of quotient maps, but factor as inclusions and quotient maps \textit{themselves} on some suitable restriction of their domain.
    
    Recall that any surjection $F \colon X \rightarrow Y$ necessarily has a right inverse $\alpha \colon Y \rightarrow X$, i.e. a mapping $\alpha$ such that $F \circ \alpha = \textrm{id}_{Y}$.  This gives us a nice, albeit obvious, result.  For clarity, there is \textit{no assumption} on the continuity of the symbol of the stabilizing composition operator in what follows.
    \begin{thrm}[Discontinuous Composition Stabilizability]\label{discontinuous composition stabilizability theorem}
    If the vector field $f$ in \eqref{control sys} satisfies Brockett's condition, then \eqref{control sys} is locally exponentially stabilizable by means of a composition operator with a stationary symbol.
    \end{thrm}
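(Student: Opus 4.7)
The plan is to use Lemma~\Rref{surjective restriction} to reduce Brockett's openness at $(0,0)$ to a bona fide continuous surjection onto a neighborhood of the origin, then build the (possibly discontinuous) symbol $h$ by picking any right inverse of that surjection and pre-composing with a known exponentially stable vector field such as $g(x)=-x$.

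First I would apply Lemma~\Rref{surjective restriction} with $X=\xR^n\times\xR^m$, $Y=\xR^n$, $F=f$ and $x_0=(0,0)$. Both spaces are locally compact Hausdorff, $f$ is continuous with $f(0,0)=0$, and by Brockett's openness hypothesis $f$ is open at $(0,0)$, so the lemma yields compact neighborhoods $X_0\subseteq\xR^n\times\xR^m$ of $(0,0)$ and $Y_0\subseteq\xR^n$ of $0$ together with a (closed quotient) continuous surjection $\tilde{f}\colon X_0\to Y_0$ satisfying $\iota\circ\tilde{f}=\restr{f}{X_0}$. Since $\tilde{f}$ is surjective I would invoke the axiom of choice to pick a right inverse $\alpha\colon Y_0\to X_0$, i.e.\ any map with $\tilde{f}\circ\alpha=\textrm{id}_{Y_0}$; by redefining $\alpha$ at the single point $0$ if necessary (which is legal because $(0,0)\in\tilde{f}^{-1}(0)$) I may assume $\alpha(0)=(0,0)$. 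No continuity of $\alpha$ is claimed, and none is needed.

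Next I would fix a globally exponentially stable continuous vector field on $\xR^n$, say $g(x):=-x$, and pick an open neighborhood $\mathcal{O}\subseteq\xR^n$ of the origin with $g(\mathcal{O})\subseteq Y_0$ (possible by continuity of $g$ and $g(0)=0$). Define the symbol
$$
h\colon \mathcal{O}\longrightarrow \xR^n\times\xR^m,\qquad h(x):=\alpha\bigl(g(x)\bigr)=\alpha(-x).
$$
Then $h(0)=\alpha(0)=(0,0)$, so $h$ is stationary, and for every $x\in\mathcal{O}$
$$
(T_hf)(x)=f\bigl(h(x)\bigr)=f\bigl(\alpha(-x)\bigr)=\iota\bigl(\tilde{f}(\alpha(-x))\bigr)=-x.
$$
Hence the closed-loop system is $\dot{x}=-x$, whose origin is globally (and in particular locally) exponentially stable, with $T_hf$ continuous and unique solutions trivially available, so Criterion~\Rref{uniqueness criterion for composition operators} is satisfied.

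There is essentially no hard step here: all the genuine content has been isolated in Lemma~\Rref{surjective restriction}. The only place one might worry is the existence of the right inverse $\alpha$, but surjectivity of $\tilde{f}$ makes this immediate via choice, and the fact that $h=\alpha\circ g$ may be discontinuous is precisely why the theorem is stated for \emph{stationary} (rather than continuous) symbols. The closed-loop composite $T_hf=g$ is itself smooth even though $h$ need not be, which is the whole point of measuring regularity of the symbol separately from regularity of the closed-loop vector field.
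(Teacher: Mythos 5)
Your proposal is correct and follows essentially the same route as the paper: apply Lemma~\Rref{surjective restriction} to extract a continuous surjection $\tilde{f}\colon X_0\to Y_0$ from Brockett's openness at the origin, choose any (set-theoretic) right inverse $\alpha$ normalized so that $\alpha(0)=(0,0)$, and pre-compose with an exponentially stable vector field so that $T_hf=g$. The only cosmetic differences are that you fix $g(x)=-x$ where the paper keeps $g$ arbitrary, and you invoke the compact/closed-quotient part of the lemma where the plain surjection suffices; neither affects the argument.
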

    \begin{proof}
    If \eqref{control sys} satisfies the necessary condition of Brockett's theorem, then $f$ is open at $(0,0)$.  As such, Lemma~\Rref{surjective restriction} implies that there exists a neighborhood of the origin $X_0 \subseteq \mathcal{X} \times \mathcal{U}$ and a neighborhood of the origin $Y_0 \subseteq \xR^n$ such that the mapping $\tilde{f} \colon X_0 \rightarrow Y_0$, defined by $\iota_1 \circ \tilde{f} = \restr{f}{X_0}$ via the inclusion map $\iota_1:Y_0 \rightarrow \xR^n$, is a surjection.  Hence, $\tilde{f}$ has a right inverse $\alpha \colon Y_0 \rightarrow X_0$, and as $\restr{f}{X_0}(0,0)=0$, we may certainly choose this right inverse so that it satisfies $\alpha(0)=(0,0)$.  Take any system $\dot{x}=g(x)$ for which the origin is a locally exponentially stable equilibrium and $g$ is continuous.  Let $\tilde{g}:\mathcal{N} \rightarrow Y_0$ be such that $\iota_1 \circ \tilde{g} = \restr{g}{\mathcal{N}}$ for some neighborhood of the origin $\mathcal{N}$ (i.e. so that $g(\mathcal{N}) \subseteq Y_0$, as is always possible by continuity).  Set $h=\iota_2 \circ \alpha \circ \tilde{g}$, where $\iota_2:X_0 \rightarrow X$ is the inclusion map, and notice that, since $\iota_1 \circ \tilde{f} = \restr{f}{X_0}$ and $f \circ \iota_2 = \restr{f}{X_0}$, we have 
    \begin{align*}
    T_hf &= f \circ \iota_2 \circ \alpha \circ \tilde{g} \\
    &= \restr{f}{X_0} \circ \alpha \circ \tilde{g} \\
    &= \iota_1 \circ \tilde{f} \circ \alpha \circ \tilde{g} = \iota_1 \circ \tilde{g} = \restr{g}{\mathcal{N}}.    
    \end{align*}
    Since $\dot{x}=g(x)$ has the origin as a locally exponentially stable equilibrium and satisfies $g(0)=0$, so does $\dot{x}=\restr{g}{\mathcal{N}}(x)$.  It then follows that $T_h$ is a composition operator with a stationary symbol which locally exponentially stabilizes \eqref{control sys}.
    \end{proof}
    To reiterate, we stress that there is \textit{absolutely no assurance of continuity} for the symbol $h$ of the stabilizing composition operator $T_h$.  Instead, Theorem~\Rref{discontinuous composition stabilizability theorem} simply highlights the fact that, while stabilization by composition operators is remarkably easy to obtain in general (on a theoretical level, at least), the real trick is obtaining \textit{continuity} for the stabilizing composition operator's symbol.  We will proceed towards this end with some more characterizations of systems satisfying Brockett's condition and the potential continuity of their restrictions' right inverses.
    
    Recall now that, given a mapping $F \colon  X \rightarrow Y$, we say a subset $S\subseteq X$ is \textit{$F$-saturated} if $S = F^{-1}(F(S))$.  As illustrated by the following lemma (the proof of which can be found in, e.g., \cite[Lemma 2.3]{nlab:saturated_subset}), $F$-saturated subsets of closed mappings behave nicely.
    \begin{lmm}\label{saturated subset lemma}
    Let $X$ and $Y$ be topological spaces, let $F \colon X \rightarrow Y$ be a closed map, and assume $\mathcal{C} \subseteq X$ is closed and $F$-saturated.  Then, for every open set $\mathcal{O} \subseteq X$ such that $\mathcal{C} \subseteq \mathcal{O}$, there exists an $F$-saturated open set $\mathcal{V} \subseteq X$ such that $\mathcal{C} \subseteq \mathcal{V} \subseteq \mathcal{O}$.
    \end{lmm}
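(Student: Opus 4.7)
The plan is to build the desired open saturated set $\mathcal{V}$ by removing from $X$ exactly the preimages of the ``bad'' $F$-values coming from outside $\mathcal{O}$. Concretely, I would start from the closed set $X\setminus\mathcal{O}$, apply the closedness of $F$ to conclude that $F(X\setminus\mathcal{O})\subseteq Y$ is closed, and then define
\[
    \mathcal{V} := X\setminus F^{-1}\bigl(F(X\setminus\mathcal{O})\bigr).
\]
Equivalently, $\mathcal{V}=F^{-1}\bigl(Y\setminus F(X\setminus\mathcal{O})\bigr)$, which is a preimage of an open set under a continuous map (continuity of $F$ is implicit from the surrounding context, in which all maps between topological spaces are taken continuous), and hence open.

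Next I would verify the three required properties in turn. For $F$-saturation: any set of the form $F^{-1}(U)$ is automatically $F$-saturated, since $F^{-1}(F(F^{-1}(U)))=F^{-1}(U\cap F(X))=F^{-1}(U)$; so $\mathcal{V}$ is $F$-saturated by construction. The containment $\mathcal{V}\subseteq\mathcal{O}$ is a direct contrapositive: if $x\in X\setminus\mathcal{O}$ then $F(x)\in F(X\setminus\mathcal{O})$, so $x\in F^{-1}(F(X\setminus\mathcal{O}))$ and $x\notin\mathcal{V}$.

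The only point that requires the hypothesis that $\mathcal{C}$ is $F$-saturated is the containment $\mathcal{C}\subseteq\mathcal{V}$, which I expect to be the main (and only real) subtlety. I would argue by contradiction: if $y\in F(\mathcal{C})\cap F(X\setminus\mathcal{O})$, pick $x'\in X\setminus\mathcal{O}$ with $F(x')=y\in F(\mathcal{C})$; then $x'\in F^{-1}(F(\mathcal{C}))=\mathcal{C}$ by the $F$-saturation of $\mathcal{C}$, contradicting $\mathcal{C}\subseteq\mathcal{O}$. Hence $F(\mathcal{C})\cap F(X\setminus\mathcal{O})=\emptyset$, i.e.\ $F(\mathcal{C})\subseteq Y\setminus F(X\setminus\mathcal{O})$, and applying $F^{-1}$ together with $\mathcal{C}=F^{-1}(F(\mathcal{C}))$ yields $\mathcal{C}\subseteq\mathcal{V}$. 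Combining the three bullet points produces the desired sandwich $\mathcal{C}\subseteq\mathcal{V}\subseteq\mathcal{O}$ with $\mathcal{V}$ open and $F$-saturated, completing the proof.
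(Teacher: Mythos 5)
Your proof is correct and is essentially the standard argument the paper itself relies on (the paper gives no proof of its own, deferring to \cite{nlab:saturated_subset}): set $\mathcal{V}=F^{-1}\bigl(Y\setminus F(X\setminus\mathcal{O})\bigr)$, note it is saturated as a preimage, and verify the two inclusions exactly as you do. The only caveat is that openness of $\mathcal{V}$ requires continuity of $F$, which the lemma's hypotheses do not literally state; you flag this correctly, and it is harmless since the lemma is only ever applied to the continuous closed quotient map produced by Lemma~\ref{surjective restriction}.
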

    This immediately leads to the following result:
    \begin{lmm}\label{right inverse continuous at zero lemma}
    Let $X$ and $Y$ be locally compact and let $Y$ be Hausdorff.  Suppose that $F \colon X \rightarrow Y$ is continuous, $F(x_0)=y_0$, and $F$ is open at $x_0 \in X$.  Then, if $F(x) = y_0$ has only the trivial solution $x = x_0$ for all $x$ in some sufficiently small neighborhood of $x_0$, any right inverse of the closed quotient map $\tilde{F} \colon X_0 \rightarrow Y_0$ (guaranteed to exist by Lemma~\Rref{surjective restriction}) is continuous at $y_0$ whenever $X_0$ and $Y_0$ are chosen to be sufficiently small.
    \end{lmm}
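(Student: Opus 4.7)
The plan is to exploit two simultaneous consequences of Lemma~\Rref{surjective restriction} in the locally compact Hausdorff setting: the restriction $\tilde{F} \colon X_0 \to Y_0$ can be arranged to be a \emph{closed} continuous surjection, and $X_0$ can be chosen to be a compact neighborhood of $x_0$ \emph{as small as we wish}. First I would shrink $X_0$ so that it sits inside the neighborhood $U$ of $x_0$ supplied by the hypothesis, on which $F(x) = y_0$ has only the trivial solution $x = x_0$. Then $\tilde{F}^{-1}(y_0) = \{x_0\}$, and consequently any right inverse $\sigma \colon Y_0 \to X_0$ of $\tilde{F}$ must satisfy $\sigma(y_0) = x_0$, regardless of which right inverse is chosen. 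This already eliminates the quantifier over right inverses — continuity of $\sigma$ at $y_0$ will follow from a property of $\tilde{F}$ alone.

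With that setup, verifying continuity at $y_0$ reduces to a clean complement argument. Given an arbitrary open neighborhood $V \subseteq X_0$ of $\sigma(y_0) = x_0$, the complement $X_0 \setminus V$ is closed in $X_0$, and by closedness of $\tilde{F}$ its image $\tilde{F}(X_0 \setminus V)$ is closed in $Y_0$. Because $\tilde{F}^{-1}(y_0) = \{x_0\} \subseteq V$, this closed image avoids $y_0$, so $W := Y_0 \setminus \tilde{F}(X_0 \setminus V)$ is an open neighborhood of $y_0$ in $Y_0$. Finally, for any $y \in W$, if $\sigma(y) \notin V$ then $\sigma(y) \in X_0 \setminus V$, giving $y = \tilde{F}(\sigma(y)) \in \tilde{F}(X_0 \setminus V)$, which contradicts $y \in W$. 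Therefore $\sigma(W) \subseteq V$, proving continuity at $y_0$.

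The only genuine obstacle is arranging, at the outset, that the compact restriction $\tilde{F}$ is \emph{simultaneously} closed and has $y_0$ as an isolated fiber. The first is precisely the content of the second half of Lemma~\Rref{surjective restriction}, while the second is obtained for free by further shrinking $X_0$ inside $U$ and then taking $Y_0 \subseteq \tilde{F}(X_0)$ as in that lemma's construction — both operations preserve the compact, closed-quotient structure. Note that Lemma~\Rref{saturated subset lemma} on saturated subsets is not strictly required for this direct argument, but its spirit (using closedness of $\tilde{F}$ to transport the separation from $\{x_0\}$ to a neighborhood of $y_0$) is exactly the mechanism being exploited.
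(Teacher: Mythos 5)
Your proof is correct and takes essentially the same route as the paper: both arguments rest on the closedness of $\tilde{F}$ combined with the singleton fiber $\tilde{F}^{-1}(y_0)=\{x_0\}$ obtained by shrinking $X_0$ into the neighborhood where $F(x)=y_0$ has only the trivial solution. The only difference is that you inline the complement construction $W = Y_0\setminus\tilde{F}(X_0\setminus V)$ that the paper packages as Lemma~\ref{saturated subset lemma}, which also lets you bypass the paper's final appeal to the openness of $\tilde{F}$ at $x_0$, since your $W$ is open by construction.
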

    \begin{proof}
    By Lemma~\Rref{surjective restriction}, the existence of the closed quotient map follows immediately from our assumptions.  Noting that if $F(x) = y_0$ has only the trivial solution $x = x_0$ for all $x$ in some sufficiently small neighborhood of $X_0$, suppose we have chosen $X_0$ and $Y_0$ such that $\tilde{F}(x)=y_0$ has only the solution $x=x_0$ (as is always possible, viz. the proof of Lemma~\Rref{surjective restriction}).  Then, the subset $\mathcal{C} = \left\{x_0 \right\} \subseteq X_0$ satisfies 
    $$
    \tilde{F}^{-1}\big(\tilde{F}(\mathcal{C})\big) = \tilde{F}^{-1}\big(\tilde{F}(x_0)\big) = \tilde{F}^{-1}(y_0)=x_0
    $$ 
    and is therefore $\tilde{F}$-saturated.  Since $\tilde{F}$ is a surjection, there exists a (potentially non-unique) right inverse $\alpha \colon Y_0 \rightarrow X_0$ of $\tilde{F}$.  Taking $\mathcal{O}$ to be any neighborhood of $x_0$, notice that $\mathcal{O}$ must necessarily contain an open set containing $\mathcal{C}$.  Hence, by Lemma~\Rref{saturated subset lemma}, there exists an open neighborhood $\mathcal{N}$ of $x_0$ such that $\mathcal{N}$ is $\tilde{F}$-saturated and $\mathcal{C}\subseteq \mathcal{N} \subseteq \mathcal{O}$.  So, since $\mathcal{O} \supset \mathcal{N}$ and $\tilde{F}^{-1}\big(\tilde{F}(\mathcal{N})\big) = \mathcal{N}$, we have 
    $$
    \alpha^{-1}(\mathcal{O}) \supseteq \alpha^{-1}(\mathcal{N}) = \alpha^{-1}\big(\tilde{F}^{-1}\big(\tilde{F}(\mathcal{N})\big)\big) = (\tilde{F}\circ \alpha)^{-1}\big(\tilde{F}(\mathcal{N})\big) = (\textrm{id}_{Y_0})^{-1}\big(\tilde{F}(\mathcal{N})\big) = \tilde{F}(\mathcal{N}).
    $$
    By openness, $\tilde{F}(\mathcal{N})$ contains a neighborhood of $y_0$, and therefore $\alpha^{-1}(\mathcal{O})$ must as well for any neighborhood $\mathcal{O}$ of $\alpha(y_0)=x_0$.  That is, any right inverse $\alpha$ of $\tilde{F}$ is continuous at $y_0$.
    \end{proof}
    Essentially, Lemma~\Rref{right inverse continuous at zero lemma} just tells us that systems \eqref{control sys}, which satisfy Brockett's necessary condition and have the origin as an isolated equilibrium, \textit{automatically} have (viz. Theorem~\Rref{discontinuous composition stabilizability theorem}) locally asymptotically (and exponentially) stabilizing composition operators with symbols that are stationary and continuous at the origin.  This will lead nicely into the results of the following subsection.
\subsection{Brockett's Theorem in the Class of Composition Operators}\label{brocketts theorem for comp ops subsection}
    The next result provides a natural extension of Brockett's theorem to the class of stabilizing composition operators with continuous symbols.
    \begin{thrm}[Brockett's Theorem for Composition Stabilizability]\label{comp brockett}
    If the system \eqref{control sys} is locally asymptotically stabilizable by means of a composition operator with a continuous symbol, then it is necessary that the vector field $f(x,u)$ is open at $(0,0)$.
    \end{thrm}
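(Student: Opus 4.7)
The plan is to reduce the statement to the continuous-feedback version of Brockett's theorem (Theorem~\Rref{brocketts - continuous extension}) by the same ``trivialize the state dependence'' trick used in the proof of Corollary~\Rref{Hautus for comp stab}. Specifically, write $w = (x,u) \in \xR^n \times \xR^m$ and introduce the auxiliary control system
$$
\dot{y} = F(y,w) := f(w), \qquad y \in \xR^n,
$$
in which the state variable $y$ does not enter the right-hand side. For any mapping $h \colon \mathcal{O} \rightarrow \mathcal{X}\times\mathcal{U}$, the closed-loop dynamics $\dot{y} = F(y,h(y)) = f(h(y))$ is literally the same ordinary differential equation as $\dot{x} = (T_hf)(x) = f(h(x))$. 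Thus local asymptotic stability of the origin of $\dot{x} = (T_hf)(x)$ is equivalent to local asymptotic stability of the origin of $\dot{y} = F(y,h(y))$, and the uniqueness hypothesis of Criterion~\Rref{uniqueness criterion for composition operators} translates verbatim into the uniqueness hypothesis of Criterion~\Rref{uniqueness criterion} for the feedback $h$ applied to $F$. Finally, if $h$ is a continuous symbol, then $h$ is a continuous stationary feedback for the auxiliary system.

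With this reduction in hand, the hypothesis of the theorem says exactly that $\dot{y} = F(y,w)$ is locally asymptotically stabilizable by a continuous stationary feedback law satisfying Criterion~\Rref{uniqueness criterion}. Applying Theorem~\Rref{brocketts - continuous extension} to the auxiliary system yields that $F$ is open at $(0,0) \in \xR^n \times (\xR^n \times \xR^m)$.

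It remains to transfer this openness from $F$ back to $f$. This is the only step that is not purely formal, but it is short. Let $\mathcal{N} \subseteq \xR^n \times \xR^m$ be any neighborhood of the origin and pick any neighborhood $\mathcal{Y} \subseteq \xR^n$ of the origin; then $\mathcal{Y} \times \mathcal{N}$ is a neighborhood of $(0,0)$ in the domain of $F$, and by construction
$$
F(\mathcal{Y} \times \mathcal{N}) = f(\mathcal{N}).
$$
The openness of $F$ at $(0,0)$ therefore gives $0 = F(0,0) \in \textrm{int}\, F(\mathcal{Y} \times \mathcal{N}) = \textrm{int}\, f(\mathcal{N})$, which is exactly the statement that $f$ is open at $(0,0)$ in the sense of Definition~\Rref{open}, completing the proof. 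The only place where care is needed is in confirming that the continuity and uniqueness hypotheses on the composition operator translate cleanly into the hypotheses required by Theorem~\Rref{brocketts - continuous extension}; once this bookkeeping is verified, the conclusion is immediate.
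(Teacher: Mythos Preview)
Your proof is correct and follows essentially the same route as the paper's: introduce the auxiliary system $\dot{y}=F(y,w):=f(w)$ with trivial state dependence, observe that composition stabilization of \eqref{control sys} is exactly feedback stabilization of this auxiliary system, apply Theorem~\Rref{brocketts - continuous extension}, and read off openness of $f$ from $F(\mathcal{Y}\times\mathcal{N})=f(\mathcal{N})$. Your write-up is somewhat more explicit than the paper's about the transfer of the uniqueness criterion and the final openness identity, but the argument is the same.
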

    \begin{proof}
    Let $y \in \xR^n$ and write $w:=(x,u)\in \xR^n \times \xR^m$.  Set $\mathcal{W}:=\mathcal{X} \times \mathcal{U}$ and choose a neighborhood of the origin $\mathcal{Y}\subseteq \xR^n$. Let $F \colon \mathcal{Y}\times\mathcal{W} \rightarrow \xR^n$ be given by $F(y,w):=f(w)$; so $F$ is independent of $y$. If the system \eqref{control sys} is locally asymptotically stabilizable by means of a composition operator $T_h$ with a continuous symbol $h(\cdot)$, then the one in $\dot{y}=F(y,w)$ is locally asymptotically stabilizable by means of continuous feedback laws, namely $w(y):=h(y)$. Applying Theorem~\Rref{brocketts - continuous extension} tells us it must be the case that $0 \in \textrm{int}F(\mathcal{Y}_0 \times \mathcal{W}_0)=\textrm{int}f(\mathcal{W}_0)$ for any neighborhood of the origin $\mathcal{Y}_0 \times \mathcal{W}_0 \subseteq \mathcal{Y} \times \mathcal{W}$.  Since $\mathcal{W}_0$ is an arbitrary neighborhood of the origin in $\mathcal{X}\times\mathcal{U}$, we justify the result.
    \end{proof}
    The fact that Brockett's theorem holds when we treat \textit{everything}--that is, both the state variable $x$ and the control variable $u$ in the standard setting--as a control gives us a good perspective to understand why Brockett's condition is necessary for stabilizability by feedback laws, but far from sufficient. Specifically, straightforward cases like that of Example~\Rref{composition stabilizability example} demonstrate that stabilizability by a composition operator is a much weaker property than stabilizability by a feedback law. Yet Brockett's condition is necessary in either case.
    
    As it turns out, Brockett's condition remains necessary under even weaker conditions.  The following theorem presents one such situation.
    \begin{thrm}[Brockett's Theorem for Composition Stabilizability - Continuity at Zero]\label{brockett's necessary and sufficient theorem}
    If the system \eqref{control sys} is locally asymptotically stabilizable by means of a composition operator $T_h$ with a stationary symbol which is continuous at the origin, then it is necessary that the vector field $f$ in \eqref{control sys} satisfies Brockett's condition.
    \end{thrm}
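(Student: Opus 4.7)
The plan is to decouple the argument into two steps: first extract Brockett's openness condition for the closed-loop vector field $T_h f$ at $0 \in \xR^n$, and then transport this openness back to the original $f$ at $(0,0) \in \mathcal{X} \times \mathcal{U}$ using only the continuity of $h$ at the origin. This separation is precisely what will allow us to weaken the full-continuity hypothesis of Theorem~\Rref{comp brockett} to continuity at a single point.

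For the first step, I would introduce a trivial auxiliary control system in the spirit of the proof of Theorem~\Rref{comp brockett}. Fix any open neighborhood $\mathcal{V} \subseteq \xR^m$ of the origin and define $F \colon \mathcal{X} \times \mathcal{V} \rightarrow \xR^n$ by $F(x, v) := T_h f(x)$, independent of $v$. By Criterion~\Rref{uniqueness criterion for composition operators}, $T_h f$ is continuous, hence so is $F$. The auxiliary system $\dot{x} = F(x, v)$ is then locally asymptotically stabilized by the continuous stationary feedback $v(x) \equiv 0$; the resulting closed-loop system is just $\dot{x} = T_h f(x)$, so Criterion~\Rref{uniqueness criterion} is automatically inherited from Criterion~\Rref{uniqueness criterion for composition operators}. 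Applying Theorem~\Rref{brocketts - continuous extension} to this auxiliary system yields that $F$ is open at $(0,0)$, and since $F$ is independent of $v$, this is equivalent to saying that $T_h f$ is open at $0 \in \mathcal{X}$.

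For the second step, let $\mathcal{W} \subseteq \mathcal{X} \times \mathcal{U}$ be any neighborhood of $(0,0)$. By the stationarity $h(0) = (0,0)$ and the continuity of $h$ at the origin, there exists a neighborhood $\mathcal{O} \subseteq \mathcal{X}$ of $0$ with $h(\mathcal{O}) \subseteq \mathcal{W}$. By the first step, $T_h f(\mathcal{O}) = f(h(\mathcal{O}))$ contains a neighborhood of $0 \in \xR^n$, and since $f(h(\mathcal{O})) \subseteq f(\mathcal{W})$, the set $f(\mathcal{W})$ must contain a neighborhood of $0$ as well. Because $\mathcal{W}$ was an arbitrary neighborhood of $(0,0)$, this exhibits $f$ as open at $(0,0)$.

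The only technical point requiring care is verifying that the trivially-parameterized auxiliary system in the first step legitimately satisfies the hypotheses of Theorem~\Rref{brocketts - continuous extension}, in particular the uniqueness requirement for trajectories of the closed loop. This is immediate here because the closed-loop vector field literally equals $T_h f$, for which uniqueness was assumed at the outset. Beyond this bookkeeping, the remainder of the argument is essentially the observation that openness of the outer map $f$ at $(0,0)$ follows from openness of $T_h f$ at $0$ by pulling back the latter through the continuous-at-zero inner map $h$.
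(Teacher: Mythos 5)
Your proposal is correct and follows essentially the same route as the paper: the paper first deduces that $T_hf$ is open at the origin (by viewing $\dot{x}=T_hf(x)$ as stabilized by the identity composition operator and citing Theorem~\Rref{comp brockett}, whereas you inline that same reduction by adjoining a dummy control variable and invoking Theorem~\Rref{brocketts - continuous extension} directly), and then transfers this openness to $f$ at $(0,0)$ via the stationarity and continuity of $h$ at the origin exactly as you do in your second step. No gaps.
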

    \begin{proof}
    If \eqref{control sys}  is locally asymptotically stabilizable by means of a composition operator $T_h$ with a stationary symbol $h$ which is continuous at the origin and is such that $T_hf$ is continuous, then the system $\dot{x}=T_hf(x)$ is locally asymptotically stabilizable by the composition operator $T_x$, which certainly has a continuous stationary symbol.  Hence, by Theorem~\Rref{comp brockett}, $T_hf$ must be open.  Since $h$ is assumed to be stationary and continuous at zero, it follows that for every neighborhood $\mathcal{V}$ of $h(0)=(0,0)$, there exists a neighborhood $\mathcal{O}$ of $0$ such that $h(\mathcal{O}) \subseteq \mathcal{V}$.  Correspondingly, by the openness of $T_hf = f \circ h$, there exists a neighborhood $\mathcal{N}$ of $0$ such that $\mathcal{N} \subseteq T_hf(\mathcal{O}) = f\big(h(\mathcal{O})\big) \subseteq f(\mathcal{V})$.  Therefore, $f$ is open at the origin.
    \end{proof}
    \begin{xmpl}
    Consider Brockett's famous integrator, the system \eqref{control sys} with $f \in \xCone(\xR^3\times\xR^2,\xR^3)$ given by
    $$
        f(x,u) = \begin{bmatrix} 
        u_1 \\
        u_2\\
        x_1u_2 - x_2u_1
        \end{bmatrix}.
    $$ 
    It is easy to see that $f$ fails to satisfy Brockett's condition as stated in Theorem~\Rref{brocketts}, as the $f$-image of any neighborhood of the origin fails to contain points of the form $(0,0,\epsilon)$ for $\epsilon \neq 0$.  Consequently, there can be no continuous stationary feedback law stabilizing this system.  In light of Theorem~\ref{brockett's necessary and sufficient theorem}, this deficiency can be extended and is, in fact, even stronger--not only can there be no continuous stationary stabilizing feedback, this continuity is impossible to achieve even just at the equilibrium.  Moreover, neither of these deficiencies can be remedied by moving to the larger context of stabilizing composition operators.
    \end{xmpl}
    After some consideration, Theorem~\Rref{brockett's necessary and sufficient theorem} is relatively natural, as openness at the origin is a local property and continuity at the origin is the local analogue to continuity.  In light of Theorem~\Rref{brockett's necessary and sufficient theorem}, Lemma~\Rref{right inverse continuous at zero lemma}, and Theorem~\Rref{discontinuous composition stabilizability theorem}, we pose the following conjecture:
    \begin{cnjctr}\label{brockett's sufficient conjecture}
    If the vector field $f$ in \eqref{control sys} satisfies Brockett's condition, then \eqref{control sys} is locally exponentially stabilizable by means of a composition operator $T_h$ with a stationary symbol which is continuous at the origin.
    \end{cnjctr}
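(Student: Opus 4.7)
The plan is to imitate the reverse direction of Theorem~\Rref{hautus for comp stab implies continuous right inverse}, but using only pointwise continuity of the section at the origin rather than global continuity. Specifically, I would show that it suffices to produce a stationary right inverse $\alpha$ of $f$, defined on some ball $B_{r_1}(0)\subseteq\xR^n$, that is continuous only at $0$; then, setting $\tilde g(x):=-x$ on a neighborhood $\mathcal N$ of the origin small enough that $-x\in B_{r_1}(0)$, and letting $h:=\alpha\circ\tilde g$, we get $T_hf = f\circ\alpha\circ\tilde g = \tilde g$ on $\mathcal N$, so the closed-loop system $\dot x = T_hf(x) = -x$ is globally exponentially stable; also, $h$ is continuous at $0$ (as a composition of two maps continuous at the respective origins) and $h(0)=\alpha(0)=(0,0)$, so $h$ is stationary. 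In particular $T_hf$ is continuous on $\mathcal N$, so Criterion~\Rref{uniqueness criterion for composition operators} is met.

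The core of the argument is therefore the construction of $\alpha$. First I would fix the nested neighborhood base $V_n:=B_{1/n}(0,0)\subseteq\xR^n\times\xR^m$. By the Brockett openness hypothesis, each $f(V_n)$ is a neighborhood of $0\in\xR^n$, so there exists $r_n>0$ with $B_{r_n}(0)\subseteq f(V_n)$. Replacing $r_n$ by $\min(r_1,\ldots,r_n,1/n)$, I may assume $r_n\downarrow 0$. Next, for each $y\in B_{r_1}(0)\setminus\{0\}$, let $n(y):=\max\{n\geq 1 : y\in B_{r_n}(0)\}$, which is finite since $r_n\to 0$. Since $y\in f(V_{n(y)})$, the fiber $f^{-1}(y)\cap V_{n(y)}$ is non-empty, and I use the axiom of choice to pick one representative, calling it $\alpha(y)$; set $\alpha(0):=(0,0)$. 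Then $f\circ\alpha=\mathrm{id}$ on $B_{r_1}(0)$ by construction. For continuity at $0$: given $\varepsilon>0$, choose $N$ with $1/N<\varepsilon$; then $\|y\|<r_N$ forces $n(y)\geq N$, hence $\alpha(y)\in V_{n(y)}\subseteq V_N\subseteq B_\varepsilon(0,0)$. This gives the required pointwise continuity at the origin.

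The main conceptual obstacle is the tension between the generality of Brockett's openness condition and the need for a coherent selection from fibers. Unlike in Lemma~\Rref{right inverse continuous at zero lemma}, where an isolated equilibrium guarantees that \emph{every} right inverse is automatically continuous at $0$, here the fiber $f^{-1}(0)$ may accumulate near $(0,0)$ and a careless selection will fail to converge. The construction above side-steps this by explicitly controlling which fiber element is chosen, shrinking along the neighborhood base $\{V_n\}$ as $y\to 0$. This trades the strong global structure demanded in Subsection~\Rref{Brockett's Condition and Local Quotient Maps subsection} (closed quotient maps, saturated sets, local compactness) for a much weaker, purely metric argument in the codomain; it leverages only that $\xR^n$ is first-countable at $0$ and that $\xR^n\times\xR^m$ has shrinking neighborhood bases. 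One should verify that no implicit continuity of $h$ \emph{away} from the origin is needed for the conclusion, which is exactly what the composition $T_hf=\tilde g$ provides: even though $h$ itself may be wildly discontinuous at every $x\neq 0$, the composition $f\circ h$ is the linear map $-x$ and thus perfectly smooth, satisfying Criterion~\Rref{uniqueness criterion for composition operators}. This simultaneously proves the conjecture and gives the slightly stronger statement that the stabilizing dynamics may be taken to be globally linear.
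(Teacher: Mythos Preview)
The paper does not prove this statement; it is explicitly posed as an open conjecture, with only the partial result Corollary~\ref{isolated equilibrium - brockett's necessary and sufficient theorem} established under the additional hypothesis that the origin is an isolated equilibrium of \eqref{control sys}. So there is no ``paper's own proof'' to compare against.

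Your argument, however, appears to resolve the conjecture in full, and is correct. The construction of a stationary right inverse $\alpha$ that is continuous at the origin via the nested neighborhood base $\{V_n\}$ works: openness of $f$ at $(0,0)$ yields $B_{r_n}(0)\subseteq f(V_n)$; the monotonization of $(r_n)$ is harmless since it only shrinks the balls; $n(y)$ is well-defined for $y\neq 0$ because $r_n\to 0$; and the $\varepsilon$--$\delta$ verification of continuity at $0$ is valid. Composing with $\tilde g(x)=-x$ then gives $T_hf(x)=-x$ on $\mathcal{N}$, which is smooth with unique trajectories, so Criterion~\ref{uniqueness criterion for composition operators} is met; and $h=\alpha\circ\tilde g$ is stationary and continuous at $0$ as the composition of two maps continuous at the respective origins. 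The only cosmetic gap is that you should start the base $\{V_n\}$ at an index large enough that $V_n\subseteq\mathcal{X}\times\mathcal{U}$, so that $\alpha$ lands in the domain of $f$; this is trivial since $\mathcal{X}\times\mathcal{U}$ is a neighborhood of $(0,0)$.

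Compared with the paper's partial route through closed quotient maps, the Closed Map Lemma, and $\tilde F$-saturated subsets (Lemmas~\ref{surjective restriction}--\ref{right inverse continuous at zero lemma}), which under the isolated-equilibrium hypothesis shows that \emph{every} right inverse of $\tilde f$ is continuous at $0$, your argument is both more elementary and strictly stronger: rather than proving that arbitrary selections inherit continuity at $0$, you simply \emph{construct} one that does, using only first-countability of $\xR^n$. The authors' stated worry about possible pathologies (e.g.\ acyclicity of the fiber $f^{-1}(0)$) seems to stem from thinking about Michael-type selection theorems that produce \emph{globally} continuous sections; as your proof makes clear, continuity at a single point requires no such structural hypotheses.
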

    If Conjecture~\Rref{brockett's sufficient conjecture} is true, then Brockett's condition is (by Theorem~\Rref{discontinuous composition stabilizability theorem}) necessary and sufficient for stabilizability by a composition operator with a stationary symbol that is continuous at the origin.  We suspect that this could possibly be the case, but can also imagine that there may be pathologies preventing such a neat characterization (e.g. it might be necessary to impose that the fiber of $f$ over the origin to be acyclic as well, or some similarly nuanced condition).  In support of this conjecture, however, we do have the following immediate result via Theorem~\Rref{brockett's necessary and sufficient theorem} and Theorem~\Rref{right inverse continuous at zero lemma}:
    \begin{crllr}[Brockett's Theorem for Composition Stabilizability - Continuity at Zero]\label{isolated equilibrium - brockett's necessary and sufficient theorem}
    Suppose the system \eqref{control sys} has the origin as an isolated equilibrium.  Then, \eqref{control sys} is locally asymptotically stabilizable by means of a composition operator $T_h$ with a stationary symbol which is continuous at the origin if and only if the vector field $f$ in \eqref{control sys} satisfies Brockett's condition.
    \end{crllr}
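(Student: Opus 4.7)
The plan is to treat the two directions separately, leveraging the results that precede this corollary so that almost no new work is needed. The forward direction is essentially free: if \eqref{control sys} is locally asymptotically stabilizable by a composition operator $T_h$ with a stationary symbol that is continuous at the origin, then Theorem~\Rref{brockett's necessary and sufficient theorem} immediately yields that $f$ is open at $(0,0)$. Note that the isolated-equilibrium hypothesis is not required here; it only becomes essential in the converse.

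For the reverse direction, I would combine Lemma~\Rref{surjective restriction}, Lemma~\Rref{right inverse continuous at zero lemma}, and the construction used in Theorem~\Rref{discontinuous composition stabilizability theorem}. First, assuming $f$ is open at $(0,0)$, I would invoke Lemma~\Rref{surjective restriction} with $X=\mathcal{X}\times\mathcal{U}$ and $Y=\xR^n$ (both locally compact and Hausdorff) to obtain compact neighborhoods $X_0\subseteq\mathcal{X}\times\mathcal{U}$ of $(0,0)$ and $Y_0\subseteq\xR^n$ of $0$, together with a closed quotient map $\tilde{f}\colon X_0\rightarrow Y_0$ satisfying $\iota_1\circ\tilde{f}=\restr{f}{X_0}$. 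Because the origin is an isolated equilibrium of \eqref{control sys}, the equation $f(w)=0$ has only the trivial solution $w=(0,0)$ in some neighborhood of $(0,0)$; shrinking $X_0$ if necessary, we may arrange $\tilde{f}^{-1}(0)=\{(0,0)\}$. Lemma~\Rref{right inverse continuous at zero lemma} then supplies a right inverse $\alpha\colon Y_0\rightarrow X_0$ of $\tilde{f}$ with $\alpha(0)=(0,0)$ that is continuous at $0$ (any right inverse sending $0$ to $(0,0)$ will do).

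Next, mimicking the construction in the proof of Theorem~\Rref{discontinuous composition stabilizability theorem}, I would pick any continuous vector field $g$ with a locally exponentially stable equilibrium at the origin (e.g., $g(x)=-x$), use continuity of $g$ to find a neighborhood $\mathcal{N}$ of the origin with $g(\mathcal{N})\subseteq Y_0$, factor $\restr{g}{\mathcal{N}}=\iota_1\circ\tilde{g}$, and set $h:=\iota_2\circ\alpha\circ\tilde{g}$ where $\iota_2\colon X_0\rightarrow\mathcal{X}\times\mathcal{U}$ is the inclusion. Exactly the computation in that earlier proof gives
\[
T_hf=f\circ\iota_2\circ\alpha\circ\tilde{g}=\iota_1\circ\tilde{f}\circ\alpha\circ\tilde{g}=\iota_1\circ\tilde{g}=\restr{g}{\mathcal{N}},
\]
so the closed-loop system is locally (exponentially, hence) asymptotically stable. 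The symbol $h$ is stationary since $\tilde{g}(0)=0$, $\alpha(0)=(0,0)$, and $\iota_2(0,0)=(0,0)$, and it is continuous at $0$ because $\tilde{g}$ is continuous everywhere, $\alpha$ is continuous at $0=\tilde{g}(0)$, and $\iota_2$ is continuous.

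The only real subtlety — and the step most likely to cause trouble — is guaranteeing that $\alpha$ can be chosen continuous at $0$. This is precisely where the isolated-equilibrium hypothesis is indispensable: without it, the fiber $\tilde{f}^{-1}(0)$ could contain points other than $(0,0)$, and a right inverse need not respect the topology near $0$ in $Y_0$. Once Lemma~\Rref{right inverse continuous at zero lemma} is applied to a suitably shrunken $X_0$, everything else is a matter of routine composition. I would keep the proof short, stating it as a direct combination of Theorem~\Rref{brockett's necessary and sufficient theorem} (necessity), Lemma~\Rref{surjective restriction} and Lemma~\Rref{right inverse continuous at zero lemma} (to produce the continuous-at-zero right inverse under the isolated equilibrium hypothesis), and the construction of Theorem~\Rref{discontinuous composition stabilizability theorem} (to assemble the stabilizing composition operator).
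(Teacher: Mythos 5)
Your proposal is correct and follows essentially the same route the paper intends: necessity via Theorem~\Rref{brockett's necessary and sufficient theorem}, and sufficiency by combining Lemma~\Rref{surjective restriction} and Lemma~\Rref{right inverse continuous at zero lemma} (where the isolated-equilibrium hypothesis guarantees the right inverse is continuous at the origin) with the construction from Theorem~\Rref{discontinuous composition stabilizability theorem}. The paper states this corollary as an immediate consequence of exactly these ingredients, so your more detailed write-up simply makes the intended argument explicit.
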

    Having now touched on Brockett's condition in the context of composition stabilization, we are now ready to proceed to composition stabilization for non-differentiable systems.
\section{Composition Stabilizability without Differentiability}
\label{a complete characterization of composition exponential stabilizability section}
    To proceed, we must develop a method to characterize stabilizability by a composition operator with a continuous, but not continuously differentiable, symbol.  To do this, it will be helpful first to state a much more general version of the implicit function theorem that allows us to deal with this non-differentiability a bit better. Such a version of the (generalized) implicit function theorem was obtained by Kumagai in \cite{kumagai}, which is a refinement of the origin result given by Jittorntrum in \cite{jittorntrum}.
    
    The major goal of this section is to answer this remaining question and solve the problem of exponential stabilization by means of composition operators.  In the process, we solve a portion of the problem of asymptotic stabilization by composition operators as well.  To do so, we will begin in Subsection~\Rref{kumagai jittorntrum and composition stabilizability subsection chapter 3} by applying the Kumagai-Jittorntrum implicit function theorem judiciously.  
\subsection{Composition Stabilizability via Implicit Functions}\label{kumagai jittorntrum and composition stabilizability subsection chapter 3}
    We start this section with formulating the aforementioned Kumagai-Jittorntrum theorem .
    
    \begin{thrm}[Kumagai-Jittorntrum Implicit Function Theorem]\label{kumagai jittorntrum implicit function theorem}
    Let $F \colon  \xR^k \times \xR^\ell \rightarrow \xR^k$ be a continuous mapping, and let $F(a_0,b_0)=0$.  Then the following statements are equivalent:
    \begin{enumerate}
    \item[\rm(i)] There exists open neighborhoods $A_0 \subseteq \xR^k$ and $B_0 \subseteq \xR^\ell$ of $a_0$ and $b_0$, respectively, such that for all $b \in B_0$ the equation $F(a,b)=0$ has a unique solution $a=w(b)$ for some continuous mapping $w \colon  B_0 \rightarrow A_0$.
    \item[\rm(ii)] There exist open neighborhoods $A \subseteq \xR^k$ and $B \subseteq \xR^\ell$ of $a_0$ and $b_0$, respectively, such that for all $b \in B$, the mappings $a \mapsto F(a,b)$ defined on $A$ are injective.
    \end{enumerate}
    \end{thrm}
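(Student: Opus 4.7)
My plan is to prove the two directions of Theorem~\Rref{kumagai jittorntrum implicit function theorem} separately. Direction (ii) $\Rightarrow$ (i) is the classical topological implicit function theorem, whose main tool is Brouwer's invariance-of-domain theorem. Direction (i) $\Rightarrow$ (ii) is substantially subtler because (i) asserts uniqueness of solutions only at the single level $F=0$, whereas (ii) demands injectivity of $a \mapsto F(a,b)$ on an entire neighborhood; this will be the main obstacle of the proof.

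For (ii) $\Rightarrow$ (i), I would begin by choosing a small open ball $A$ around $a_0$ lying inside the neighborhood on which $a \mapsto F(a,b)$ is injective for all $b$ close to $b_0$. Invariance of domain shows that $F(\cdot, b_0)|_{A}$ is an open embedding, so the compact set $F(\partial A, b_0)$ is bounded away from $0$ by some $\delta > 0$. Uniform continuity of $F$ on $\overline{A} \times \{b_0\}$ then yields a neighborhood $B$ of $b_0$ on which $|F(a,b)| \geq \delta/2$ for every $a \in \partial A$ and $b \in B$. Applying invariance of domain pointwise to each injective $F(\cdot, b)|_A$ forces its image to contain an open neighborhood of $F(a_0, b)$, which lies close to $0$ by continuity, producing a (unique, by (ii)) solution $w(b) \in A$ of $F(a,b)=0$. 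Continuity of $w$ at $b_0$ follows from compactness: any subsequential limit of $w(b_n)$ for $b_n \to b_0$ satisfies $F(\cdot, b_0) = 0$ in $\overline{A}$ and hence coincides with $a_0$.

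For (i) $\Rightarrow$ (ii), I would argue by contradiction. Assume (i) produces a continuous selection $w \colon B_0 \to A_0$, and suppose (ii) fails, so that after shrinking $A_0$ and $B_0$ if needed, for each $n$ there exist $b_n \to b_0$ and distinct $a_n \neq a_n'$ both converging to $a_0$ with $F(a_n, b_n) = F(a_n', b_n) =: c_n$. Uniqueness of zeros in (i) forces $c_n \neq 0$, yet $c_n \to 0$ by continuity of $F$. The strategy is to convert this pair of coincidences into a violation of the uniqueness part of (i). Since $F^{-1}(0) \cap (A_0 \times B_0)$ is exactly the graph of the continuous $w$, the Brouwer degree $d(b) := \deg(F(\cdot, b), A_0, 0)$ is well-defined (the map $F(\cdot, b)$ avoids $0$ on $\partial A_0$ once $A_0$ is small), constant in $b$ by homotopy invariance, and nonzero; uniqueness of the preimage of $0$ in fact forces $|d|=1$. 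Continuity of the Brouwer degree in its target point then gives $\deg(F(\cdot, b), A_0, c) = \pm 1$ for all $(b,c)$ in a neighborhood of $(b_0, 0)$, and combining this degree count with the coincidence $F(a_n, b_n) = F(a_n', b_n) = c_n$ allows one to build two distinct continuous local selections of $F(\cdot, b)^{-1}(c)$ for $c$ near $0$; specializing to $c = 0$ contradicts the uniqueness asserted by (i).

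The hard part will be the degree-theoretic heart of (i) $\Rightarrow$ (ii): verifying, in this purely continuous (non-differentiable) setting, that the Brouwer degree is well-defined and equal to $\pm 1$ when a unique continuous zero-selection exists, and then rigorously extracting two competing continuous local selections from the sequence of coincidences. This delicate step is exactly what distinguishes Kumagai's refinement from Jittorntrum's original one-directional implication.
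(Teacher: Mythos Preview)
The paper does not prove Theorem~\ref{kumagai jittorntrum implicit function theorem}; it merely quotes the result from the literature (Jittorntrum~\cite{jittorntrum} for the implication (ii)$\Rightarrow$(i) and Kumagai~\cite{kumagai} for the converse) and then invokes it as a black box, most notably in the proof of Lemma~\ref{cont stabilizing comp op gives continuous hyp 2}. There is therefore no argument in the paper against which to compare your proposal.

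On the merits of your sketch itself: the (ii)$\Rightarrow$(i) direction is the standard invariance-of-domain argument and is fine in outline, though your compactness step only yields continuity of $w$ at the single point $b_0$; you should note that the same reasoning, applied at each $b\in B_0$ once existence and uniqueness of $w(b)$ are known, gives continuity throughout.

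Your (i)$\Rightarrow$(ii) outline, however, contains a genuine gap. The claim that ``uniqueness of the preimage of $0$ in fact forces $|d|=1$'' is not a general fact about continuous maps: the complex squaring $a\mapsto a^2$ on $\xR^2$ has $0$ as its sole zero yet carries Brouwer degree $2$ about the origin on any small ball. So you cannot read off $|d|=1$ from (i) alone without, in effect, already assuming the local injectivity you are trying to prove. Your proposed endgame is also not a proof as written: hypothesis (i) supplies uniqueness only at the level $F=0$, so nothing prevents $F(\cdot,b)^{-1}(c)$ from being genuinely multivalued for every $c\neq 0$, and ``specialising to $c=0$'' then produces no contradiction, since the two putative selections are forced to coalesce exactly there. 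If you wish to carry this direction through, you should consult Kumagai's short note directly rather than attempt a degree-theoretic reconstruction.
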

    For the sake of completeness, it's worth noting that the Kumagai-Jittorntrum implicit function theorem can be applied to control systems of form \eqref{control sys} whose domain are neighborhoods of points in $\xR^k \times \xR^\ell$. This is due to the fact that a neighborhood of a point in $\xR^p$ contains an open set homeomorphic to $\xR^p$ itself.
    
    Simple examples, such as $\dot{x}=x + u^3$, show that local asymptotic stabilizability by a composition operator with a smooth symbol is still possible in certain cases where local asymptotic stabilizability by a continuously differentiable feedback law is not.  Let's use the Kumagai-Jittorntrum implicit function theorem again to investigate this phenomenon. We will need a criterion suitably formulated for compositions to do this.
    \begin{crtrn}\label{hyp 2}
    There exists an open neighborhood of the origin $\mathcal{N} \subseteq \mathcal{X}$ and an auxiliary mapping $\beta \colon \mathcal{N} \rightarrow \mathcal{X}\times \mathcal{U}$ satisfying $\beta(0)=(0,0)$ such that the composition $f \circ \beta \colon  \mathcal{N}\rightarrow \xR^n$ is continuous and injective.
    \end{crtrn}
    The connection between Criterion~\Rref{hyp 2} and the Kumagai-Jittorntrum implicit function theorem is clear.  The first theorem here shows how this may be applied to stabilization by composition operators.  As in Theorem~\Rref{discontinuous composition stabilizability theorem}, we stress that there is \textit{no guarantee} that the symbol of the composition operator in the following result need be continuous. 
    \begin{thrm}\label{sufficiency of hyp 2}
    If Criterion~\Rref{hyp 2} holds for the system \eqref{control sys}, then \eqref{control sys} is locally exponentially stabilizable by means of a composition operator with a stationary symbol.
    \end{thrm}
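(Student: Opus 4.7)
The plan is to invoke the Kumagai--Jittorntrum implicit function theorem (Theorem~\Rref{kumagai jittorntrum implicit function theorem}) to manufacture a (possibly discontinuous) local right inverse $\alpha$ of $f$ at the origin out of the auxiliary map $\beta$, and then assemble a stationary symbol $h$ by composing $\alpha$ with a chosen exponentially stable dynamics, exactly as in the proofs of Theorem~\Rref{hautus for comp stab implies continuous right inverse} and Theorem~\Rref{discontinuous composition stabilizability theorem}.

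First, I would define $F \colon \mathcal{N} \times \xR^n \rightarrow \xR^n$ by $F(a,b) := (f\circ\beta)(a) - b$. Since $f \circ \beta$ is continuous by Criterion~\Rref{hyp 2}, $F$ is continuous, and $F(0,0)=0$ because $\beta(0)=(0,0)$ and $f(0,0)=0$. For every fixed $b$, the injectivity of $f \circ \beta$ on $\mathcal{N}$ means that $a \mapsto F(a,b)$ is injective on $\mathcal{N}$, so condition (ii) of Theorem~\Rref{kumagai jittorntrum implicit function theorem} holds. Applying the equivalence yields open neighborhoods $A \subseteq \mathcal{N}$ and $B \subseteq \xR^n$ of the respective origins and a continuous map $w \colon B \rightarrow A$ with $(f \circ \beta)(w(b)) = b$ for every $b \in B$. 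The uniqueness built into condition (i) of the same theorem, together with $F(0,0) = 0$, forces $w(0)=0$.

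Set $\alpha := \beta \circ w \colon B \rightarrow \mathcal{X} \times \mathcal{U}$. Then $\alpha(0)=(0,0)$ and $f \circ \alpha = \textrm{id}_{B}$, so $\alpha$ is a stationary right inverse of $f$ on $B$, although it need not itself be continuous because $\beta$ need not be. Now pick any continuous vector field $g \colon \xR^n \rightarrow \xR^n$ with $g(0)=0$ for which $\dot{x} = g(x)$ is locally exponentially stable at the origin (for example, $g(x) := -x$), choose a neighborhood $\mathcal{N}'$ of the origin small enough that $g(\mathcal{N}') \subseteq B$, and let $\tilde{g}\colon \mathcal{N}' \rightarrow B$ be the corresponding corestriction. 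Define the symbol $h := \alpha \circ \tilde{g}$; then $h(0)=(0,0)$, and
\begin{equation*}
T_h f \;=\; f \circ \alpha \circ \tilde{g} \;=\; \textrm{id}_{B} \circ \tilde{g} \;=\; \tilde{g},
\end{equation*}
which agrees with $g$ on $\mathcal{N}'$. Hence $T_hf$ is continuous and the origin is a locally exponentially stable equilibrium of $\dot{x} = T_h f(x)$, giving the desired composition operator with a stationary (generally discontinuous) symbol.

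The main conceptual step --- and the one piece that is not just routine bookkeeping imitating earlier proofs --- is the recognition that the continuity of $f \circ \beta$, rather than of $\beta$ itself, is exactly what is required to trigger Kumagai--Jittorntrum, and that the resulting $\alpha = \beta \circ w$ still serves as a perfectly usable (possibly discontinuous) stationary right inverse of $f$ near the origin. No continuity of $h$ is asserted or obtained, in complete parallel with Theorem~\Rref{discontinuous composition stabilizability theorem}.
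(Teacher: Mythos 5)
Your proof is correct and follows essentially the same route as the paper's: both arguments feed the injectivity of $f\circ\beta$ from Criterion~\Rref{hyp 2} into the Kumagai--Jittorntrum implicit function theorem and take the symbol to be $\beta$ composed with the resulting continuous implicit function and a chosen exponentially stable $g$. The only (immaterial) difference is that you first extract a right inverse $\alpha=\beta\circ w$ satisfying $f\circ\alpha=\textrm{id}$ and then post-compose with $g$, whereas the paper builds $g$ directly into the implicit equation $g(x)-f\big(\beta(y)\big)=0$.
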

    \begin{proof}
    Let $g \in \mathrm{C}(\mathcal{X},\xR^n)$ be any vector field such that $\dot{x}=g(x)$ is locally exponentially stable around the origin. Observe that if there exists an open neighborhood of the origin $\mathcal{N} \subseteq \mathcal{X}$ and a mapping $\beta \colon \mathcal{N} \rightarrow \mathcal{X} \times \mathcal{U}$ such that the mappings $y \mapsto f\big(\beta(y)\big)$ defined on $\mathcal{N}$ are continuous and injective, then given any open neighborhood of the origin $\mathcal{O}\subseteq \mathcal{X}$, the mappings $y \mapsto g(x)-f\big(\beta(y)\big)$ defined on $\mathcal{N}$ are also continuous and injective for each fixed $x \in \mathcal{O}$. Theorem~\Rref{kumagai jittorntrum implicit function theorem} tells us that there exist open neighborhoods of the origin $\mathcal{O}_0,\mathcal{N}_0 \subseteq \mathcal{X}$ and a unique continuous mapping $w \colon \mathcal{O}_0\rightarrow \mathcal{N}_0$ such that $g(x)-f\big(\beta\big(w(x)\big)\big) = 0$. Since the system $\dot{x}=g(x)$ is locally exponentially stable, and since taking $h(x)=\beta\big(w(x)\big)$ yields $g(x)=(T_hf)(x)$, the claimed local exponential stabilizability follows.  
    \end{proof}
    Notice that if the mapping $y \mapsto f(\beta(y))$ is injective, then this dictates that the mapping $y \mapsto \beta(y)$ must also be injective.  We can certainly send an open set in $\xR^n$ into a (necessarily not open) subset of $\xR^n \times \xR^m$ continuously and injectively without any trouble, so this does not preclude the possibility that one might be able to choose $\beta$ to be continuous.
    
    Along these lines, we get the next lemma as an immediate consequence of the invariance of domain.
    \begin{lmm}\label{homeo mappings 2}
    Suppose that Criterion~\Rref{hyp 2} holds for the system \eqref{control sys}. Then the image set $f\big(\beta(\mathcal{N})\big)$ is an open neighborhood of the origin, and the composition $f\circ \beta$ induces a homeomorphism $g \colon \mathcal{N} \rightarrow f\big(\beta(\mathcal{N})\big)$ such that $\iota \circ g = f \circ \beta$, where $\iota$ denotes the inclusion map $\iota \colon f\big(\beta(\mathcal{N})\big)\rightarrow \xR^n$.
    \end{lmm}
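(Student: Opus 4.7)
The plan is to recognize this statement as a direct application of Brouwer's invariance of domain theorem. Criterion~\Rref{hyp 2} gives us exactly the hypotheses of that theorem: a continuous injection from an open subset $\mathcal{N} \subseteq \xR^n$ of Euclidean space into $\xR^n$ of the same dimension. Invariance of domain then immediately yields both conclusions of the lemma (openness of the image and the homeomorphism property), so the bulk of the work is bookkeeping the factorization through the inclusion map.

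First, I would unpack Criterion~\Rref{hyp 2} to observe that $f \circ \beta \colon \mathcal{N} \to \xR^n$ is continuous and injective, with $\mathcal{N}$ open in $\xR^n$. Applying Brouwer's invariance of domain, the image $f\big(\beta(\mathcal{N})\big)$ is open in $\xR^n$ and the map $f \circ \beta$, viewed as a map onto its image, is a homeomorphism. Since $\beta(0)=(0,0)$ and $f(0,0)=0$ by the standing assumptions on \eqref{control sys}, we have $0 = f\big(\beta(0)\big) \in f\big(\beta(\mathcal{N})\big)$, so this image is indeed an open neighborhood of the origin.

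Second, I would define $g \colon \mathcal{N} \to f\big(\beta(\mathcal{N})\big)$ as the map $y \mapsto f\big(\beta(y)\big)$ with codomain corestricted to the image. By the previous paragraph, $g$ is a homeomorphism. The factorization $\iota \circ g = f \circ \beta$ is then automatic from the definition of $\iota$ as the inclusion $f\big(\beta(\mathcal{N})\big) \hookrightarrow \xR^n$.

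The only potential subtlety, which I would not expect to be a real obstacle, is ensuring that invariance of domain applies cleanly: one uses that $\mathcal{N}$ and $\xR^n$ have the same dimension, which is precisely why the auxiliary map $\beta$ was defined on a neighborhood in $\mathcal{X} \subseteq \xR^n$ rather than directly on $\mathcal{X} \times \mathcal{U}$. Beyond this, the argument is essentially a one-line appeal to a classical theorem.
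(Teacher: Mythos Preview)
Your proposal is correct and matches the paper's own treatment: the paper states this lemma without proof, noting only that it is ``an immediate consequence of the invariance of domain.'' Your write-up supplies exactly the routine verification the paper omits, including the observation that $0 \in f(\beta(\mathcal{N}))$ via $\beta(0)=(0,0)$ and $f(0,0)=0$.
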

    While Theorem~\Rref{sufficiency of hyp 2} yields the existence of a stabilizing composition operator, the lack of any guarantee of continuity is undesirable.  From this, the question arises: \textit{when does a continuous auxiliary mapping satisfying Criterion~\Rref{hyp 2} exist?}  The answer is: \textit{when there exists a stabilizing composition operator with a continuous stationary symbol}.  Before showing this, the reader should recall that the continuous extension of Brockett's theorem (i.e. Theorem~\Rref{brocketts - continuous extension}) requires the uniqueness condition given in Criterion~\Rref{uniqueness criterion}.  As it should be, the following result requires the composition stabilizability uniqueness condition given in Criterion~\Rref{uniqueness criterion for composition operators} in the same fashion.
    \begin{lmm}\label{cont stabilizing comp op gives continuous hyp 2}
    Let $T_h$ be a composition operator with a continuous stationary symbol locally asymptotically stabilizing \eqref{control sys}.  Then, $\restr{T_hf}{\mathcal{O}}$ is injective for some open neighborhood of the origin $\mathcal{O}$ and $\restr{h}{\mathcal{O}}$ is a continuous function satisfying Criterion~\Rref{hyp 2}.
    \end{lmm}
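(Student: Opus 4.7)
The strategy is to show that setting $\beta := \restr{h}{\mathcal{O}}$ yields a mapping satisfying Criterion~\Rref{hyp 2} for some sufficiently small neighborhood $\mathcal{O}$ of the origin. Continuity of $\beta$ and the stationary condition $\beta(0) = (0,0)$ follow directly from the hypothesis that $h$ is continuous and stationary, and continuity of $f \circ \beta = \restr{T_h f}{\mathcal{O}}$ is part of Criterion~\Rref{uniqueness criterion for composition operators}. Thus the entire content of the lemma reduces to showing that $T_h f$ is locally injective at the origin.

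For the injectivity, I would first invoke the composition-operator extension of Brockett's theorem (Theorem~\Rref{brockett's necessary and sufficient theorem}) to conclude that $T_h f$ is open at the origin, and then combine local asymptotic stability with the uniqueness of trajectories in Criterion~\Rref{uniqueness criterion for composition operators} to conclude that the origin is an isolated zero of $T_h f$ in some neighborhood --- otherwise a nontrivial zero $x_0$ arbitrarily close to the origin would yield a constant trajectory $x(t) \equiv x_0$, violating attractivity. With openness at the origin and the origin being an isolated preimage of $0$ in hand, Lemma~\Rref{right inverse continuous at zero lemma} produces a right inverse of the associated closed quotient map $\widetilde{T_h f}$ that is continuous at $0$.

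Finally, I would apply the Kumagai--Jittorntrum implicit function theorem (Theorem~\Rref{kumagai jittorntrum implicit function theorem}) to the equation $F(a,b) := T_h f(a) - b = 0$ at the base point $(0,0)$. The continuous right inverse just produced, combined with a local uniqueness argument, should furnish the continuous implicit solution $a = w(b)$ required by condition (i) of that theorem; the equivalence with condition (ii) then delivers the injectivity of $a \mapsto T_h f(a)$ on some neighborhood of the origin, and hence of $\restr{T_h f}{\mathcal{O}}$.

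The main obstacle is precisely this local uniqueness. Lemma~\Rref{right inverse continuous at zero lemma} guarantees continuity at $0$ of \emph{some} right inverse but does not by itself single one out, while condition (i) of Kumagai--Jittorntrum requires a \emph{unique} continuous implicit solution. Closing this gap appears to require exploiting the flow of $\dot{x} = T_h f(x)$: since trajectories are unique and converge to the origin, their time-$t$ evolution traces a topological structure that should rule out multiple preimages of $T_h f$ arbitrarily close to the origin. Making this dynamics-based uniqueness argument rigorous --- without which injectivity of a merely continuous, asymptotically stabilizing vector field cannot be taken for granted --- is the delicate technical step of the proof.
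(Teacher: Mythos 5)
Your reduction of the lemma to the local injectivity of $T_hf$ at the origin is exactly right, and your observations about the continuity and stationarity of $\beta=\restr{h}{\mathcal{O}}$ match the paper. The route you then take, however, diverges from the paper's and stalls at precisely the point you flag. The chain ``openness at the origin $\Rightarrow$ isolated zero $\Rightarrow$ Lemma~\Rref{right inverse continuous at zero lemma}'' only delivers a right inverse of the associated quotient map that is continuous \emph{at the single point} $0$, whereas condition (i) of Theorem~\Rref{kumagai jittorntrum implicit function theorem} demands that for every $b$ in a neighborhood the equation $F(a,b)=0$ have a \emph{unique} solution $a=w(b)$ with $w$ continuous on that whole neighborhood. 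A pointwise-continuous section gives neither the uniqueness nor the continuity away from $0$, so this machinery cannot be fed into Kumagai--Jittorntrum, and the injectivity never materializes. You correctly identify this as the gap but do not close it.

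The paper closes it with a different choice of implicit equation, which is the one idea missing from your proposal: do not parametrize the right-hand side by an arbitrary target value $b$, but by an initial condition and a time. Concretely, with $\Phi_y(\cdot)$ the flow of $\dot{x}=T_hf(x)$ (unique by Criterion~\Rref{uniqueness criterion for composition operators}) and a fixed $T>0$, the paper sets $F\big(x,(y,t)\big) := T_hf(x) - T_hf\big(\Phi_y(t+T)\big)$ on $\mathcal{N}\times\big(\mathcal{N}\times(-T,\infty)\big)$. For each $(y,t)$ the solution $x=w(y,t)=\Phi_y(t+T)$ is supplied by the flow itself, is asserted to be unique on the strength of the uniqueness of trajectories, and $w$ is jointly continuous by continuous dependence on initial data. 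This is exactly the package required by condition (i), so the implication (i)$\Rightarrow$(ii) of Theorem~\Rref{kumagai jittorntrum implicit function theorem} gives injectivity of $x\mapsto T_hf(x)-T_hf\big(\Phi_y(t)\big)$ on a fixed neighborhood $\mathcal{O}$ for all small $(y,t)$; evaluating at $(y,t)=(0,0)$ and using $T_hf\big(\Phi_0(T)\big)=T_hf(0)=0$ yields the injectivity of $\restr{T_hf}{\mathcal{O}}$. Your closing remark that the flow ``traces a topological structure'' ruling out multiple preimages is pointing at this construction, but without writing down this specific $F$ and $w$ the argument is not complete; note also that the detour through Brockett's theorem, isolated equilibria, and Lemma~\Rref{right inverse continuous at zero lemma} plays no role in the paper's proof and can be dropped.
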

    \begin{proof}
    If $\dot{x}=T_hf(x)$ has the origin as a locally asymptotically stable equilibrium, then by our assumption in Criterion~\Rref{uniqueness criterion for composition operators}, the trajectories of the system are unique when initialized in some open neighborhood $\mathcal{N}$ of the origin.  Fix $y$ in $\mathcal{N}$, choose some $T>0$, and note that $\mathcal{N}\times(\mathcal{N}\times (-T,\infty))$ is an open neighborhood of the origin in $\xR^n \times (\xR^n \times \xR)$ such that the function $F \colon \mathcal{N} \times \big(\mathcal{N}\times(-T,\infty)\big) \rightarrow \xR^n$ given by $F\big(x,(y,t)\big) = T_hf(x)-\dot{\Phi_{y}}(t+T)$ satisfies $F\big(0,(0,0)\big) = 0$ and is such that for all $(y,t) \in \mathcal{N}\times (-T,\infty)$, the equation $F\big(x,(y,t)\big)=0$ has a unique solution $x = w(y,t)$ given by the continuous mapping $w(y,t)=\Phi_{y}(t+T)$.  By the Kumagai-Jittorntrum Implicit Function Theorem, this then implies that for all $(y,t)$ in a neighborhood of $(0,0)$ contained in $\mathcal{N}\times (-T,\infty)$, the mappings $x \mapsto F\big(x,(y,t)\big)$ defined on some open neighborhood of the origin are injective.  That is, for all sufficiently small $(y,t)$, the mappings $x \mapsto T_hf(x)-T_hf\big(\Phi_{y}(t)\big)$ defined on an open neighborhood of the origin $\mathcal{O}\subseteq \mathcal{N}$ are injective. Correspondingly, taking $(y,t)=(0,0)$, it follows that the mapping $T_hf(x)-\dot{\Phi_{0}}(T) = T_hf(x)$ defined on $\mathcal{O}$ is injective, and $\restr{h}{\mathcal{O}}$ is a continuous function satisfying Criterion~\Rref{hyp 2}.
    \end{proof}
    Naturally, the reverse is obviously true as well.  That is, for stabilizability by a composition operator with a \textit{continuous} stationary symbol, the existence of a \textit{continuous} auxiliary function $\alpha$ satisfying Criterion~\Rref{hyp 2} is sufficient.
    \begin{lmm}\label{continuous function satisfies hyp 2 implies continuous right inverse lemma} Suppose that there exists a continuous mapping $\beta \colon \mathcal{N} \rightarrow \mathcal{X}\times \mathcal{U}$ which satisfies Criterion~\Rref{hyp 2} for the system \eqref{control sys}. Then the vector field $f(x)$ is open at the origin and the surjection $\tilde{f} \colon \beta(N)\rightarrow f(\beta(\mathcal{N}))$ provided by Lemma~\Rref{surjective restriction} has a continuous stationary right inverse given by $\alpha = \beta \circ g^{-1}$, where $g \colon \mathcal{N}\rightarrow f(\beta(\mathcal{N}))$ is a homeomorphism.
    \end{lmm}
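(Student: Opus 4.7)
The plan is to leverage Lemma~\Rref{homeo mappings 2} in two distinct ways: once to establish openness of $f$ at the origin, and once to construct the continuous right inverse. Essentially, the entire content of this lemma is packaged inside the homeomorphism produced by invariance of domain; what remains is to chase diagrams.

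First, I apply Lemma~\Rref{homeo mappings 2} (which in turn appeals to invariance of domain) to produce a homeomorphism $g\colon \mathcal{N}\to f(\beta(\mathcal{N}))$ with $\iota\circ g=f\circ \beta$, together with the knowledge that $f(\beta(\mathcal{N}))$ is an open neighborhood of the origin in $\xR^n$. All subsequent assertions should follow from these two data.

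Next, for the openness of $f$ at $(0,0)$, I would take an arbitrary open neighborhood $\mathcal{O}\subseteq \mathcal{X}\times\mathcal{U}$ of $(0,0)$ and show that $f(\mathcal{O})$ contains a neighborhood of the origin. By continuity of $\beta$ at $0$, the preimage $\beta^{-1}(\mathcal{O})$ is an open neighborhood of $0$ in $\mathcal{N}$. Since $g$ is a homeomorphism onto the open set $f(\beta(\mathcal{N}))\subseteq \xR^n$, the image $g\bigl(\beta^{-1}(\mathcal{O})\bigr)$ is an open neighborhood of $g(0)=0$ in $\xR^n$. The identity $\iota\circ g=f\circ\beta$ then gives
$$
g\bigl(\beta^{-1}(\mathcal{O})\bigr)=f\bigl(\beta(\beta^{-1}(\mathcal{O}))\bigr)\subseteq f(\mathcal{O}),
$$
so $0\in\textrm{int}\,f(\mathcal{O})$, and $f$ is open at $(0,0)$.

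Finally, with $g$ in hand the right inverse is essentially forced. I would set $\alpha:=\beta\circ g^{-1}\colon f(\beta(\mathcal{N}))\to \beta(\mathcal{N})$; this is continuous because $g^{-1}$ is a homeomorphism and $\beta$ is continuous, and it is stationary since $g^{-1}(0)=0$ and $\beta(0)=(0,0)$. For any $y\in f(\beta(\mathcal{N}))$, writing $x:=g^{-1}(y)\in\mathcal{N}$, the defining relation $\iota\circ g=f\circ\beta$ gives $\tilde f(\alpha(y))=\tilde f(\beta(x))=f(\beta(x))=\iota(g(x))=y$, so $\alpha$ is a continuous stationary right inverse of $\tilde f$. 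The only point requiring mild bookkeeping is a compatibility issue rather than a true obstacle: Lemma~\Rref{surjective restriction} a priori produces $\tilde f$ on open neighborhoods $X_0$, $Y_0$, whereas the statement here takes $\tilde f$ defined on $\beta(\mathcal{N})$ (which need not itself be open in $\mathcal{X}\times\mathcal{U}$). Since $f(\beta(\mathcal{N}))$ is already open by Lemma~\Rref{homeo mappings 2}, interpreting $\tilde f$ as $f$ corestricted to $f(\beta(\mathcal{N}))$ (with domain $\beta(\mathcal{N})$) is consistent with Lemma~\Rref{surjective restriction}; thereafter the verification $\tilde f\circ\alpha=\textrm{id}$ is immediate from the commutative square defining $g$.
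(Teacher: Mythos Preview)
Your proof is correct and tracks the paper's argument in its main structure: both invoke Lemma~\Rref{homeo mappings 2} to obtain the homeomorphism $g$ with $\iota\circ g=f\circ\beta$, and both define $\alpha=\beta\circ g^{-1}$ and verify directly that it is a continuous stationary right inverse of $\tilde f$.

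Where you diverge is in the openness argument. The paper argues that $\beta$ itself is injective (since $f\circ\beta$ is), then appeals a second time to invariance of domain to produce a homeomorphism $k\colon\mathcal{N}\to\beta(\mathcal{N})$, and uses this to conclude that $\beta(\mathcal{N})$ is a neighborhood of the origin in the subspace topology before pushing forward through $f$. Your argument is more direct: pull back an arbitrary neighborhood $\mathcal{O}$ of $(0,0)$ through the continuous map $\beta$, push the resulting open set forward through the homeomorphism $g$, and observe that the result sits inside $f(\mathcal{O})$. This avoids any discussion of the topology on $\beta(\mathcal{N})$ and, in particular, sidesteps the paper's second invocation of invariance of domain---which, as stated, is a bit delicate since $\beta$ maps an open set in $\xR^n$ into $\xR^{n+m}$ and invariance of domain does not apply verbatim there (the paper's conclusion that $\beta$ is a homeomorphism onto its image is still correct, but the real reason is that $g^{-1}\circ f$ furnishes a continuous left inverse). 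Your route is cleaner on this point; the paper's route, on the other hand, extracts the additional information that $\beta(\mathcal{N})$ is a neighborhood of the origin in the subspace topology, which is what makes the phrase ``provided by Lemma~\Rref{surjective restriction}'' literally applicable. Your closing remark about the compatibility bookkeeping is accurate and handles this adequately.
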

    \begin{proof}
    If $f$ satisfies Criterion~\Rref{hyp 2} for some continuous mapping $\beta$, then there exists an open neighborhood of the origin $\mathcal{N} \subseteq \mathcal{X}$ and an auxiliary mapping $\beta  \colon  \mathcal{N} \rightarrow \mathcal{X}\times \mathcal{U}$ such that the composition $f\circ\beta \colon \mathcal{N}\rightarrow\xR^n$ is continuous and injective. By Lemma~\Rref{homeo mappings 2}, this composition induces a homeomorphism $g \colon \mathcal{N} \rightarrow f\big(\beta(\mathcal{N})\big)$ such that $\iota \circ g = f \circ \beta$ with the inclusion map $\iota \colon f\big(\beta(\mathcal{N})\big)\rightarrow \xR^n$. Since $g$ is a homeomorphism, it follows that $f(\beta(\mathcal{N}))$ is open and the inverse mapping $g^{-1} \colon f\big(\beta(\mathcal{N})\big)\rightarrow \mathcal{N}$ is continuous, as is $\beta \circ g^{-1}$. Since $f \circ \beta \circ g^{-1}=\iota \circ g \circ g^{-1} = \iota$, we have that $f\circ (\beta \circ g^{-1})$ is the identity on the image set $f\big(\beta(\mathcal{N})\big)$. 
                
    Similarly, it follows from the injectivity of $f \circ \beta$ that $\beta$ must also be injective.  By its continuity and the application of the invariance of domain, this again induces a homeomorphism $k \colon \mathcal{N}\rightarrow \beta(N)$ such that $\tau \circ k = \beta$, where $\tau \colon \beta(\mathcal{N})\rightarrow \mathcal{X}\times\mathcal{U}$ denotes the corresponding inclusion map. Consequentially, $k(\cdot)$ is an open mapping and $\beta(\mathcal{N})$ is a neighborhood of the origin in the subspace topology. As before, since $k(\cdot)$ is a homeomorphism, it follows that $k^{-1} \colon \beta(\mathcal{N})\rightarrow \mathcal{N}$ is continuous together with $\beta \circ k^{-1}$. Since $\beta \circ k^{-1}=\tau \circ k \circ k^{-1} = \tau$, we get
    $$
    \iota \circ g = f\circ \beta = f \circ \tau \circ k
    $$
    and thus $\iota \circ g \circ k^{-1}=f\circ \tau = \restr{f}{\beta(\mathcal{N})}$. Let now $\mathcal{O}\subseteq \mathcal{X}\times\mathcal{U}$ be an arbitrary neighborhood of the origin. Since $\beta(N)$ is a neighborhood of the origin in the subspace topology, we conclude that $\beta(\mathcal{N})\cap \mathcal{O} \neq \emptyset$, and therefore
    $$
    \big(\iota \circ g \circ k^{-1}\big)\big(\mathcal{O}\cap \beta(\mathcal{N})\big)=\restr{f}{\beta(\mathcal{N})}\big(\mathcal{O}\cap \beta(\mathcal{N})\big)\subseteq f(\mathcal{O}),
    $$
    which tells us in turn that the mapping $f$ is open at the origin.
        
    Finally, taking $\tilde{f} \colon \beta(\mathcal{N}) \rightarrow f\big(\beta(\mathcal{N})\big)$ to be the continuous surjection such that $\iota \circ \tilde{f}=\restr{f}{\beta(\mathcal{N})}$ (as is provided by Lemma~\Rref{surjective restriction}), observe that 
    $$
    \iota \circ \tilde{f} \circ (\beta \circ g^{-1})=\restr{f}{\beta(\mathcal{N})} \circ (\beta \circ g^{-1})=f \circ \beta \circ g^{-1} = \iota,
    $$
    which verifies that $\tilde{f}$ has a continuous right inverse given by $\alpha = \beta \circ g^{-1}$.
    \end{proof}
    It's indeed remarkable how easy stability is to achieve via composition operators (on a theoretical level, at least).  Namely, since Lemma~\Rref{continuous function satisfies hyp 2 implies continuous right inverse lemma} tells us that a continuous function satisfying Criterion~\Rref{hyp 2} is sufficient for stabilizability by means of composition operators, and Lemma~\Rref{cont stabilizing comp op gives continuous hyp 2} verifies the necessity, it follows that stabilizability by a composition operator with a continuous stationary symbol is equivalent to the existence of a continuous function satisfying Criterion~\Rref{hyp 2}.  
    
    Additionally, since Lemma~\Rref{continuous function satisfies hyp 2 implies continuous right inverse lemma} yields a continuous right inverse for such a system, it is natural to ask whether the converse statement holds as well.  It indeed does, as shown in the next lemma.    
    \begin{lmm}\label{continuous right inverse implies continuous function satisfies hyp 2 lemma} Let $f \in \mathrm{C}(\mathcal{X}\times\mathcal{U},\xR^n)$ be open at the origin and satisfy $f(0,0)=0$. Suppose that the surjection $\tilde{f} \colon X_0\rightarrow Y_0$ provided by Lemma~\Rref{surjective restriction} was chosen such that $X_0$ and $Y_0$ are open (as is always possible) and such that $\tilde{f}$ has a continuous stationary right inverse $\alpha \colon Y_0\rightarrow X_0$. Then, for every open neighborhood of the origin $\mathcal{N}\subseteq \mathcal{X}$ homeomorphic to $Y_0$ and every homeomorphism $g \colon \mathcal{N}\rightarrow Y_0$, the composition $\beta = \alpha \circ g$ is continuous and satisfies Criterion~\Rref{hyp 2}.
    \end{lmm}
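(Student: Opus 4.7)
The plan is to verify all the requirements of Criterion~\Rref{hyp 2} directly by exploiting the factorization identity that defines the surjection $\tilde{f}$ together with the fact that $\alpha$ is a right inverse of $\tilde{f}$. The key observation is that composing $f$ with $\alpha$ collapses to just the inclusion $\iota_1 \colon Y_0 \to \xR^n$, so $f \circ \beta$ reduces to a map built out of the homeomorphism $g$, for which continuity and injectivity come for free.

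First, I would observe that $\beta = \alpha \circ g$ is continuous as a composition of continuous maps (both $g$ and $\alpha$ are continuous by hypothesis). Since $g$ is a homeomorphism between neighborhoods of the origin, it sends $0$ to $0$, and so $\beta(0) = \alpha(g(0)) = \alpha(0) = (0,0)$ by stationarity of $\alpha$. This handles the pointwise and continuity requirements of Criterion~\Rref{hyp 2}.

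Next, I would compute $f \circ \beta$ explicitly. Since the image of $\alpha$ lies in $X_0$, we can use the defining identity $\iota_1 \circ \tilde{f} = \restr{f}{X_0}$ (where $\iota_1 \colon Y_0 \to \xR^n$ is the inclusion) to write
\[
f \circ \alpha = \iota_1 \circ \tilde{f} \circ \alpha = \iota_1 \circ \textrm{id}_{Y_0} = \iota_1,
\]
using that $\alpha$ is a right inverse of $\tilde{f}$. Composing on the right with $g$ gives $f \circ \beta = \iota_1 \circ g$. Both $\iota_1$ and $g$ are continuous and injective, so $f \circ \beta$ is continuous and injective on $\mathcal{N}$, completing the verification of Criterion~\Rref{hyp 2}.

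There is really no serious obstacle here; the whole proof is a one-line bookkeeping computation once the factorization $\iota_1 \circ \tilde{f} = \restr{f}{X_0}$ from Lemma~\Rref{surjective restriction} is invoked. The only mild subtlety is being careful that the values of $\alpha$ genuinely land in $X_0$ (so that the restriction identity applies) and that ``homeomorphism $g \colon \mathcal{N} \to Y_0$ between neighborhoods of the origin'' is interpreted as sending $0$ to $0$, which is needed to obtain $\beta(0) = (0,0)$.
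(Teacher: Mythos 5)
Your proof is correct and follows essentially the same route as the paper: the identity $f \circ (\alpha \circ g) = \iota \circ \tilde{f} \circ \alpha \circ g = \iota \circ g$ is exactly the paper's central computation, with continuity and injectivity of $\iota \circ g$ finishing the argument. Your remark that $g$ must be taken to send $0$ to $0$ in order to get $\beta(0)=(0,0)$ is a point the paper's proof passes over silently, and is worth making explicit.
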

    \begin{proof}
    Suppose $X_0$ and $Y_0$ are chosen to be open and such that the surjection $\tilde{f} \colon X_0\rightarrow Y_0$ has a continuous stationary right inverse $\alpha \colon Y_0\rightarrow X_0$.  Let $\mathcal{N}\subseteq \mathcal{X}$ be an open neighborhood of the origin homeomorphic to $Y_0$.  By the invariance of domain and the assumption that $X_0$ is taken to be open, at least one such neighborhood exists. Let $g \colon \mathcal{N}\rightarrow Y_0$ be any homeomorphism.  Then, since both $\alpha$ and $g$ must be injective, the composition $\beta = \alpha \circ g$ is injective as well. Thus we get
    $$
    f \circ (\alpha \circ g)=\iota \circ \tilde{f} \circ \alpha \circ g=\iota \circ g
    $$
    via the corresponding inclusion map $\iota \colon Y_0\rightarrow \xR^n$. Since  $\iota\circ g$ is injective and continuous, the result follows.
    \end{proof}
    Therefore, we have that stabilizability by means of a composition operator with a continuous stationary symbol is \textit{equivalent} to the existence of a continuous mapping $\beta$ satisfying Criterion~\Rref{hyp 2}, which is \textit{equivalent} to the existence of a continuous right inverse $\alpha$ on a neighborhood of the origin for $f$.  Indeed, the two are related by homeomorphisms.  While this may initially seem like a somewhat strong condition, the results of Theorem~\Rref{hautus for comp stab implies continuous right inverse} and Theorem~\Rref{discontinuous composition stabilizability theorem} suggest that it is the correct one to expect.  In light of this, we end the section with the following theorem summarizing these results.  
    \begin{thrm}\label{coronconjecture}
    The system \eqref{control sys} is locally asymptotically stabilizable by means of a composition operator $T_h$ with a continuous stationary symbol $h \colon \mathcal{O}\rightarrow \mathcal{X}\times\mathcal{U}$ if and only if there exists a stationary local section of $f$ near the origin (or, equivalently, a continuous mapping satisfying Criterion~\Rref{hyp 2}).  Moreover, such a composition operator $T_h$ can always be chosen so that this stability is exponential.
    \end{thrm}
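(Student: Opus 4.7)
The plan is to close the chain of equivalences already built up in this subsection so that stabilizability by a composition operator with a continuous stationary symbol, existence of a continuous mapping satisfying Criterion~\Rref{hyp 2}, and existence of a stationary local section of $f$ near the origin all coincide; only the exponential upgrade requires any further observation.

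For the forward direction, I would begin with a composition operator $T_h$ having a continuous stationary symbol that locally asymptotically stabilizes \eqref{control sys}. Lemma~\Rref{cont stabilizing comp op gives continuous hyp 2} then supplies an open neighborhood of the origin $\mathcal{O}$ on which $\restr{h}{\mathcal{O}}$ is a continuous mapping satisfying Criterion~\Rref{hyp 2}. Feeding this into Lemma~\Rref{continuous function satisfies hyp 2 implies continuous right inverse lemma} immediately yields a continuous stationary local section of $f$ near the origin, as required.

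For the converse and the simultaneous exponential refinement, I would start with a stationary local section $\alpha$ of $f$ near the origin and invoke Lemma~\Rref{continuous right inverse implies continuous function satisfies hyp 2 lemma} to extract a continuous mapping $\beta$ satisfying Criterion~\Rref{hyp 2}. I would then revisit the argument used for Theorem~\Rref{sufficiency of hyp 2} with target dynamics $\dot{x}=g(x)$ chosen to be globally exponentially stable, e.g.\ $g(x) = -x$. That construction produces a stabilizing symbol of the explicit form $h = \beta \circ w$, where $w$ is the continuous function delivered by the Kumagai--Jittorntrum Implicit Function Theorem (Theorem~\Rref{kumagai jittorntrum implicit function theorem}). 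Since $\beta$ is continuous by hypothesis, $h$ is a composition of continuous maps and is itself continuous, and the closed-loop dynamics are exponentially stable by the choice of $g$.

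The main obstacle is not algebraic but bookkeeping: Theorem~\Rref{sufficiency of hyp 2} was stated without any continuity guarantee for its symbol, so the reverse direction cannot be cited as a black box. The crux is to inspect the internal structure of its proof and confirm that continuity of $\beta$, which is precisely what Lemma~\Rref{continuous right inverse implies continuous function satisfies hyp 2 lemma} now guarantees, is exactly what is needed to transfer continuity to $h$. Once this is noted, choosing an exponentially stable target vector field at the outset yields the final ``moreover'' claim at no extra cost.
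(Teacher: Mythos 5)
Your proposal is correct and follows essentially the same route as the paper, which presents Theorem~\Rref{coronconjecture} precisely as the summary of the chain Lemma~\Rref{cont stabilizing comp op gives continuous hyp 2} $\Rightarrow$ Lemma~\Rref{continuous function satisfies hyp 2 implies continuous right inverse lemma} for the forward direction and Lemma~\Rref{continuous right inverse implies continuous function satisfies hyp 2 lemma} together with the construction in Theorem~\Rref{sufficiency of hyp 2} for the converse, and your observation that continuity of $\beta$ transfers to $h=\beta\circ w$ is exactly the point that upgrades that construction. The only simplification worth noting is that the reverse direction and the exponential refinement can be obtained without Kumagai--Jittorntrum at all, by composing the continuous stationary section $\alpha$ directly with a restriction $\tilde{g}$ of an exponentially stable field so that $T_hf=f\circ\alpha\circ\tilde{g}=\restr{g}{\mathcal{N}}$, as in the reverse direction of Theorem~\Rref{hautus for comp stab implies continuous right inverse}.
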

    Having now resolved a large part of Problem~\Rref{question 1} posed in the introduction via the results of this section and Sections~\Rref{Brockett type} and \Rref{Hautus for comp stab}, we move on to the case of feedback laws.
\section{Stabilizability by Feedback Laws}
\label{kumagai ift section chapter 3}
    In this section, we will apply the composition stabilizability results of the previous sections to resolve the problem of asymptotic stabilizability by solving Problem~\Rref{question 2} as posed in the introduction.  That is, we will reduce the problem of \textit{stabilizability} to the problem of \textit{stability}.
\subsection{Stabilizability and Stability of Local Sections}\label{stabilizability by feedback laws subsection}
    To begin our path towards answering the question posed in Problem~\Rref{question 2}, we need a preliminary result that is not as useful for direct computation.
    \begin{thrm}\label{feedback stabilizable iff change of coordinates comp stabilizable}
    For $k \geq 0$, the system \eqref{control sys} is locally asymptotically (resp. exponentially) stabilizable by a $\xCn{k}$ stationary feedback law $u(x)$ if and only if there is a composition operator $T_h$ with a $\xCn{k}$ stationary symbol such that $h_1 := \restr{(\textrm{proj}_1 \circ h)}{\mathcal{O}}$ is a diffeomorphism for some open neighborhood of the origin $\mathcal{O}$ and $\dot{x}=\restr{J_{h_1}}{x}^{-1}T_hf(x)$ has the origin as a locally asymptotically (resp. exponentially) stable equilibrium.
    
    Moreover, if the system \eqref{control sys} is locally asymptotically (resp. exponentially) stabilizable by $\xCn{k}$ stationary feedback laws, then every such stabilizing feedback $u$ is of the form $u = \textrm{proj}_2 \circ h \circ (h_1)^{-1}$ for some composition operator $T_h$ with a $\xCn{k}$ stationary symbol satisfying the conditions above.
    \end{thrm}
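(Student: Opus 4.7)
The plan is to establish both directions via the change of coordinates $x = h_1(y)$, which conjugates the closed-loop system in $x$-coordinates with the auxiliary system $\dot{y} = \restr{J_{h_1}}{y}^{-1} T_h f(y)$ in $y$-coordinates, and to exploit that such a conjugation preserves local asymptotic (and exponential) stability.

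\textbf{Forward direction.} Given a $\xCn{k}$ stationary stabilizing feedback $u$, I take the canonical lift $h(x) := (x, u(x))$. Then $h_1 = \textrm{proj}_1 \circ h$ is just the identity on its domain, trivially a $\xCn{k}$ diffeomorphism (reading a $\xCn{0}$-diffeomorphism as a homeomorphism when $k=0$), with $J_{h_1} \equiv I$. Therefore the system $\dot{x} = \restr{J_{h_1}}{x}^{-1} T_h f(x)$ reduces to the closed-loop system $\dot{x} = f(x, u(x))$, which is stable by hypothesis. The same $h$ also confirms the ``moreover'' clause of the theorem: $\textrm{proj}_2 \circ h \circ (h_1)^{-1} = u$.

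\textbf{Reverse direction.} Conversely, given such an $h$, I define $u(x) := \textrm{proj}_2\big(h(h_1^{-1}(x))\big)$, which is $\xCn{k}$ and stationary since $h$ is $\xCn{k}$, $h_1$ is a $\xCn{k}$ diffeomorphism, and $h(0) = (0,0)$. The key identity is
$$
f\big(x, u(x)\big) = f\big(h_1(h_1^{-1}(x)), \textrm{proj}_2(h(h_1^{-1}(x)))\big) = f\big(h(h_1^{-1}(x))\big) = T_h f\big(h_1^{-1}(x)\big).
$$
Now setting $x = h_1(y)$ in the auxiliary system and applying the chain rule,
$$
\dot{x} = \restr{J_{h_1}}{y}\,\dot{y} = \restr{J_{h_1}}{y}\cdot \restr{J_{h_1}}{y}^{-1} T_h f(y) = T_h f(y) = f\big(x, u(x)\big),
$$
so the closed-loop system in the $x$-variable is precisely the push-forward of the auxiliary stable system in the $y$-variable under the diffeomorphism $h_1$. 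Since $h_1$ is a $\xCn{k}$ diffeomorphism fixing the origin, this push-forward preserves local asymptotic stability; when $k \geq 1$, the invertibility of $\restr{J_{h_1}}{0}$ also guarantees that $h_1$ is bi-Lipschitz near the origin, whence the exponential rate transfers as well.

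The step most likely to need care is ensuring that the trajectory correspondence $x(t) = h_1(y(t))$ is rigorous on both sides: it relies on $h_1$ being a \emph{local} diffeomorphism together with the uniqueness hypotheses in Criterion~\Rref{uniqueness criterion} and Criterion~\Rref{uniqueness criterion for composition operators}, which guarantee that the conjugation takes maximal trajectories to maximal trajectories of the conjugate system. For the exponential version, one also needs to quantify the local bi-Lipschitz constants of $h_1$ at the origin in order to transfer the exponential decay rate cleanly. The ``moreover'' clause in the reverse direction then follows at no extra cost: any $\xCn{k}$ stationary stabilizing feedback $u$ arises from the canonical symbol $h = (\textrm{id}, u)$ produced in the forward direction, and for this $h$ one directly computes $u = \textrm{proj}_2 \circ h \circ (h_1)^{-1}$.
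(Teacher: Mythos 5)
Your proposal is correct and follows essentially the same route as the paper's proof: the forward direction via the canonical lift $h(x)=(x,u(x))$ with $J_{h_1}\equiv I$, and the reverse direction via the change of coordinates $x=h_1(y)$ conjugating the auxiliary system $\dot{y}=\restr{J_{h_1}}{y}^{-1}T_hf(y)$ with the closed-loop system $\dot{x}=f\big(x,(\textrm{proj}_2\circ h\circ h_1^{-1})(x)\big)$. Your added remarks on the $k=0$ reading of ``diffeomorphism,'' the bi-Lipschitz transfer of exponential rates, and the trajectory correspondence are reasonable points of care that the paper's own proof glosses over.
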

    \begin{proof}
    In the forward direction, suppose $T_h$ is a composition operator with a $\xCn{k}$ stationary symbol locally asymptotically (resp. exponentially) stabilizing \eqref{control sys} such that $h$ is induced by a $\xCn{k}$ stationary feedback law (i.e. $h(x) = \big(x,u(x)\big)$ for some $\xCn{k}$ stationary feedback law $u$).  Then it is necessary that $(\textrm{proj}_1 \circ h)(x) = x$.  So, $h_1 := \restr{\textrm{proj}_1 \circ h}{\mathcal{O}}$ is a  diffeomorphism and $I = \restr{J_{h_1}}{x}$ for all $x$ in any open neighborhood of the origin $\mathcal{O}$ contained in the domain of $\textrm{proj}_1\circ h$.  It is straightforward to see that $\dot{x}=\restr{J_{h_1}}{x}^{-1}T_hf(x) = \restr{T_hf}{\mathcal{O}}(x)$ is locally asymptotically (resp. exponentially) stable as well, and $\textrm{proj}_2 \circ h \circ (h_1)^{-1} = u \circ \textrm{id} = u$
    
    In the reverse direction, suppose $T_h$ is composition operator with a $\xCn{k}$ stationary symbol such that $\restr{(\textrm{proj}_1 \circ h)}{\mathcal{O}}$ is a diffeomorphism for some open neighborhood of the origin $\mathcal{O}$ and $\dot{x}=\restr{J_{\textrm{proj}_1 \circ h}}{x}^{-1}T_hf(x)$ has the origin as a locally asymptotically (resp. exponentially) stable equilibrium.  Then, writing $h_2(x) = (\restr{\textrm{proj}_2\circ h}{\mathcal{O}})(x)$, define $g(x) := f\big(x,(h_2 \circ h_1^{-1})(x)\big)$ and $G(x) := \restr{J_{h_1}}{x}^{-1}T_hf(x)$.  Notice that
    $$
    G(x) = \restr{J_{h_1}}{x}^{-1}g\big(h_1(x)\big).
    $$
    So, the change-of-coordinates $y=h_1(x)$ transforms $\dot{y} = g(y)$ into $\dot{x}=G(x)$.  So, $\dot{x} = g(x) = f\big(x,(h_2 \circ h_1^{-1})(x)\big)$ has the origin as a locally asymptotically (resp. exponentially) stable equilibrium, and $u = h_2 \circ h_1^{-1}$ is a $\xCn{k}$ stationary feedback law which asymptotically (resp. exponentially) stabilizes \eqref{control sys}.
    \end{proof}
    Theorem~\Rref{feedback stabilizable iff change of coordinates comp stabilizable} is directly a consequence of a very straightforward change-of-coordinates, and is not reliant on any of the previous sections' work. The approach described by Theorem~\Rref{feedback stabilizable iff change of coordinates comp stabilizable}, however, will be of central importance in what follows.  When combined with the results of previous sections, this will indeed allow us to generate a complete characterization of stabilizability by feedback laws.  
    
    Before continuing, let us illustrate.  The existing literature leaves even some very simple systems, which are stabilizable by continuous feedback laws, largely unaddressed on anything other than a case-by-case basis.  To get a sense for Theorem~\Rref{feedback stabilizable iff change of coordinates comp stabilizable}, let's take a look at one such system.  For context, this example which was used originally by Sontag in \cite[p. 239]{sontag2} to demonstrate, in particular, that the real `sticking point' which prevents certain systems from having $\xCone$ stabilizing controls is the behavior of the control at the equilibrium.  This provides an instance of a system that \textit{does not} have a stabilizable linearization, but \textit{does} have an exponentially stabilizing control that (necessarily) fails to be $\xCone$.
    \begin{xmpl}\label{ex 1 chapter 3}
    Consider the system \eqref{control sys} with $f(x,u):=x+u^3$. By $A_f = 1$ and $B_f = 0$, the linearization of the system is given by $\dot{x}=x$. Of course, this is not a stabilizable system (the Hautus lemma shows it immediately since $\textrm{rank}\left[\begin{array}{c|c}\lambda I - A_f & B_f\end{array}\right] = 0$ for the sole eigenvalue $\lambda=1$ of $A_f$). However, it follows by a routine calculation that the feedback law $u(x)=(-2x)^{1/3}$ globally exponentially stabilizes the system, and it is indeed continuous while not $\xCone$ at the origin.  
            
    Observe now that the function $h \colon \xR \rightarrow \xR^2$ defined by $h(x) := \left(-\frac{x}{2},\sqrt[3]{x}\right )$ is continuous and has $\textrm{proj}_1 \circ h = -\frac{x}{2}$, which is certainly a $\textrm{C}^\infty$ diffeomorphism.  Since $T_hf(x) = -\frac{x}{2}+x = \frac{x}{2}$ and $\restr{J_{\textrm{proj}_1\circ h}}{x}^{-1} = -2$, we get that 
    $$
    \dot{x}=\restr{J_{\textrm{proj}_1\circ h}}{x}^{-1}T_hf(x) = -x.
    $$
    This system is, of course, locally exponentially stable and (as expected) the stabilizing control produced by Theorem~\Rref{feedback stabilizable iff change of coordinates comp stabilizable} is  $u(x)=\big(\textrm{proj}_2 \circ h \circ (\textrm{proj}_1\circ h)^{-1}\big)(x)=(-2x)^{1/3}$.
    \end{xmpl}
    
    To employ the approach in Theorem~\Rref{feedback stabilizable iff change of coordinates comp stabilizable}, we first require a useful fact:
    \begin{thrm}\label{system under comp op is homeomorphism}
    Suppose \eqref{control sys} is locally asymptotically stabilizable by a composition operator $T_h$ with a $\xCn{k}$ stationary symbol for $k \geq 0$.  Then, $\restr{T_hf}{\mathcal{N}}$ is a homeomorphism for any open neighborhood of the origin $\mathcal{N}$ contained in the domain of $h$.  
    
    If, in addition, $T_h$ exponentially stabilizes \eqref{control sys} and is such that $T_hf$ is $\xCn{\ell}$ for $\ell \geq 1$, then $\restr{T_hf}{\mathcal{V}}$ is a $\xCn{\ell}$ diffeomorphism for some open connected neighborhood of the origin $\mathcal{V}$ contained in the domain of $h$.
    \end{thrm}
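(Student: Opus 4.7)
My plan for the first part is to argue that $\restr{T_hf}{\mathcal{N}}$ is a continuous injection onto an open set and then invoke Brouwer's invariance of domain to promote this to a homeomorphism. Continuity of $T_hf$ is part of the standing assumption in Criterion~\ref{uniqueness criterion for composition operators}. Injectivity on some open neighborhood $\mathcal{O}$ of the origin follows directly from Lemma~\ref{cont stabilizing comp op gives continuous hyp 2}, whose proof uses the Kumagai-Jittorntrum implicit function theorem (Theorem~\ref{kumagai jittorntrum implicit function theorem}) applied to $F\bigl(x,(y,t)\bigr) := T_hf(x) - \dot{\Phi_{y}}(t+T)$, with $\Phi_y$ denoting the (uniquely defined) trajectory of $\dot{x}=T_hf(x)$ starting from $y$, to certify that $x \mapsto T_hf(x)$ is injective near the origin. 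Taking $\mathcal{N}$ to be any open neighborhood of the origin contained in $\mathcal{O}$ transports this injectivity to $\mathcal{N}$.

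With continuity and injectivity in hand, Brouwer's invariance of domain immediately gives that $\restr{T_hf}{\mathcal{N}}$ is an open map and that $T_hf(\mathcal{N})$ is an open neighborhood of the origin in $\xR^n$. Hence the restriction $\restr{T_hf}{\mathcal{N}}\colon \mathcal{N} \to T_hf(\mathcal{N})$ is a continuous open bijection onto its image, i.e. a homeomorphism, completing the first claim.

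For the second part, the plan is to upgrade this homeomorphism to a $\xCn{\ell}$ diffeomorphism via the inverse function theorem; the crux is verifying that $\restr{J_{T_hf}}{0}$ is nonsingular. The key input is the standard fact that if $g \in \xCn{1}$ satisfies $g(0)=0$ and the origin of $\dot{x}=g(x)$ is locally exponentially stable, then the linearization $J_g(0)$ is Hurwitz. (Sketch: if some eigenvalue of $J_g(0)$ had nonnegative real part, then comparing trajectories of $\dot{x}=g(x)$ with those of the linearized flow $\dot{z}=J_g(0)z$ on short time intervals, via a converse-Lyapunov argument producing a quadratic-at-the-origin Lyapunov function, would contradict the uniform exponential decay rate required by exponential stability.) Applied with $g = T_hf$, this gives invertibility of $\restr{J_{T_hf}}{0}$; since $T_hf \in \xCn{\ell}$ with $\ell \geq 1$, the $\xCn{\ell}$ inverse function theorem then produces an open connected neighborhood $\mathcal{V}$ of the origin, contained in the domain of $h$, on which $\restr{T_hf}{\mathcal{V}}$ is a $\xCn{\ell}$ diffeomorphism onto its (open) image. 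The main obstacle I anticipate is precisely the Hurwitz fact above, which is standard but slightly subtle in the merely $\xCn{1}$ setting; everything else is a routine assembly of Lemma~\ref{cont stabilizing comp op gives continuous hyp 2}, invariance of domain, and the inverse function theorem.
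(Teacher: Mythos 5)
Your argument is correct, and for the first claim it is essentially the paper's: the paper likewise reduces everything to injectivity of $T_hf$ near the origin followed by invariance of domain, the only difference being that it routes through Theorem~\ref{coronconjecture} to extract a continuous stationary right inverse $\alpha$ of $\restr{T_hf}{\mathcal{N}}$, writes $\restr{T_hf}{\mathcal{N}}=\iota\circ\alpha^{-1}$, and applies invariance of domain twice, whereas you invoke Lemma~\ref{cont stabilizing comp op gives continuous hyp 2} (the underlying Kumagai--Jittorntrum step) directly, which is a mild shortcut. For the second claim your route genuinely diverges. The paper obtains invertibility of $\restr{J_{T_hf}}{0}$ by viewing $\dot{x}=T_hf(x)$ as a control system with trivial control dependence and citing Corollary~\ref{hautus-zabczyk} --- the same underlying fact as your ``exponential stability of a $\xCn{1}$ system forces a Hurwitz linearization'' --- but it then passes to a connected neighborhood $\mathcal{V}$ on which the Jacobian is everywhere invertible (constant rank theorem), verifies that $\restr{T_hf}{\mathcal{V}}$ is closed and proper with compact fibers, and concludes via the Hadamard--Caccioppoli global inverse function theorem. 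You instead apply the local $\xCn{\ell}$ inverse function theorem at the origin, which is more elementary and entirely sufficient since the statement only asks for \emph{some} open connected neighborhood $\mathcal{V}$; the paper's heavier machinery buys a diffeomorphism on the potentially larger set where the Jacobian stays invertible, while yours buys brevity and avoids the properness verification. One shared caveat: the theorem asserts the homeomorphism property for \emph{any} open neighborhood $\mathcal{N}$ contained in the domain of $h$, whereas both your argument and the paper's only deliver injectivity on sufficiently small neighborhoods of the origin; this is a looseness in the statement rather than a gap specific to your proof.
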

    \begin{proof}
    Let $T_h$ be a composition operator with $\xCn{k}$ stationary symbol which locally asymptotically stabilizes \eqref{control sys}.  Then, via our assumption in Criterion~\Rref{uniqueness criterion for composition operators}, $T_hf$ is continuous and $\dot{x}=T_hf(x)$ can be locally asymptotically stabilized by means of a composition operator with a continuous stationary symbol--namely, $T_x$. By Theorem~\Rref{coronconjecture}, it follows that $\restr{T_hf}{\mathcal{N}}$ has a continuous stationary right inverse $\alpha:\mathcal{O} \rightarrow \mathcal{N}$ defined on an open neighborhood of the origin $\mathcal{O}$.  Since $\restr{T_hf}{\mathcal{N}} \circ \alpha = \iota$, where $\iota:\mathcal{O}\rightarrow \xR^n$ is the inclusion map, it follows that $\alpha$ is injective.  By the invariance of domain, $\alpha$ is a homeomorphism and has a continuous inverse $\alpha^{-1}:\mathcal{N} \rightarrow \mathcal{O}$.  Hence, we have $\restr{T_hf}{\mathcal{N}} = \restr{T_hf}{\mathcal{N}} \circ \alpha \circ \alpha^{-1} = \iota \circ \alpha^{-1}$, which is certainly an injection.  By another application of the invariance of domain, $\restr{T_hf}{\mathcal{N}}$ is a homeomorphism.
    
    If, in addition, $T_hf$ is $\xCn{\ell}$ and has the origin as a locally exponentially stable equilibrium, then treating $\dot{x}=T_hf(x)$ as a control system with trivial dependence on the control variable, Corollary~\Rref{hautus-zabczyk} implies that $\restr{J_{T_hf}}{0} \in \xGL_n(\xR)$, and the constant rank theorem yields there is a neighborhood of the origin such that $\restr{J_{T_hf}}{x} \in \xGL_n(\xR)$ for each $x$ in this neighborhood.  Take $\mathcal{V}$ to be a connected open neighborhood of the origin (any sufficiently small open ball about the origin will suffice) contained in this region.  Then, since $T_hf$ is $\xCn{\ell}$, $\restr{T_hf}{\mathcal{V}}$ is $\xCn{\ell}$ and Theorem~\Rref{hautus for comp stab implies continuous right inverse} yields that the right inverse $\alpha$ of $\restr{T_hf}{\mathcal{V}}$ can be taken to be $\xCn{\ell}$.  By the above argument, $\restr{T_hf}{\mathcal{V}} = \iota \circ \alpha^{-1}$, so it follows from the fact that $\alpha$ is a homeomorphism that $\restr{T_hf}{\mathcal{V}}$ is closed.  Notice as well that for every $y \in \mathcal{O}$, we have $\restr{T_hf}{\mathcal{V}}^{-1}(y) = (\iota \circ \alpha^{-1})^{-1}(y) = \alpha\big(\iota^{-1}(y)\big) = \alpha(y)$. Similarly, for every $y \in \xR^n \setminus \mathcal{O}$, we have $\restr{T_hf}{\mathcal{V}}^{-1}(y) = \emptyset$.  So, $\restr{T_hf}{\mathcal{V}}$ is closed with compact fibers, and since $\xR^n$ is locally compact and Hausdorff, it follows that $\restr{T_hf}{\mathcal{V}}$ is proper.  Therefore, $\restr{T_hf}{\mathcal{V}}$ has a bijective derivative for each $x \in \mathcal{V}$ and is a proper differentiable map between an open subset $\mathcal{V} \subseteq \xR^n$ and the open simply connected subset $\xR^n\subseteq \xR^n$.  By the Hadamard-Caccioppoli theorem, it then follows that $\restr{T_hf}{\mathcal{V}}$ is a diffeomorphism.
    \end{proof}
    We can use this to great effect in the situations where we seek an exponentially stabilizing feedback law $u(x)$ such that $f\big(\cdot,u(\cdot)\big)$ is, at least, of class $\xCn{1}$.
    \begin{thrm}\label{stabilizable by controls if eigenvalues of right inverse are good}
    For $k \geq 0$ and $\ell \geq \textrm{max}\left\{1,k\right\}$, the system \eqref{control sys} is locally exponentially stabilizable by means of a $\xCn{k}$ stationary feedback law $u$ such that $f\big(\cdot,u(\cdot)\big)$ is $\xCn{\ell}$ if and only if $u = \textrm{proj}_2\circ \alpha \circ (\textrm{proj}_1 \circ \alpha)^{-1}$ for some $\xCn{k}$ stationary right inverse $\alpha$ of $f$ defined on an open neighborhood of the origin such that $\textrm{proj}_1\circ \alpha$ is a $\xCn{\ell}$ diffeomorphism and $\Lambda_+(\restr{J_{\alpha_1}}{0}^{-1}) = \emptyset$.
    \end{thrm}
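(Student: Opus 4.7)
The plan is to route everything through the canonical symbol $h(x) := (x,u(x))$: in the forward direction I would construct the desired section $\alpha$ from a locally exponentially stabilizing feedback $u$ by inverting $T_h f$, and in the reverse direction I would read off $u$ directly from the formula in the statement. The two main tools are the diffeomorphism clause of Theorem~\Rref{system under comp op is homeomorphism} and the classical Lyapunov linearization principle.

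For the forward direction, assume \eqref{control sys} is locally exponentially stabilized by a $\xCn{k}$ stationary feedback $u$ with $f(\cdot,u(\cdot)) \in \xCn{\ell}$, and set $h(x):=(x,u(x))$ so that $T_h f = f(\cdot,u(\cdot)) \in \xCn{\ell}$ and $\dot{x} = T_h f(x)$ is locally exponentially stable. Theorem~\Rref{system under comp op is homeomorphism} then produces an open connected neighborhood $\mathcal{V}$ of the origin on which $g := \restr{T_h f}{\mathcal{V}}$ is a $\xCn{\ell}$ diffeomorphism onto an open neighborhood $\mathcal{O} := g(\mathcal{V})$ of $0$. I would define $\alpha := h \circ g^{-1} \colon \mathcal{O} \to \mathcal{X}\times\mathcal{U}$ and observe that $\alpha$ is $\xCn{k}$ and stationary (since $h \in \xCn{k}$, $g^{-1} \in \xCn{\ell}$ with $\ell \geq k$, and both vanish at $0$), while $f \circ \alpha = (f \circ h) \circ g^{-1} = g \circ g^{-1} = \textrm{id}_{\mathcal{O}}$ identifies $\alpha$ as a $\xCn{k}$ stationary local section of $f$ near the origin. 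Since $\alpha_1 := \textrm{proj}_1 \circ \alpha = g^{-1}$ is a $\xCn{\ell}$ diffeomorphism, the inverse function theorem gives $\restr{J_{\alpha_1}}{0}^{-1} = \restr{J_g}{0}$, and the local exponential stability of $\dot{x} = g(x)$ together with $g \in \xCn{\ell}$, $\ell \geq 1$, forces this matrix to be Hurwitz; hence $\Lambda_+\bigl(\restr{J_{\alpha_1}}{0}^{-1}\bigr) = \emptyset$. Finally, $\textrm{proj}_2 \circ \alpha \circ \alpha_1^{-1} = u \circ g^{-1} \circ g = u$ verifies the prescribed form.

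For the reverse direction, start from such an $\alpha$ on an open neighborhood $\mathcal{O}$ of $0$ and define $u := \textrm{proj}_2 \circ \alpha \circ \alpha_1^{-1}$ on the open neighborhood $\alpha_1(\mathcal{O})$. Smoothness and stationarity of $u$ are immediate from the hypotheses on $\alpha$ and $\ell \geq k$. The key calculation uses $f \circ \alpha = \textrm{id}$: for $x \in \alpha_1(\mathcal{O})$,
$$
f(x,u(x)) = f\bigl(\alpha_1(\alpha_1^{-1}(x)),\,\alpha_2(\alpha_1^{-1}(x))\bigr) = (f \circ \alpha)(\alpha_1^{-1}(x)) = \alpha_1^{-1}(x),
$$
so $f(\cdot,u(\cdot)) = \alpha_1^{-1} \in \xCn{\ell}$. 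The closed-loop system $\dot{x} = \alpha_1^{-1}(x)$ linearizes at $0$ to $\restr{J_{\alpha_1^{-1}}}{0} = \restr{J_{\alpha_1}}{0}^{-1}$, which is Hurwitz by hypothesis, so classical linearization theory delivers local exponential stability of the closed-loop system under the $\xCn{k}$ stationary feedback $u$.

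The only substantive step is the forward direction's appeal to Theorem~\Rref{system under comp op is homeomorphism} to upgrade $\restr{T_h f}{\mathcal{V}}$ to a $\xCn{\ell}$ diffeomorphism; once this is in hand, the construction $\alpha := h \circ g^{-1}$ and the change-of-variables identity $f(x,u(x)) = \alpha_1^{-1}(x)$ together reduce everything to routine bookkeeping combined with standard linearization stability. The only point requiring minor care is the matching of domains when defining $\alpha$ and $u$, which is automatic because $g$ is a diffeomorphism and hence an open map.
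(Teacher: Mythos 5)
Your proposal is correct and follows essentially the same route as the paper: both directions hinge on Theorem~\Rref{system under comp op is homeomorphism} to make the closed-loop map $\restr{T_hf}{\mathcal{V}}$ a $\xCn{\ell}$ diffeomorphism, the section is built as $\alpha = h \circ (\restr{T_hf}{\mathcal{V}})^{-1}$, and the identity $f\bigl(x,u(x)\bigr) = \alpha_1^{-1}(x)$ together with the inverse function theorem reduces the spectral condition to the linearization principle (which the paper phrases via Corollary~\Rref{hautus-zabczyk} and you phrase as the classical Lyapunov linearization theorem --- the same fact). No gaps.
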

    \begin{proof}
    In the forward direction, suppose \eqref{control sys} is locally exponentially stabilizable by the $\xCn{k}$ stationary feedback law $u$ inducing $w(x)=\big(x,u(x)\big)$ such that $T_wf$ is $\xCn{\ell}$ for $\ell \geq \textrm{max}\left\{1,k\right\}$.  By Theorem~\Rref{system under comp op is homeomorphism}, $\restr{T_wf}{\mathcal{N}}$ is a $\xCn{\ell}$ diffeomorphism for some open connected neighborhood of the origin $\mathcal{N}$.  So, $\alpha := w \circ \restr{T_wf}{\mathcal{N}}^{-1}$ is a $\xCn{k}$ stationary right inverse of $f$ on an open neighborhood of the origin and 
    $$
    \alpha_1:= \textrm{proj}_1 \circ \alpha = \textrm{proj}_1 \circ w \circ \restr{T_wf}{\mathcal{N}}^{-1} = \restr{T_wf}{\mathcal{N}}^{-1}
    $$
    is a $\xCn{\ell}$ diffeomorphism.  Observe as well that $(\textrm{proj}_2\circ \alpha \circ \alpha_1^{-1})(x) = u(x)$ is certainly a $\xCn{k}$ stabilizing control for \eqref{control sys}. Treating $\dot{x}=\restr{T_wf}{\mathcal{N}}(x)$ as a control system with trivial dependence on the control variable, Corollary~\Rref{hautus-zabczyk} implies that $\Lambda_+\left(\restr{J_{\restr{T_wf}{\mathcal{N}}}}{0}\right) = \emptyset$.  Since $T_wf(0)=0$, the inverse function theorem tells us that 
    $$
    \restr{J_{\alpha_1}}{0} = \restr{J_{\restr{T_wf}{\mathcal{N}}^{-1}}}{T_wf(0)} = \restr{J_{\restr{T_wf}{\mathcal{N}}}}{0}^{-1}.
    $$
    Hence, we conclude that $\Lambda_+ \left(\restr{J_{\alpha_1}}{0}^{-1} \right) = \emptyset$.
    
    In the reverse direction, suppose $f$ has a $\xCn{k}$ stationary right inverse $\alpha$ defined on a neighborhood of the origin $\mathcal{O}$ such that $\alpha_1 := \textrm{proj}_1\circ \alpha$ is a $\xCn{\ell}$ diffeomorphism for $\ell \geq \textrm{max}\left\{1,k\right\}$ and $\Lambda_+\left(\restr{J_{\alpha_1}}{0}^{-1} \right) = \emptyset$. Write $F(x) = f\big(x,\textrm{proj}_2 \circ \alpha \circ \alpha_1^{-1}(x)\big)$ and notice that $\alpha_1^{-1} \circ \alpha_1 = F \circ \alpha_1$.  Since $\alpha_1$ is a $\xCn{\ell}$ diffeomorphism, it must certainly be surjective, and as surjections are the epimorphisms in the category of sets, it follows that $F(x)= \alpha_1^{-1}$.  So, $F$ is a $\xCn{\ell}$ diffeomorphism as well.  Since $\restr{J_F}{\alpha_1(0)}=\restr{J_F}{0}=\restr{J_{\alpha_1}}{0}^{-1}$ by the inverse function theorem and the stationarity of $\alpha$, the local exponential stability of $\dot{x}=F(x)$ follows from $\Lambda_+\left(\restr{J_{\alpha_1}}{0}^{-1}\right) = \emptyset$.  Clearly, this yields that $u=\textrm{proj}_2 \circ \alpha \circ (\textrm{proj}_1 \circ \alpha)^{-1}$ is a $\xCn{k}$ stationary feedback law locally exponentially stabilizing \eqref{control sys}.
    \end{proof}
    The case for asymptotic and exponential stabilizability when $f\big(\cdot,u(\cdot)\big)$ is only $\xCn{0}$ follows similarly:
    \begin{thrm}\label{stabilizable by controls if right inverse stable}
    The system \eqref{control sys} is locally asymptotically (resp. exponentially) stabilizable by means of continuous stationary feedback laws $u$ if and only if $u = \textrm{proj}_2\circ \alpha \circ (\textrm{proj}_1 \circ \alpha)^{-1}$ for some local section $\alpha$ of $f$ defined on an open neighborhood of the origin such that $\alpha_1 := \textrm{proj}_1\circ \alpha$ is a homeomorphism and $\dot{x}=\alpha_1^{-1}(x)$ has the origin as a locally asymptotically (resp. exponentially) stable equilibrium.
    \end{thrm}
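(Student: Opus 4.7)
The plan is to mirror the structure of Theorem~\Rref{stabilizable by controls if eigenvalues of right inverse are good}, but to work in the purely topological (as opposed to differentiable) setting by invoking only the first assertion of Theorem~\Rref{system under comp op is homeomorphism}. The key structural observation, already visible in the smooth case, is that when a symbol has the feedback form $w(x) = (x,u(x))$, the first-coordinate projection $\textrm{proj}_1 \circ w$ is the identity; consequently $\alpha_1 := \textrm{proj}_1 \circ \alpha$ will invert the closed-loop vector field, so that ``stability of $\alpha_1^{-1}$'' and ``stability of the closed loop'' become the same statement. The whole argument turns on this identification.

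For the forward direction, I would assume a continuous stationary feedback $u$ locally asymptotically (resp.\ exponentially) stabilizes \eqref{control sys}, set $w(x) := (x,u(x))$, and note that the composition operator $T_w$ then stabilizes \eqref{control sys} with a continuous stationary symbol. The first part of Theorem~\Rref{system under comp op is homeomorphism} therefore produces an open neighborhood $\mathcal{N}$ of the origin on which $\restr{T_wf}{\mathcal{N}}$ is a homeomorphism. Define $\alpha := w \circ \restr{T_wf}{\mathcal{N}}^{-1}$. A one-line computation gives $f \circ \alpha = T_wf \circ \restr{T_wf}{\mathcal{N}}^{-1} = \textrm{id}$, so $\alpha$ is a stationary local section of $f$; because $\textrm{proj}_1 \circ w = \textrm{id}$, one also obtains $\alpha_1 = \restr{T_wf}{\mathcal{N}}^{-1}$, so $\alpha_1$ is a homeomorphism and $\alpha_1^{-1}$ equals the closed-loop vector field $f(\cdot,u(\cdot))$, which is (asymptotically or exponentially) stable by hypothesis. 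Finally $u = \textrm{proj}_2 \circ \alpha \circ \alpha_1^{-1}$ follows by substituting $\alpha_1^{-1} = \restr{T_wf}{\mathcal{N}}$ into the definition of $\alpha$ and cancelling.

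For the reverse direction, starting from a stationary local section $\alpha$ whose $\alpha_1$ is a homeomorphism and for which $\dot{x} = \alpha_1^{-1}(x)$ is locally asymptotically (resp.\ exponentially) stable, I would set $u := \textrm{proj}_2 \circ \alpha \circ \alpha_1^{-1}$; continuity and stationarity of $u$ are immediate from the hypotheses. The closed-loop vector field $F(x) := f(x,u(x))$ must then coincide with $\alpha_1^{-1}$: for each $y$ in the domain of $\alpha$, applying the section identity $f \circ \alpha = \textrm{id}$ at $y$ gives $f(\alpha_1(y), \textrm{proj}_2 \circ \alpha(y)) = y$, which reads as $F(\alpha_1(y)) = y = \alpha_1^{-1}(\alpha_1(y))$, and the surjectivity of $\alpha_1$ onto a neighborhood of the origin promotes this to $F \equiv \alpha_1^{-1}$ on that neighborhood. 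Stability of the closed loop and the uniqueness of trajectories demanded by Criterion~\Rref{uniqueness criterion} then transfer verbatim from the assumed stability of $\dot{x} = \alpha_1^{-1}(x)$.

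The only genuine difficulty lies in the forward direction: extracting a \emph{continuous} local right inverse of $f$ from a merely continuous (not $\xCone$) stabilizing feedback. This is precisely the content already packaged in Theorem~\Rref{system under comp op is homeomorphism}, which in turn rests on Theorem~\Rref{coronconjecture} and hence on the Kumagai--Jittorntrum implicit function theorem together with invariance of domain; once $\restr{T_wf}{\mathcal{N}}$ is known to be a homeomorphism, everything else in the proof is algebraic bookkeeping. A minor point to verify is that no separate argument is required to handle the asymptotic and exponential cases uniformly: both directions use the hypothesized stability of $\dot{x} = \alpha_1^{-1}(x)$ (resp.\ of $\dot{x} = f(x,u(x))$) as a black box, and the two systems are literally equal on the relevant neighborhood, so the exponential case is inherited without extra work.
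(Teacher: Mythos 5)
Your proposal is correct and follows essentially the same route as the paper: the forward direction builds $\alpha = w \circ \restr{T_wf}{\mathcal{N}}^{-1}$ from the homeomorphism supplied by Theorem~\Rref{system under comp op is homeomorphism}, and the reverse direction uses the section identity plus surjectivity of $\alpha_1$ to identify the closed-loop field with $\alpha_1^{-1}$. Your explicit remark that $\alpha_1^{-1}$ \emph{is} the closed-loop vector field (so its stability is inherited directly) is a small but welcome clarification that the paper leaves implicit in the forward direction.
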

    \begin{proof}
    In the forward direction, suppose \eqref{control sys} is locally asymptotically (resp. exponentially) stabilizable by a continuous stationary feedback law $u$.  Correspondingly, this induces the stabilizing composition operator $T_w$ with $w(x)=\big(x,u(x)\big)$.  By Theorem~\Rref{system under comp op is homeomorphism}, $\restr{T_wf}{\mathcal{N}}$ is a homeomorphism for some open connected neighborhood of the origin $\mathcal{N}$.  So, $\alpha := w \circ \restr{T_wf}{\mathcal{N}}^{-1}$ is a continuous stationary right inverse of $f$ on an open neighborhood of the origin and, as before, $\alpha_1 = \restr{T_wf}{\mathcal{N}}^{-1}$ is a homeomorphism.  Observe as well that $(\textrm{proj}_2\circ \alpha \circ \alpha_1^{-1})(x) = u(x)$ is certainly a continuous stationary feedback law stabilizing \eqref{control sys}.
    
    In the reverse direction, suppose $f$ has a continuous stationary right inverse $\alpha$ defined on a neighborhood of the origin $\mathcal{O}$ such that $\alpha_1 := \textrm{proj}_1\circ \alpha$ is a homeomorphism and $\dot{x}=\alpha_1^{-1}(x)$ has the origin as a locally asymptotically (resp. exponentially) stable equilibrium.  Write $F(x) = f\big(x,\textrm{proj}_2 \circ \alpha \circ \alpha_1^{-1}(x)\big)$ and notice that $\alpha_1^{-1} \circ \alpha_1 = F \circ \alpha_1$.  Since $\alpha_1$ is a homeomorphism, it must certainly be surjective, and as surjections are the epimorphisms in the category of sets, it follows that $F(x)= \alpha_1^{-1}$.  So, the local exponential stability of $\dot{x}=F(x)$ follows from the local  asymptotic (resp. exponential) stability of $\dot{x} = \alpha_1^{-1}(x)$.  Clearly, this yields that $u=\textrm{proj}_2 \circ \alpha \circ (\textrm{proj}_1 \circ \alpha)^{-1}$ is a continuous stationary feedback law locally  asymptotically (resp. exponentially) stabilizing \eqref{control sys}.
    \end{proof}
    With the results of Theorem~\Rref{stabilizable by controls if eigenvalues of right inverse are good} and Theorem~\Rref{stabilizable by controls if right inverse stable} in hand, we can answer the question of stabilizability by feedback laws quite nicely.
    \begin{thrm}\label{stabilizable so that infinitely differentiable characterization}
     For $k \geq 0$, the system \eqref{control sys} is locally asymptotically (resp. exponentially) stabilizable by means of a $\xCn{k}$ stationary feedback law $u$ if and only if there exists a $\xCn{k}$ stationary local section $\alpha$ of $f$ near the origin such that $\alpha_1 := \textrm{proj}_1\circ \alpha$ is a homeomorphism (resp. diffeomorphism) and $\dot{x}=\alpha_1^{-1}(x)$ is locally asymptotically (resp. exponentially) stable.  Moreover, every such control $u$ is of the form $u =\textrm{proj}_2\circ \alpha \circ (\textrm{proj}_1 \circ \alpha)^{-1}$ for some $\alpha$ satisfying the above.
    \end{thrm}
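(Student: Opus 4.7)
The plan is to obtain this theorem by consolidating the two preceding results: Theorem~\ref{stabilizable by controls if right inverse stable} furnishes the asymptotic case (and the $k=0$ exponential case), while Theorem~\ref{stabilizable by controls if eigenvalues of right inverse are good} covers the exponential case for $k \geq 1$. The main new content lies in packaging these equivalences under the common formulation ``$\dot{x}=\alpha_1^{-1}(x)$ is locally asymptotically (resp.\ exponentially) stable,'' so that the ``moreover'' clause reduces to a direct read-off of the explicit formula produced in the forward direction of each case.

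For the asymptotic case I would follow the construction used in Theorem~\ref{stabilizable by controls if right inverse stable}. Given a $\xCn{k}$ stationary stabilizing feedback $u$, set $h(x) := (x, u(x))$, and apply Theorem~\ref{system under comp op is homeomorphism} to conclude that $\restr{T_hf}{\mathcal{N}} = \restr{f(\cdot, u(\cdot))}{\mathcal{N}}$ is a homeomorphism onto a neighborhood of the origin for some open $\mathcal{N}$; then define $\alpha := h \circ \restr{T_hf}{\mathcal{N}}^{-1}$. A direct check shows that $f \circ \alpha = \textrm{id}$, that $\alpha_1 = \restr{T_hf}{\mathcal{N}}^{-1}$ is a homeomorphism, and that $\alpha_1^{-1} = \restr{T_hf}{\mathcal{N}}$ is the closed-loop vector field, which is locally asymptotically stable by hypothesis. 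In the reverse direction, I would set $u := \textrm{proj}_2 \circ \alpha \circ \alpha_1^{-1}$; the identity $f(x, u(x)) = \alpha_1^{-1}(x)$ is immediate from $f \circ \alpha = \textrm{id}$, so asymptotic stability of the closed loop follows from that of $\dot{x}=\alpha_1^{-1}(x)$.

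For the exponential case with $k \geq 1$ I would reduce to Theorem~\ref{stabilizable by controls if eigenvalues of right inverse are good}. The essential bridge is that $\alpha_1$ being a $\xCn{k}$ diffeomorphism means $\alpha_1^{-1}$ is itself $\xCn{k}$, and then linearization of the closed-loop system at the origin together with the inverse function theorem identity $\restr{J_{\alpha_1^{-1}}}{0} = \restr{J_{\alpha_1}}{0}^{-1}$ converts the eigenvalue hypothesis $\Lambda_+\bigl(\restr{J_{\alpha_1}}{0}^{-1}\bigr) = \emptyset$ of the earlier theorem into precisely the local exponential stability of $\dot{x}=\alpha_1^{-1}(x)$, and vice versa. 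The auxiliary hypothesis $f(\cdot, u(\cdot)) \in \xCn{\ell}$ appearing in Theorem~\ref{stabilizable by controls if eigenvalues of right inverse are good} is automatic in the forward direction with $\ell = k$ by the chain rule.

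The main obstacle to watch is the matching of regularity classes across the equivalence: producing a $\xCn{k}$ feedback from a section $\alpha$ via $u = \textrm{proj}_2 \circ \alpha \circ \alpha_1^{-1}$ requires $\alpha_1^{-1}$ to be as regular as $\alpha$ itself, which is exactly why the exponential case demands $\alpha_1$ be a diffeomorphism rather than merely a homeomorphism; it is the second half of Theorem~\ref{system under comp op is homeomorphism} that guarantees this diffeomorphism property exists in the forward direction. Once the pairing between the regularity of $\alpha_1$ and that of $u$ is kept aligned across both directions, the theorem falls out as a bookkeeping assembly of the previously established composition-operator machinery, with the ``moreover'' clause provided by the explicit formula $u = \textrm{proj}_2 \circ \alpha \circ (\textrm{proj}_1 \circ \alpha)^{-1}$ obtained in the forward construction.
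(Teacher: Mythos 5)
Your proposal matches the paper's treatment exactly: the paper offers no separate proof of this theorem, presenting it as the direct consolidation of Theorem~\ref{stabilizable by controls if right inverse stable} (asymptotic/continuous case) and Theorem~\ref{stabilizable by controls if eigenvalues of right inverse are good} (exponential, $k\geq 1$ case), with the ``moreover'' clause read off from the explicit formula $u=\textrm{proj}_2\circ\alpha\circ(\textrm{proj}_1\circ\alpha)^{-1}$ produced in those proofs. Your constructions of $\alpha := h\circ\restr{T_hf}{\mathcal{N}}^{-1}$ via Theorem~\ref{system under comp op is homeomorphism} and the linearization bridge via $\restr{J_{\alpha_1^{-1}}}{0}=\restr{J_{\alpha_1}}{0}^{-1}$ are precisely the arguments in the paper's proofs of those two preceding theorems.
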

    Broadly speaking, Theorem~\Rref{stabilizable so that infinitely differentiable characterization} ties a nice bow on the question of stabilizability in a very easy to understand way.  Speaking colloquially and in clear terms, here is a summary of what we have learned:
    \begin{itemize}
    \item Exponential stabilizability by a composition operator with $\xCn{k}$ stationary symbols is possible if and only if we locally have a $\xCn{k}$ stationary local section near the origin.  More simply, when we can change \textit{every argument} of the dynamics, stabilization at an equilibrium is possible if and only if we have a tool to \textit{change the dynamics at will}.
    \item Stabilizability by $\xCn{k}$ stationary feedback laws is possible if and only if we locally have a $\xCn{k}$ stationary local section near the origin which is invertible in the non-control variables and whose inverse induces a stable system.  More simply, when we impose that we may change \textit{only some arguments} of the dynamics, stabilization at an equilibrium is possible if and only if we have two things: (1) a tool to \textit{change the dynamics at will} and (2) a second tool to undo the effects of the first tool on the arguments we cannot change that, itself, \textit{stabilizes those unchanged arguments}.
    \end{itemize}
    Let us conclude with an example to drive home this theme.
    \begin{xmpl}\label{lastexample}
    Consider the system \eqref{control sys} where $f:\xR^2 \times \xR \rightarrow \xR^2$ is defined by:
    $$
        f(x,u) = \begin{bmatrix} 
            x_1^2 + x_2^2 + x_2 \\
            x_1x_2+x_2^2+u^3
        \end{bmatrix}
    $$
    We begin by noting that Corollary~\Rref{hautus-zabczyk} yields, by routine computation, that this system fails to be exponentially stabilizable by $\xCone$ stationary feedback laws.  It does, however, turn out to be exponentially stabilizable by $\xCzero$ stationary feedback laws, in a manner analogous to Example~\Rref{ex 1 chapter 3}.  We will employ Theorem~\Rref{stabilizable so that infinitely differentiable characterization} to this end.
    
    We will construct a right inverse for $f$.  To do so, let $\alpha_1,\alpha_2,\alpha_3:\xR^2 \rightarrow \xR$ be the (as-of-yet, unspecified) component functions of this sought right inverse $\alpha:\xR^2 \rightarrow \xR^2 \times \xR$.  When constructing $\alpha$, we must be careful to do so in such a way so as to satisfy the constraints of Theorem~\ref{stabilizable so that infinitely differentiable characterization}--namely,  $\textrm{proj}_1\circ \alpha$ must be a diffeomorphism defined on some neighborhood of the origin and $\dot{x}=(\textrm{proj}_1\circ \alpha)^{-1}(x)$ must be locally exponentially stable.   
    
    To this end, we start in a straightforward fashion and observe that the first component of $f$ yields that $\alpha_1$ and $\alpha_2$ must satisfy $\alpha_1^2+\alpha_2^2+\alpha_2 = x_1$.  Using the quadratic formula, it is easy to see that this equation will be satisfied if 
    $$
    \alpha_2=\frac{1}{2}\left(-1+\sqrt{1-4(\alpha_1^2-x_1)}\right).
    $$
    Hence, if $\textrm{proj}_1\circ\alpha$ is to be invertible, then this inverse function must satisfy $\alpha_1\big((\textrm{proj}_1\circ \alpha)^{-1} \big) = x_1$, as well as:
    \begin{align*}
        \alpha_2\big((\textrm{proj}_1\circ \alpha)^{-1} \big) &= x_2 \\ \frac{1}{2}\left(-1+\sqrt{1-4\big(x_1^2-(\textrm{proj}_1\circ \alpha)^{-1}_1\big)}\right) &= x_2 \\
        (\textrm{proj}_1\circ \alpha)^{-1}_1 &= x_1^2+x_2^2+x_2.
    \end{align*}
    Applying this information, note that the linearization of $\dot{x}=(\textrm{proj}_1\circ \alpha)^{-1}(x)$ would then be of the form
    $$
        \restr{J_{(\textrm{proj}_1 \circ \alpha)^{-1}}}{(0,0)} = \begin{bmatrix}
        0 &1 \\
        \frac{\partial (\textrm{proj}_1 \circ \alpha)^{-1}_2}{\partial x_1} & \frac{\partial (\textrm{proj}_1 \circ \alpha)^{-1}_2}{\partial x_2} 
        \end{bmatrix}.
    $$
    So, let us take $(\textrm{proj}_1 \circ \alpha)^{-1}_2 = -2x_2-\frac{1}{2}x_1$, producing eigenvalues of $\lambda = 1/2 (-2 \pm \sqrt{2}) < 0$.  Correspondingly, $(\textrm{proj}_1 \circ \alpha)^{-1}$ is now well-defined and the system $\dot{x}=(\textrm{proj}_1 \circ \alpha)^{-1}(x)$ is locally exponentially stable.  Via an application of the inverse function theorem, we reach $\textrm{proj}_1 \circ \alpha$ as the inverse of this function near the origin, thus specifying $\alpha_1$ and $\alpha_2$.
    
    To conclude, we complete our construction of $\alpha$ by solving $\alpha_1\alpha_2+\alpha_2^2+\alpha_3^3=x_2$ for $\alpha_3$, producing $\alpha_3 = \sqrt[3]{x_2-\alpha_1\alpha_2-\alpha_2^2}$.  By Theorem~\Rref{stabilizable so that infinitely differentiable characterization}, the $\xCzero$ stationary feedback law
    
    $$
        u(x) = \sqrt[3]{(\textrm{proj}_1 \circ \alpha)^{-1}_2-x_1x_2-x_2^2} = \sqrt[3]{-2x_2-\frac{1}{2}x_1-x_1x_2-x_2^2}
    $$
    
    \noindent is a stabilizing control. To confirm this, observe that $
    f\big(x,u(x)\big) = \begin{bmatrix}
    x_1^2+x_2^2+x_2 \\
    -2x_2-\frac{1}{2}x_1
    \end{bmatrix}.
    $ Linearizing the right-hand side, we produce $
    \restr{J_{f\circ (\textrm{id},u)}}{(0,0)} = \begin{bmatrix} 
        0 & 1 \\
        -\frac{1}{2} & -2
    \end{bmatrix},
    $
    the eigenvalues of which are given by $\lambda = 1/2 (-2 \pm \sqrt{2}) < 0$ and are, as they should be, the same as the eigenvalues of the linearization of $\dot{x}=(\textrm{proj}_1 \circ \alpha)^{-1}(x)$.
    
    \end{xmpl}
    Before finishing, it is worth providing a some words here regarding the use of Theorem~\Rref{stabilizable so that infinitely differentiable characterization} in practice.  In particular, not all examples will work out as nicely as the one above.  Indeed, even with the characterization given by Theorem~\Rref{stabilizable so that infinitely differentiable characterization}, explicit constructions of stabilizing controls is still, quite often, very challenging.  In large part, this is due to the fact that the local sections for the vector field $f$ in \eqref{control sys}, should they exist, might not be possible to express in a closed form.  In some situations where this occurs, this is not necessarily a problem--as illustrated by the technique employed in Example~\Rref{lastexample} above, notice that we did not need to actually compute $\textrm{proj}_1 \circ \alpha$ directly to obtain our stabilizing feedback law.  So, the actual computation involved in constructing a stabilizing feedback law can, in some particular circumstances, be done without the explicit construction of a local section (allowing us to occasionally overcome the aforementioned difficulty).  Unfortunately, such tricks do not always resolve similar issues that may arise. Even if a local section of $f$ can be expressed in a closed form, this does not guarantee that $(\textrm{proj}_1 \circ \alpha)^{-1}$ enjoys the same property.  Similar problems persist in the reverse as well--it may happen that $(\textrm{proj}_1 \circ \alpha)^{-1}$ is expressible in a closed form while $\textrm{proj}_2 \circ \alpha$ is not.  More confusingly, a stabilizing control may be expressible in closed form even when there exists no local section $\alpha$ such that both $(\textrm{proj}_1 \circ \alpha)^{-1}$ and $\textrm{proj}_2\circ \alpha$ share the same property, since the composing two functions that cannot be expressed in closed form does not guarantee that the same holds for the result.  More frustratingly, it is important to note that we cannot simply employ some approximation technique in computing the control and ignore this challenge, as Theorem~\Rref{stabilizable so that infinitely differentiable characterization} tells us that \textit{every possible stabilizing feedback} must be of the form $u =\textrm{proj}_2\circ \alpha \circ (\textrm{proj}_1 \circ \alpha)^{-1}$ for some local section $\alpha$ satsisfying the conditions of the theorem.  As such, one should expect to occasionally encounter systems which are stabilizable, but whose stabilizing controls are beyond our practical reach.
\section{Conclusion and Future Work}\label{conclusion section chapter 3}
    In this paper, we have demonstrated that many of the most well-known classical theorems regarding stabilizability have somewhat cleaner forms in the composition operator context.  We also, hopefully, were able to convince the reader that extracting properties about the composition operators which stabilize a given system \eqref{control sys} tends to be, in general, relatively easy (at least, in comparison with extracting information about stabilizing feedback control laws).  By employing these composition-stabilizability results in the classical feedback-stabilizability context, we have shown that the \textit{stabilizability} of a control systems is equivalent to the \textit{stability} of an associated system.  That is, we reduce the question of \textit{stabilizability} to that of \textit{stability}.

    There is a lot of potential for further work characterizing stabilizability via composition operators.  In particular, completing the distinction between asymptotic and exponential stabilization via composition operators--that is, closing the gap between Theorem~\Rref{hautus for comp stab implies continuous right inverse} and Theorem~\Rref{coronconjecture}--could be a worthwhile endeavor.  Additionally, conjecture~\Rref{brockett's sufficient conjecture} would be interesting to investigate further.
        
    From the characterization of stabilizing controls and stabilizable systems, there seem to be a variety of potential avenues for further study.  For example, Coron's result from \cite{coron2} give us a stronger necessary condition than Brockett's theorem by establishing a relationship between the existence of continuous feedback laws and the existence of an isomorphism between certain singular homology (or, equivalently, stable homotopy) groups associated to the system. Refining arguments along homological lines could prove fruitful, in light of the characterizations provided by Theorem~\Rref{stabilizable so that infinitely differentiable characterization}.  Additionally, since stability can be completely characterized via Lyapunov functions (as Zubov's theorem shows; see \cite{zubov1964methods}), investigations of Lyapunov-theoretic connections in the context of the results presented here could provide a bridge between the existence of local sections and the existence of Lyapunov functions.
        
    It also would be worth pursuing variational-analytic connections. Namely, extending the established characterizations of local asymptotic and exponential stabilizability to the case of \textit{set-valued} differential inclusions could shed some light on the topic of stabilizability in a more general sense (particularly, by removing the uniqueness criteria imposed throughout this paper). As mentioned and exploited in \cite{gjkm,christopherson2019}, variational analysis achieves complete characterizations of linear openness (see, e.g., \cite{m18} and the references therein), a notion which might potential to serve as the counterpart to Brockett's openness property from Definition~\Rref{open} for general nonsmooth mappings and multifunctions (indeed, Brockett's property has already been investigated in this context to some degree \cite{ryan1994brockett}). So, much of the groundwork for further developments has already been completed. The real challenge would be implementing the classical, known results regarding feedback stabilizability (or, better, stabilizability via composition operators) in this much wilder context.



\begin{thebibliography}{99}
\bibitem{artstein} Z. Artstein, Stabilization with relaxed controls. \textit{Nonlinear Anal.} \textbf{7} (1983), 1163--1173.
\bibitem{piccoli}  U. Boscain and B. Piccoli, \textit{Optimal Syntheses for Control Systems on 2-D Manifolds}. Springer (2004).
\bibitem{brockett} R. W. Brockett, Asymptotic stability and feedback stabilization. \textit{Differential Geometric Control Theory} \textbf{27} (1983), 181–-191.
\bibitem{byrnes} C. I. Byrnes, On Brockett’s necessary condition for stabilizability and the topology of Lyapunov functions on $\mathbb{R}^N$. \textit{Commun. Inf. Syst.} \textbf{8} (2008), 333--352.
\bibitem{christopherson2019} B. A. Christopherson, F. Jafari and B.S. Mordukhovich, A variational approach  to  local  asymptotic  and  exponential stabilization of nonlinear systems. \textit{SN Operations Research Forum} (2020), DOI: 10.1007/s43069-020-0003-z.
\bibitem{coron2} J. M. Coron, A necessary condition for feedback stabilization. \textit{Syst. Control Lett.} \textbf{14} (1990), 227--232.
\bibitem{coron1} J. M. Coron, \textit{Control and Nonlinearity}. American Mathematical Society (2007).
\bibitem{eilenberg2015foundations} S. Eilenberg and N. Steenrod, \textit{Foundations of Algebraic Topology}. Princeton University Press (2015).
\bibitem{ryan1994brockett} E.P Ryan, On Brockett’s condition for smooth stabilizability and its necessity in a context of nonsmooth feedback, \textit{SIAM J. Cont. and Opt.} \textbf{32} (1994) 1597--1604.
\bibitem{gjkm} R. Gupta, F. Jafari, R. J. Kipka and B. S. Mordukhovich, Linear openness and feedback stabilization of nonlinear control systems, \textit{Disc. Cont. Dyn. Systs. Ser. S}  \textbf{11} (2018), 1103--1119. 
\bibitem{hatcher2005algebraic} A. Hatcher, \textit{Algebraic Topology}. Cambridge University Press (2002).
\bibitem{hermes}  H. Hermes, Asymptotically stabilizing feedback controls and the nonlinear regulator problem. \textit{SIAM J. Control Optim.} \textbf{29} (1991), 185--196.
\bibitem{jammazi} C. Jammazi, M. Zaghdoudi, M. Boutayeb, On the global polynomial stabilization of nonlinear dynamical systems, \textit{Nonlinear Analysis: Real World Applications} \textbf{46} (2019): 29-42.
\bibitem{jittorntrum} K. Jittorntrum, An implicit function theorem.  \textit{J. Optim. Theory Appl.} \textbf{25} (1978), 575--577.
\bibitem{stackexchangepost}P. Kivimae (https://math.stackexchange.com/users/47926/pax\%20\%20kivimae), \textit{There  is  no  continuous  injective  function $f:\mathbb{R}^n\rightarrow \mathbb{R}^m$}. Mathematics Stack Exchange, URL:https://math.stackexchange.com/q/2011427 (version:  2016-11-13).
\bibitem{kumagai} S. Kumagai, An  implicit  function  theorem: Comment. \textit{J. Optim.  Theory Appl.} \textbf{31} (1980), 285--288.
\bibitem{m18} B. S. Mordukhovich, \textit{Variational Analysis and Applications}. Springer (2018).
\bibitem{onishchenko1} S. M. Onishchenko, To the problem of the nonlinear systems stabilizability. \textit{J Automation and Inf. Sci.} \textbf{43.1} (2011), 1--12.
\bibitem{onishchenko2} S. M. Onishchenko, Analysis of the Conditions of Controllability and Stabilizability of Nonlinear Dynamical Systems.  \textit{J. Automation and Inf. Sci.} \textbf{43.5} (2011), 10--22.
\bibitem{reitberger20011} H. Reitberger, Abteilung-Vietoris-Beglesches Abbildungstheorem, Vietoris-Lefschetz-Eilenberg-Montgomery-Beglescher Fixpunktsatz und Wirtschaftsnobelpreise.  \textit{Jahresbericht der Deutschen Mathematiker Vereinigung} \textbf{103} (2001), 67.
\bibitem{reitberger2002leopold} H. Reitberger, Leopold Vietoris (1891-2002). \textit{Notices Amer. Math. Soc.} \textbf{49} (2002), 1232--1236.
\bibitem{nlab:saturated_subset} U. Schreiber,  M. S. New, T. Bartels \textit{Saturated Subset}.  nLab, URL: http://ncatlab.org/nlab/show/saturated+subset (version: 2017-05-21).
\bibitem{sontag1990feedback} E. D. Sontag, Feedback  stabilization  of  nonlinear  systems, in \textit{Robust  Control  of  Linear  Systems  and  Nonlinear  Control}, pp.\ 61--81. Birkh\"auser (1990).
\bibitem{sontag2} E. D. Sontag, \textit{Mathematical Control Theory: Deterministic Finite Dimensional Systems}. Springer (1998).
\bibitem{sontag1} H. J. Sussmann, E. D. Sontag and D. Y. Yang, A general result on the stabilization of linear systems using bounded controls. \textit{IEEE Trans. Automat. Control} \textbf{39} (1994), 2411--2425.
\bibitem{sontag0} E. D. Sontag, Stability and stabilization: discontinuities and the effect of disturbances. Nonlinear analysis, differential equations and control.  Springer (1999), 551--598.
\bibitem{willard2004general} S. Willard, \textit{General Topology}. Courier Corporation (2004).
\bibitem{zab} J. Zabczyk, Some comments on stabilizability. \textit{J. Appl. Math. Optim.} \textbf{19} (1989), 1--9. 
\bibitem{zubov1964methods} V. I. Zubov, \textit{Methods of A.M. Lyapunov and Their Applications}. P. Noordhoff (1964).
\end{thebibliography}
\end{document}